\documentclass[a4paper]{article}
\pdfoutput=1
\usepackage{mathrsfs}
\usepackage{}
\topmargin -30pt \textheight 21 true cm \textwidth 15  true cm
\oddsidemargin 0pt \evensidemargin 0pt

\usepackage{amsfonts}
\usepackage{mflogo,texnames}
\usepackage{amsmath,amssymb,amsthm}
\usepackage{color}
\usepackage[colorlinks=true]{hyperref}

\numberwithin{equation}{section}

\newtheorem{theorem}{Theorem}[section]
\newtheorem{lemma}{Lemma}[section]
\newtheorem{proposition}{Proposition}[section]
\newtheorem{definition}{Definition}[section]
\newtheorem{corollary}{Corollary}[section]
\newtheorem{remark}{Remark}[section]


\begin{document}

\title{\textbf{New lower bounds on the radius of spatial analyticity for the KdV equation}}

\bigskip

\author{
$^a$Jianhua Huang \quad
$^{ab}$Ming Wang\thanks{Corresponding author. \emph{Email: mwangcug@outlook.com(M. Wang), jhhuang32@nudt.edu.cn(J. Huang)}
$^a$College of Science, National University of Defense Technology, Changsha, 410073, PR China; $^b$School of Mathematics and Physics, China University of Geosciences, Wuhan, 430074,  China.}
}

\maketitle

\begin{abstract}
The radius of spatial analyticity for solutions of the KdV equation is studied. It is shown that the analyticity radius does not decay faster than $t^{-1/4}$ as time $t$ goes to infinity. This improves the works [Selberg, da Silva, Lower bounds on the radius of spatial analyticity for the KdV equation, Annales Henri Poincar\'{e}, 2017, 18(3): 1009-1023] and [Tesfahun, Asymptotic lower bound for the radius of spatial analtyicity to solutions of KdV equation, arXiv preprint arXiv:1707.07810, 2017]. Our strategy mainly relies on a higher order almost conservation law in Gevrey spaces, which is inspired by the $I-$method.
\end{abstract}

\medskip

\textbf{Keywords:}  KdV equation; Radius of spatial analyticity; $I-$method. \\

\textbf{AMS subject classifications: 35Q53; 35L30.}

\section{Introduction}
In this paper, we are concerned with the Cauchy problem for the Korteweg-de Vries (KdV) equation
\begin{align}\label{equ-1}
u_t+u_{xxx}+uu_x=0, \quad t,x\in \mathbb{R}, \quad \quad u(0,x)=u_0(x).
\end{align}
Here, the unknown function $u(t,x)$ and the datum $u_0(x)$ are real-valued. The KdV equation models the unidirectional propagation of small-amplitude long waves in nonlinear dispersive systems. The ill-posedness and well-posedness of the KdV equation in Sobolev spaces $H^s$ have been extensively studied. For instance, Christ,  Colliander  and  Tao \cite{Tao-ajm} showed that the equation \eqref{equ-1} is ill-posed in $H^s(\mathbb{R})$ for $s<-\frac{3}{4}$. Kenig,  Ponce  and  Vega \cite{KPV96} proved the local well-posedness in $H^s(\mathbb{R})$ for $s>-\frac{3}{4}$. With the same range of $s$, the global well-posedness were obtained by Colliander, Keel, Staffilani, Takaoka and Tao in \cite{Tao}. In the critical case $s=-\frac{3}{4}$, the KdV equation is globally well-posed. This is shown by Guo\cite{guo} and Kishimoto\cite{Kis}  independently.

The linear KdV equation, also called the Airy equation, does not have a global smoothing effect. Precisely, it is only expected that $e^{-t\partial_x^3}u_0 (t\neq0)$ belongs to $H^s(\mathbb{R})$ for a general datum $u_0$ belonging to $H^s(\mathbb{R})$. Thus, in principle, the solution of \eqref{equ-1} belongs to at most $H^s(\mathbb{R})$ in general if $u_0$ belongs to $H^s(\mathbb{R})$. But some interesting things happen if  some further restrictions are imposed on the datum. In fact, Kato and Ogawa \cite{KO} showed that if the datum $u_0$ belongs to $H^{s}(\mathbb{R})(s>-\frac{3}{4})$ and satisfies
$$
\sum_{k=0}^\infty\frac{A_0^k}{k!}\|(x\partial_x)^ku_0\|_{H^s}<\infty
$$
for some positive constant $A_0$, then the solution of \eqref{equ-1} is  analytic in both space and time variable. As a direct corollary, if $u_0$ is the Dirac measure at the origin, then the solution of \eqref{equ-1} is analytic. Moreover, Tarama \cite{T04} proved the following result: If $u_0$ belongs to $L^2(\mathbb{R})$ and satisfies
$$
\int_{-\infty}^{\infty}(1+|x|)|u_0(x)|\,\mathrm dx+\int_{0}^{\infty}e^{\delta |x|^{\frac{1}{2}}}|u_0(x)| \,\mathrm dx<\infty
$$
for some positive constant $\delta$, then the solution of \eqref{equ-1} is analytic in spatial variable $x$ for any $t>0$. Tarama's result implies that, roughly speaking, the rapid decay of the datum implies the spatial analyticity of the solution for the KdV equation. The phenomenon was investigated by Rubkin \cite{R13} in a more general framework. It is proved in \cite{R13} that, if $u_0=\mathcal {O}(e^{-c|x|^\alpha})(x\rightarrow +\infty)$ and satisfies some other slight restrictions, then the solution of \eqref{equ-1} is
\begin{description}
  \item[(a)] analytic in $x$ on the whole plane when $\alpha>\frac{1}{2}$,
  \item[(b)] analytic in a strip around the real line when $\alpha=\frac{1}{2}$,
  \item[(c)] Gevrey-regular if $\alpha<\frac{1}{2}$.
\end{description}
Motivated by these works, it is interesting to study the well-posedness for the KdV equation in analytic function spaces.

A nice choice of the analytic function space is the Gevrey space $G^\sigma(\mathbb{R})(\sigma>0)$, consisting of functions such that
$$
\|f\|_{G^\sigma(\mathbb{R})}:=\|e^{\sigma|\xi|}\widehat{f}(\xi)\|_{L^2(\mathbb{R})}<\infty,
$$
where $\widehat{f}(\xi)$ denotes the Fourier transform of $f$. In fact, according to the Paley-Wiener Theorem (see e.g. \cite{Ka}), a function belongs to $G^\sigma$ if and only if it can be extended to an analytic function on the strip
$$
S_\sigma:=\{z\in \mathbb{C}: |\textmd{Im } z|<\sigma\}.
$$
The local well-posedness of the KdV equation in $G^\sigma$ has been studied by several mathematicians.  Gruji\'{c} and Kalisch \cite{GK} showed that, if the datum $u_0$ belongs to $G^{\sigma_0}$ for some $\sigma_0>0$, then the KdV equation \eqref{equ-1} has a unique solution $u\in C[-T,T; G^{\sigma_0}]$ with a lifespan $T$ depending on $\|u_0\|_{G^{\sigma_0}}$. Similar results for the periodic KdV equation are proved by Hannah,  Himonas and Petronilho \cite{H11, H12} and Li \cite{L}. The work by Gruji\'{c} and Kalisch \cite{GK} improved the earlier results of  Hayashi \cite{H91, H-duke}, where the analyticity radius $\sigma(t)$ of local solution may depend on $t$. The local well-posedness in \cite{GHH, GK, H11, H12, L} shows that, for short times, the KdV equation persists the uniform radius of spatial analyticity as time progresses.

Now we turn to the global well-posedness. In Sobolev spaces $H^s$, as mentioned above, the study on the global well-posedness of the KdV equation is more or less complete. However, in analytic function spaces, the global well-posedness of the KdV equation is still open, mainly due to the lack of conversation law. In other words, it is not known whether $u(t)\in G^{\sigma_0}$ for all $t>0$ if $u_0\in G^{\sigma_0}$,  where $u(t)$ is the solution of the KdV equation \eqref{equ-1}. But one can ask the following question instead: For what kind of function $\sigma(t)$ such that $u(t)$ belongs to $G^{\sigma(t)}$ for all $t>0$\footnote{By the embedding $G^\sigma \hookrightarrow G^{\sigma'}$ for $\sigma>\sigma'$, the function $\sigma(t)$ is necessarily less or equal to $\sigma_0$.}?

In the sequel, we recall some progresses on the problem. With the aid of Liapunov functions with a parameter, Kato and Masuda showed \cite[Theorem 2, p. 459]{KM} that, for every $T>0$ fixed, there exists $r>0$ such that $\sigma(t)\geq r$ for $t\in [0,T]$. In particular, the result implies that the solution of the KdV equation is analytic on some strip  at any time. Bona and Gruji\'{c} gave an explicit lower bound of the uniform radius of analyticity by a Gevrey-class approach. In fact, it is shown \cite[Theorem 11 and Remark 12, p. 355]{BG} that $\sigma(t)\geq e^{-ct^2}$ for large $t$, where $c$ depends on the Gevrey norm of the datum. Later, Bona, Gruji\'{c} and Kalisch improved the exponential decay bound to an algebraic lower bound: $\sigma(t)\geq t^{-12}$ for large $t$, see \cite[Corollary 2, p. 795]{BGK}. More recently, Selberg and Silva \cite{SdS} obtained a further refinement: $\sigma(t)\geq t^{-\frac{4}{3}-\varepsilon}$ for large $t$, where $\varepsilon$ is an arbitrary positive number. The strategy in \cite{SdS} is as follows:
\begin{description}
  \item[(1)] Prove a local well-posedness by contraction mapping principle in $G^\sigma$ with a lifespan $\delta>0$;
  \item[(2)] Establish an almost conservation law in $G^\sigma$, namely\footnote{In fact, by letting $\sigma$ go to $0$ in \eqref{equ-intro-1}, one obtained the $L^2$ conservation law of the KdV equation.}
\begin{align}\label{equ-intro-1}
\|u(\delta)\|^2_{G^\sigma}\leq \|u_0\|^2_{G^\sigma}+ C\sigma^{\frac{3}{4}-\varepsilon}\|u_0\|^2_{G^\sigma};
\end{align}
  \item[(3)] By shrinking $\sigma$ gradually, they used repeatedly the local well-posedness and the almost conservation law on the intervals $[0, \delta], [\delta, 2\delta], \cdots$, and obtained a global bound of solution on $[0,T]$, with $T$ arbitrarily large.
\end{description}
In a paper \cite{T17b} on arxiv, Tesfahun removed the $\varepsilon$ exponent in the conservation law \eqref{equ-intro-1}, via spacetime dyadic bilinear estimates associated with the KdV equation. This leads to the following improvement: $\sigma(t)\geq t^{-\frac{4}{3}}$ for large $t$. In this paper, we are able to show that $\sigma(t)\geq t^{-\frac{1}{4}}$ for large $t$. The precise statement is as follows.

\begin{theorem}\label{thm1}
Let $\sigma_0>0$ and $u_0 \in G^{\sigma_0}$. Then the KdV equation \eqref{equ-1} has a unique smooth solution $u$ such that
$$
u(t)\in G^{\sigma(t)}, \quad t\in \mathbb{R}
$$
with the radius of analyticity $\sigma(t)$ satisfying the lower bound
$$
\sigma(t)\geq c|t|^{-\frac{1}{4}} \quad  \textmd{as } |t|\rightarrow \infty,
$$
where $c$ is a constant depending on $\|u_0\|_{G^{\sigma_0}}$ and $\sigma_0$.
\end{theorem}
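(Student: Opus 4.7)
The plan is to follow the three-step strategy of Selberg--da Silva \cite{SdS}: (1) prove local well-posedness in $G^\sigma$ on a time interval $[0,\delta]$ whose length depends only on $\|u_0\|_{G^\sigma}$; (2) establish an almost conservation law of the form
\begin{equation*}
\|u(\delta)\|_{G^\sigma}^2 \;\leq\; \|u_0\|_{G^\sigma}^2 \,+\, C\sigma^{\alpha}\,P\bigl(\|u_0\|_{G^\sigma}\bigr)
\end{equation*}
for some polynomial $P$ and as large an exponent $\alpha$ as possible; and (3) iterate (2) on successive intervals $[k\delta,(k+1)\delta]$ together with a bootstrap on $\|u(t)\|_{G^\sigma}$. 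A standard accounting shows that after $T/\delta$ iterations the excess in the Gevrey norm is of order $T\sigma^{\alpha}$, so the bootstrap closes provided $\sigma\gtrsim T^{-1/\alpha}$. The works \cite{SdS,T17b} both effectively reach $\alpha=3/4$ (yielding $t^{-4/3}$), so to reach $t^{-1/4}$ my target must be $\alpha=4$.

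Step (1) is essentially contained in Gruji\'c--Kalisch \cite{GK}: their fixed-point argument in a Bourgain $X^{s,b}$ space carries over directly after inserting the exponential weight $e^{\sigma|\xi|}$ and invoking the Kenig--Ponce--Vega bilinear estimate.

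The core of the proof is step (2), for which I would adapt the higher-order $I$-method to the Gevrey setting. Let $I$ be the Fourier multiplier with symbol $e^{\sigma|\xi|}$, so that $\|Iu\|_{L^2}=\|u\|_{G^\sigma}$. A direct symmetrization yields
\begin{equation*}
\frac{d}{dt}\|Iu\|_{L^2}^2 \;=\; c\!\int_{\xi_1+\xi_2+\xi_3=0}\! M_3(\xi_1,\xi_2,\xi_3)\,\widehat u(\xi_1)\widehat u(\xi_2)\widehat u(\xi_3),\qquad M_3=\sum_{j=1}^{3}\xi_j\, e^{2\sigma|\xi_j|},
\end{equation*}
a trilinear form whose symbol vanishes at $\sigma=0$ (this is where the $L^2$-conservation hides) and whose first-order Taylor expansion in $\sigma$ gives the $\sigma^{3/4}$-gain of \cite{SdS,T17b}. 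To push further, I would introduce trilinear and quadrilinear corrections $\Lambda_3(u),\Lambda_4(u)$ and work with the modified energy $E(u)=\|Iu\|_{L^2}^2+\Lambda_3(u)+\Lambda_4(u)$. Exploiting the KdV resonance identity $\xi_1^3+\xi_2^3+\xi_3^3=3\xi_1\xi_2\xi_3$ on $\xi_1+\xi_2+\xi_3=0$, one can choose the symbol of $\Lambda_3$ so that the Airy part of its time derivative exactly cancels $M_3$; after substituting $u_t=-u_{xxx}-uu_x$, what remains is a quadrilinear term whose leading symbol is in turn cancelled by tuning $\Lambda_4$. The resulting 5-linear remainder should admit the bound $\sigma^{4}\,P(\|u\|_{G^\sigma})$ on the local interval via dyadic frequency decomposition and the $X^{s,b}$ trilinear theory, which is the almost conservation law of step (2) with $\alpha=4$.

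The main obstacle will be the multilinear analysis behind these corrections. First, the symbols of $\Lambda_3,\Lambda_4$ are built by dividing the ``bad'' multipliers by the resonant denominator $\xi_1\xi_2\xi_3$, which degenerates whenever one frequency vanishes; extracting the full $\sigma^{4}$ gain requires a careful balance between the Taylor expansion of $e^{2\sigma|\xi|}$ and the dispersive denominators. Second, one must verify the coercivity bound $|\Lambda_j(u)|\lesssim\sigma^{\beta_j}\|u\|_{G^\sigma}^{j}$ with $\beta_j>0$, so that $E(u)$ stays comparable to $\|u\|_{G^\sigma}^{2}$ and the step-(3) iteration closes on the original norm rather than on $E$. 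Unlike the smoothing multiplier used in the Sobolev $I$-method, the Gevrey weight $e^{\sigma|\xi|}$ \emph{grows} at high frequencies, so the Taylor remainder at order $k$ carries $k$ extra factors of $\sigma|\xi|$, and the full power of the resonance cancellation will be needed to absorb them into the desired $\sigma^{4}$ gain.
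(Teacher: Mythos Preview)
Your plan matches the paper's approach: the same three-step iteration, the same target exponent $\alpha=4$, and the same hierarchy of modified energies $E_I^2\to E_I^3\to E_I^4$ built by dividing successive symbols by the resonant factors $\alpha_3,\alpha_4$. Two technical points where the paper's execution differs from your sketch are worth flagging. First, the paper takes the $I$-symbol to be $m(\xi)=\cosh(\sigma\xi)$ rather than $e^{\sigma|\xi|}$; the two norms are equivalent, but $\cosh$ is entire and even, so $m^2$ has a genuine power-series expansion $\sum_{k\ge 0}(2k)!^{-1}(\sigma\xi)^{2k}$ in the variable $\xi$, and it is precisely this polynomial structure that drives the explicit cancellation lemmas (Lemmas~\ref{lem-decomp-1}--\ref{lem-decomp-2}) showing that $\Omega_1/(\xi_1\xi_2\xi_3\xi_4)$ and $\Omega_2/\alpha_4$ are polynomials on the hyperplane, hence that $\beta_4$ is regular with the pointwise bound $|\beta_4|\lesssim\sigma^4 e^{\sigma\sum|\xi_i|}$. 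With the kinked weight $e^{\sigma|\xi|}$ this algebraic route is blocked. Second, the coercivity is not obtained in the form $|\Lambda_j(u)|\lesssim\sigma^{\beta_j}\|u\|_{G^\sigma}^j$ that you propose: a bound $|\beta_4|\lesssim\sigma^4 e^{\sigma\sum|\xi_i|}$ alone does not control $\Lambda_4$ by powers of $\|u\|_{G^\sigma}$, since the resulting $4$-form carries no frequency decay with which to close an $L^2$-based estimate. Instead the paper proves a second pointwise bound (Lemma~\ref{lem-beta4-3}) with factors $\langle\xi_{p_1}\rangle^{-1}\langle\xi_{p_2}\rangle^{-1}$, uses Sobolev embedding to get $|\Lambda_j(u)|\lesssim\|Iu\|_{L^2}^j$ with no $\sigma$-gain, and then \emph{rescales} the solution so that $\|Iu_0\|_{L^2}\ll 1$. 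The smallness making $E_I^4$ comparable to $E_I^2$ therefore comes from the data via scaling, not from $\sigma$; this scaling step is missing from your outline and is essential for the iteration to close.
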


The proof of Theorem \ref{thm1} is based on a simple observation: If one can prove the almost conservation law
\begin{align}\label{equ-intro-2}
\|u(\delta)\|^2_{G^\sigma}\leq \|u_0\|^2_{G^\sigma}+ C(\|u_0\|^2_{G^\sigma})\sigma^{\alpha}
\end{align}
with a larger $\alpha>0$, then one obtains a better lower bound of $\sigma(t)$ by the strategy in \cite{SdS}. To this end, inspired by the $I-$method in \cite{Tao}, we define the modified energies $E^2_I(t), \cdots, E^4_I(t)$ (see Section 2 for definitions) in Gevrey space $G^\sigma$, and prove that
\begin{align}\label{equ-intro-3}
E^4_I(t)\textmd{ is comparable with }E^2_I(t)\textmd{ for all }t\in \mathbb{R}\textmd{ when }E^2_I\textmd{ is small},
\end{align}
\begin{align}\label{equ-intro-4}
|E^4_I(\delta)-E^4_I(0)|\leq  C\|u_0\|^5_{G^\sigma}\sigma^4.
\end{align}
Combining \eqref{equ-intro-3} and \eqref{equ-intro-4} we find that \eqref{equ-intro-2}  holds with $\alpha=4$ for small $\|u_0\|_{G^\sigma}$. The smallness can be removed by a scaling. In a word,  this leads a better lower bound $\sigma(t)\geq c|t|^{-\frac{1}{4}}$.

We do not believe that the lower bound in Theorem \ref{thm1} is optimal. In fact, it is probably can be improved by introducing further modified energies $E^5_I(t), \cdots$ in the scheme as in \cite{Tao}.


Finally, we mention some references devoted to the uniform radius of analyticity for other partial differential equations. We refer the readers to \cite{BGK10, HKS, ST17} for generalized KdV equations, to \cite{BGK06, CAN, T17a} for Schr\"{o}dinger equations, to \cite{KV09, KV11-a, KV11-b} for Euler equations,  to \cite{GT, P, ST15} for Klein-Gordon equations, and to \cite{GGT} for the cubic Szeg\H{o} equation.

\section{Preliminaries}

\subsection{Local well posedness}
First, we introduce some function spaces used in this paper. For $s, b \in \mathbb{R}$, we use $X^{s,b}(\mathbb{R}^2)$ to denote the Bourgain space defined by the norm
$$
\|f\|_{X^{s,b}(\mathbb{R}^2)}:=\|(1+|\xi|)^s(1+|\tau-\xi^3|)^b\widehat{f}(\xi,\tau)\|_{L^2(\mathbb{R}^2)},
$$
where $\widehat{f}(\xi,\tau)$ denotes the space-time Fourier transform of $f(x,t)$:
$$
\widehat{f}(\xi,\tau) = \int_{\mathbb{R}^2}e^{-i(t\tau+x\xi)}f(x,t)\,\mathrm dx\mathrm dt.
$$
Replacing $(1+|\xi|)^s$ by $e^{\sigma|\xi|}$ in the norm of the Bourgain space,  we obtain a Gevrey type Bourgain space $G^{\sigma,b}(\mathbb{R}^2)$ defined by the norm
$$
\|f\|_{G^{\sigma,s,b}(\mathbb{R}^2)}:=\|e^{\sigma|\xi|}(1+|\tau-\xi^3|)^b\widehat{f}(\xi,\tau)\|_{L^2(\mathbb{R}^2)}.
$$
Also, for $\delta>0$, we use $X^{s,b}_\delta, G^{\sigma,b}_\delta$ to denote the restrictions of $X^{s,b}$ and $G^{\sigma,b}$ to $\mathbb{R}\times (-\delta, \delta)$, respectively. More precisely,  $X^{s,b}_\delta$ and $ G^{\sigma,b}_\delta$ are defined by the norms as follows:
$$
\|f\|_{X^{s,b}_\delta} = \inf \{\|g\|_{X^{s,b}}: g=f \textmd{ on } \mathbb{R}\times (-\delta, \delta)\},
$$
$$
\|f\|_{G^{s,b}_\delta} = \inf \{\|g\|_{G^{s,b}}: g=f \textmd{ on } \mathbb{R}\times (-\delta, \delta)\}.
$$

Next, we give a local well posedness result for the KdV equation.

According to Corollary 2.7 in \cite{KPV96}, for $b'\in (\frac{1}{2}, \frac{3}{4}]$ and $b\in (\frac{1}{2},b']$, there exists a positive constant $c=c(b,b')$ such that
\begin{align}\label{equ-bi-1}
\|\partial_x(uv)\|_{X^{0,b'-1}}\leq c\|u\|_{X^{0,b}}\|v\|_{X^{0,b}}.
\end{align}
Using the obvious inequality $e^{\sigma|\xi|}\leq e^{\sigma|\xi_1|}e^{\sigma|\xi_2|}$, $\xi=\xi_1+\xi_2$, we deduce from \eqref{equ-bi-1} that with the same range of $b$ and $b'$
\begin{align}\label{equ-bi-2}
\|\partial_x(uv)\|_{G^{\sigma,b'-1}}\leq c\|u\|_{G^{\sigma,b}}\|v\|_{G^{\sigma,b}}.
\end{align}
Applying the bilinear estimate \eqref{equ-bi-2} with $b'=\frac{3}{4}$, and using the contraction mapping principle (or following the proof of \cite[Theorem 1]{SdS}), we obtain the following result.

\begin{proposition}[local well posedness]\label{prop-loc}
Let $\sigma>0$ and $b\in (\frac{1}{2}, \frac{3}{4})$. Then, for any $u_0\in G^\sigma(\mathbb{R})$, there exists a time $\delta >0$ given by
\begin{align}\label{equ-life}
\delta = \frac{c_0}{(1+\|u_0\|_{G^\sigma})^{\frac{1}{\frac{3}{4}-b}}}
\end{align}
and a unique solution $u$ of \eqref{equ-1} such that
\begin{align}\label{equ-local-bound}
\|u\|_{G^{\sigma,b}_\delta}\leq C\|u_0\|_{G^\sigma},
\end{align}
where the constants $C, c_0$ depend only on $b$.
Moreover, the solution map $u_0 \mapsto u(t)$ is continuous from $G^\sigma$ to $G^\sigma$ for every $t\in [-\delta,\delta]$.
\end{proposition}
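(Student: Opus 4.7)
The plan is to run the standard Bourgain-type contraction mapping argument on the Duhamel form of \eqref{equ-1},
$$
u(t) = e^{-t\partial_x^3} u_0 - \frac{1}{2}\int_0^t e^{-(t-s)\partial_x^3}\partial_x\bigl(u(s)^2\bigr)\,\mathrm ds,
$$
adapted to the Gevrey--Bourgain space $G^{\sigma,b}$. First I would introduce a smooth cutoff $\psi\in C_c^\infty(\mathbb R)$ equal to $1$ on $[-1,1]$ and supported in $[-2,2]$, and consider the map
$$
\Phi(u)(t) := \psi(t/\delta)\,e^{-t\partial_x^3} u_0 - \tfrac{1}{2}\psi(t/\delta)\int_0^t e^{-(t-s)\partial_x^3}\partial_x\bigl(u(s)^2\bigr)\,\mathrm ds
$$
on $G^{\sigma,b}(\mathbb R^2)$; a fixed point, restricted to $(-\delta,\delta)$, gives a solution in $G^{\sigma,b}_\delta$.

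The second step is to import the classical linear estimates from $X^{s,b}$ theory to the Gevrey--Bourgain setting. Since $e^{\sigma|\xi|}$ is a Fourier-side multiplier that commutes with both the propagator $e^{-t\partial_x^3}$ and every time-localization operator, the standard estimates of Kenig--Ponce--Vega (used also in \cite{SdS}) transfer verbatim, yielding for $b\in(1/2,3/4)$ and $b'=3/4$
\begin{align*}
\|\psi(t/\delta)\,e^{-t\partial_x^3} f\|_{G^{\sigma,b}} &\leq C\|f\|_{G^\sigma},\\
\Big\|\psi(t/\delta)\int_0^t e^{-(t-s)\partial_x^3} F(s)\,\mathrm ds\Big\|_{G^{\sigma,b}} &\leq C\,\delta^{\,b'-b}\|F\|_{G^{\sigma,\,b'-1}},
\end{align*}
the Duhamel gain $\delta^{b'-b}=\delta^{3/4-b}$ being genuinely positive because $b<3/4$.

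Combining these with the bilinear inequality \eqref{equ-bi-2} applied to $u\cdot u$ and to $u\cdot v-v\cdot v$ produces
$$
\|\Phi(u)\|_{G^{\sigma,b}}\le C\|u_0\|_{G^\sigma}+C\delta^{\,3/4-b}\|u\|_{G^{\sigma,b}}^2,
$$
together with the parallel Lipschitz bound
$$
\|\Phi(u)-\Phi(v)\|_{G^{\sigma,b}}\le C\delta^{\,3/4-b}\bigl(\|u\|_{G^{\sigma,b}}+\|v\|_{G^{\sigma,b}}\bigr)\|u-v\|_{G^{\sigma,b}}.
$$
Setting $R:=2C\|u_0\|_{G^\sigma}$ and choosing $\delta$ so that $C\delta^{3/4-b}(1+\|u_0\|_{G^\sigma})\le\tfrac{1}{4}$ gives
$$
\delta\sim\frac{c_0}{(1+\|u_0\|_{G^\sigma})^{1/(3/4-b)}},
$$
matching \eqref{equ-life}. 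Then $\Phi$ is a contraction on the closed ball of radius $R$ in $G^{\sigma,b}$, and its unique fixed point yields a solution satisfying \eqref{equ-local-bound}. Continuity of the map $u_0\mapsto u(t):G^\sigma\to G^\sigma$ for each $t\in[-\delta,\delta]$ is a standard consequence of the Lipschitz dependence of this fixed point on the datum, combined with the embedding $G^{\sigma,b}_\delta\hookrightarrow C([-\delta,\delta];G^\sigma)$ valid for $b>1/2$.

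Because the key bilinear estimate \eqref{equ-bi-2} is already in hand, there is no genuine analytic obstacle here; the main point demanding care is bookkeeping of the $\delta$-exponent, namely that the Duhamel gain $\delta^{3/4-b}$ exactly matches what is needed to absorb the quadratic nonlinearity and thereby yield the lifespan \eqref{equ-life}. A technical subtlety worth flagging is that producing a positive power of $\delta$ from the Duhamel integral forces the strict inequality $b<3/4$, which is precisely why Proposition \ref{prop-loc} excludes the endpoint $b=3/4$ at which the bilinear estimate itself is applied.
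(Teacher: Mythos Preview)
Your proposal is correct and follows exactly the route the paper indicates: the paper does not give a detailed proof but simply says that the result follows by applying the bilinear estimate \eqref{equ-bi-2} with $b'=\tfrac{3}{4}$ and the contraction mapping principle (or by following the proof of \cite[Theorem~1]{SdS}), which is precisely the argument you have written out. Your added details (the cutoff Duhamel map, the linear estimates transferred via the multiplier $e^{\sigma|\xi|}$, and the bookkeeping of the $\delta^{3/4-b}$ gain) are the standard way to make that sketch rigorous.
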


Finally, we state a multi-linear estimates will be used later.

\begin{lemma}\label{lem1}
Let $\sigma\geq 0, \delta>0, -\frac{1}{2}<b'<-\frac{1}{3}$ and $b>\frac{1}{2}$. Let $|D|$ be the Fourier multiplier with symbol $|\xi|$. Then there exists a constant $C=C(b,b')$ such that
\begin{align}\label{equ-mult-0}
\left\||D|\prod_{i=1}^4u_i\right\|_{X_\delta^{0,b'}}\leq C \prod_{i=1}^4\|u_i\|_{X_\delta^{0,b}}.
\end{align}
\end{lemma}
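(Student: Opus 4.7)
My plan is to establish this quadrilinear estimate by duality and a dyadic Littlewood--Paley decomposition, using Strichartz and bilinear estimates for the Airy propagator, in the spirit of the multilinear $I$-method analysis of \cite{Tao}.

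First, using the infimum definition of the restriction norm, I would extend each $u_i$ to $\mathbb{R}^2$ with $X^{0,b}$-norm comparable to $\|u_i\|_{X_\delta^{0,b}}$, thereby reducing to the corresponding inequality on the full space $\mathbb{R}^2$. Dualizing, the claim becomes
\begin{equation*}
\left|\iint_{\mathbb{R}^2}|D|\!\left(\textstyle\prod_{i=1}^4u_i\right)v\,dxdt\right|\leq C\prod_{i=1}^4\|u_i\|_{X^{0,b}}\|v\|_{X^{0,-b'}},
\end{equation*}
with $b>\tfrac{1}{2}$ and $-b'\in(\tfrac{1}{3},\tfrac{1}{2})$. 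I would then Littlewood--Paley decompose each factor, $u_i=\sum_{N_i}P_{N_i}u_i$ and $v=\sum_{N_5}P_{N_5}v$ with $N_j\in 2^{\mathbb{N}}$, and by the $4$-fold symmetry assume $N_1\geq N_2\geq N_3\geq N_4$. The Fourier convolution constraint forces $N_5\lesssim N_1$, so the multiplier $|D|$ contributes a frequency factor at most $N_1$.

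The key ingredients on each dyadic piece would be: the $L^8_{t,x}$ Airy Strichartz estimate $\|u\|_{L^8_{t,x}}\lesssim\|u\|_{X^{0,b}}$ for $b>\tfrac{1}{2}$, obtained from the Airy Strichartz inequality and the transfer principle; the Kenig--Ponce--Vega bilinear refinement
\begin{equation*}
\|P_Nu\cdot P_Mv\|_{L^2_{t,x}}\lesssim (\min(N,M)/\max(N,M))^{1/2}\|u\|_{X^{0,b}}\|v\|_{X^{0,b}},
\end{equation*}
which gives a decisive $N_1^{-1/2}$ gain when the two paired frequencies are well separated; and the interpolation embedding $X^{0,-b'}\hookrightarrow L^4_{t,x}$ for $-b'>\tfrac{1}{3}$, obtained by interpolating $L^2_{t,x}=X^{0,0}$ with $X^{0,1/2+}\hookrightarrow L^8_{t,x}$. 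Distributing the five factors via H\"older in spacetime, pairing an extreme high--low pair such as $(u_1,u_4)$ through the bilinear estimate and placing the remaining factors in $L^8_{t,x}$ or $L^4_{t,x}$, the loss $N_1$ from $|D|$ is absorbed by the bilinear gain. A careful case analysis on the relative sizes of the $N_j$ (including the comparable-frequency regime $N_1\sim N_2$, in which the bilinear pair is shifted to $(u_1,u_3)$ or $(u_2,u_4)$) produces a dyadic bound with a surplus $N_1^{-\varepsilon}$ for some $\varepsilon>0$, and Schur's test (or Cauchy--Schwarz in dyadic parameters) then yields convergence of the sum.

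The principal obstacle is absorbing the derivative $|D|$ against four factors of only $X^{0,b}$-regularity: neither the bilinear gain nor the negative $b'$ suffices in isolation. The two endpoints of the allowed range have a clean interpretation---the upper bound $b'<-\tfrac{1}{3}$ is precisely what makes the dual embedding $X^{0,-b'}\hookrightarrow L^4_{t,x}$ available and closes H\"older against the $L^8_{t,x}$ Strichartz, while the lower bound $b'>-\tfrac{1}{2}$ supplies enough modulation weight on $v$ to control the resonant configurations in which the bilinear estimate degenerates. Verifying uniform convergence of the Schur summation across all frequency configurations is the technical heart of the proof.
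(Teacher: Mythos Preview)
The paper's own proof is a two-line affair: it quotes Gr\"unrock's quadrilinear estimate \cite[Theorem~1]{G},
\[
\Bigl\|\partial_x\!\prod_{i=1}^4 u_i\Bigr\|_{X^{0,b'}}\le C\prod_{i=1}^4\|u_i\|_{X^{0,b}}
\qquad(-\tfrac12<b'<-\tfrac13,\ b>\tfrac12),
\]
observes that $\|\partial_x w\|_{X^{0,b'}}=\||D|w\|_{X^{0,b'}}$ by Plancherel, and then passes to the restriction norms $X^{0,b}_\delta$ in a standard way. Your proposal, by contrast, sets out to re-prove Gr\"unrock's theorem from scratch via duality, Littlewood--Paley decomposition, Strichartz and bilinear estimates. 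That is a legitimate but much longer route, and it is essentially the strategy of \cite{G} itself.

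There is, however, a genuine gap in your outline. The bilinear refinement you invoke,
\[
\|P_N u\cdot P_M v\|_{L^2_{t,x}}\lesssim\bigl(\min(N,M)/\max(N,M)\bigr)^{1/2}\|u\|_{X^{0,b}}\|v\|_{X^{0,b}},
\]
is the Schr\"odinger bilinear estimate, not the Airy one. For the Airy propagator the Jacobian in the bilinear $L^2$ computation is $3|\xi_1+\xi_2|\,|\xi_1-\xi_2|$, and the resulting gain is substantially stronger (roughly $N_{\max}^{-1}$ in the well-separated regime, or in Gr\"unrock's formulation $\|I^{1/2}(u,v)\|_{L^2}\lesssim\|u\|_{X^{0,b}}\|v\|_{X^{0,b}}$ with the bilinear multiplier $|\xi_1\pm\xi_2|^{1/2}$). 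With only the $(\min/\max)^{1/2}$ gain your numerology does not close: pairing $(u_1,u_4)$ bilinearly, placing $u_2,u_3$ in $L^8_{t,x}$ and $v$ in $L^4_{t,x}$, the derivative loss $N_1$ is offset only by $N_1^{-1/2}$, leaving a divergent factor $N_1^{1/2}$ rather than the claimed surplus $N_1^{-\varepsilon}$. To make your scheme work you need the correct Airy-specific bilinear input and the accompanying modulation (resonance) analysis that Gr\"unrock carries out; once those are in place, the rest of your outline is sound.
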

\begin{proof}
Gr\"{u}nrock  \cite[Theorem 1]{G} proved the following multi-linear estimates: If $-\frac{1}{2}<b'<-\frac{1}{3}$, $b>\frac{1}{2}$, then for some $C=C(b,b')$
\begin{align}\label{equ-mult-1}
\left\|\partial_x\prod_{i=1}^4u_i\right\|_{X^{0,b'}}\leq C \prod_{i=1}^4\|u_i\|_{X^{0,b}}.
\end{align}
By Plancherel's theorem, $\|\partial_x\prod_{i=1}^4u_i\|_{X^{0,b'}}=\||D|\prod_{i=1}^4u_i\|_{X^{0,b'}}$. The desired bound \eqref{equ-mult-0} follows from \eqref{equ-mult-1} in a standard way, see e.g. \cite[Corollary 1]{wangm}.
\end{proof}

\subsection{Multi-linear forms for KdV}

In this section, we borrow some known results from \cite{T04} on multi-linear  forms for the KdV equation.

\begin{definition}
A $k$-multiplier is a function $m: \mathbb{R}^k \mapsto \mathbb{C}$. A $k$-multiplier is symmetric if $m(\xi_1,\xi_2,\cdots,\xi_k)=m(\sigma(\xi_1,\xi_2,\cdots,\xi_k))$ for all $\sigma\in S_k$, the group of all permutations on $k$ objects. The symmetrization of a $k-$multiplier is the multiplier
$$
[m]_{sym}(\xi_1,\xi_2,\cdots,\xi_k)=\frac{1}{k!}\sum_{\sigma\in S_k} m(\sigma(\xi_1,\xi_2,\cdots,\xi_k)).
$$
\end{definition}

\begin{definition}
A $k$-multiplier generates a $k$-linear functional or $k$-form acting on $k$ functions $u_1,u_2,\cdots, u_k$,
$$
\Lambda_k(m;u_1,u_2,\cdots, u_k)=\int_{\xi_1+\xi_2+\cdots+\xi_k=0}m(\xi_1,\xi_2,\cdots,\xi_k)\widehat{u_1}(\xi_1)\widehat{u_2}(\xi_2)\cdots\widehat{u_k}(\xi_k).
$$
In particular, if $u_1=u_2=\cdots=u_k=u$ we write $\Lambda_k(m)=\Lambda_k(m;\underbrace{u,u,\cdots,u}_{k \, \, times})$ for brevity.
\end{definition}

If $m$ is symmetric, then $\Lambda_k(m)$ is a symmetric $k$-linear functional. The symmetry is important in the following discussion. To see this, we give a Fourier proof of the fact
\begin{align*}
\int_{\mathbb{R}}u^{k}u_x \,\mathrm dx=0, \quad k\in \mathbb{N}, u\in \mathscr{S}.
\end{align*}
Indeed, by the Plancherel's theorem, we write
\begin{align*}
\int_{\mathbb{R}}u^{k}u_x \,\mathrm dx&=\int_{\mathbb{R}}i\xi_1\widehat{u}(\xi_1)\widehat{u^k}(-\xi_1)\,\mathrm d\xi_1\\
&=\int_{\xi_1+\xi_2+\cdots+\xi_{k+1}=0}i\xi_1\widehat{u}(\xi_1)\widehat{u}(\xi_2)\cdots \widehat{u}(\xi_k)\\
&=\int_{\xi_1+\xi_2+\cdots+\xi_{k+1}=0}i\xi_j\widehat{u}(\xi_1)\widehat{u}(\xi_2)\cdots \widehat{u}(\xi_k)  \,\, \,\,\,\,(j=2,\cdots, k+1)\\
&=\int_{\xi_1+\xi_2+\cdots+\xi_{k+1}=0}i\frac{\xi_1+\xi_2+\cdots+\xi_{k+1}}{k+1}\widehat{u}(\xi_1)\widehat{u}(\xi_2)\cdots \widehat{u}(\xi_k)=0.
\end{align*}

\begin{proposition}\emph{\textbf{\cite[Proposition 1]{T04}}}\label{prop-1}
Suppose $u$ satisfies the KdV equation \eqref{equ-1} and that $m$ is a symmetric $k$-multiplier. Then
\begin{align}\label{equ-modi-0}
\frac{d}{dt}\Lambda_k(m)=\Lambda_k(m\alpha_k)-i\frac{k}{2}\Lambda_k(m(\xi_1,\cdots,\xi_{k-1},\xi_k+\xi_{k+1})\{\xi_k+\xi_{k+1}\}),
\end{align}
where
\begin{align}\label{equ-alpha-k}
\alpha_k=i(\xi_1^3+\cdots+\xi_k^3).
\end{align}
\end{proposition}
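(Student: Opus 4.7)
My plan is to differentiate $\Lambda_k(m;u,\ldots,u)$ under the integral sign, plug in $\widehat{u_t}$ from the KdV equation, and then exploit both the symmetry of $m$ and the fact that the same function $u$ occupies every slot to collapse the resulting sum. The product rule gives
$$
\frac{d}{dt}\Lambda_k(m) \;=\; \sum_{j=1}^{k}\int_{\xi_1+\cdots+\xi_k=0} m(\xi_1,\ldots,\xi_k)\,\widehat{u}(\xi_1)\cdots\widehat{u_t}(\xi_j)\cdots\widehat{u}(\xi_k),
$$
and all subsequent work consists in identifying the two contributions coming from $\widehat{u_t}$.

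I would rewrite the equation as $u_t=-u_{xxx}-\tfrac{1}{2}(u^2)_x$, take the Fourier transform to obtain
$$
\widehat{u_t}(\xi)\;=\;i\xi^3\,\widehat{u}(\xi)\;-\;\tfrac{i\xi}{2}\,\widehat{u^2}(\xi),
$$
and split the above sum accordingly. The linear piece is the easy one: pulling $\sum_{j=1}^k i\xi_j^3$ inside the integral yields $\int m\cdot i(\xi_1^3+\cdots+\xi_k^3)\prod_i \widehat{u}(\xi_i) = \Lambda_k(m\alpha_k)$, exactly the first term in the claimed identity, with no appeal to symmetry of $m$ required.

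For the nonlinear piece I would substitute $\widehat{u^2}(\xi_j)=\int_{\eta_1+\eta_2=\xi_j}\widehat{u}(\eta_1)\widehat{u}(\eta_2)$, which promotes each summand to a $(k+1)$-linear form. After relabeling the two new frequencies as $\xi_j,\xi_{j+1}$, the multiplier of the $j$-th summand is
$$
m(\xi_1,\ldots,\xi_{j-1},\xi_j+\xi_{j+1},\xi_{j+2},\ldots,\xi_{k+1})\cdot(\xi_j+\xi_{j+1}).
$$
Since every slot of the $(k+1)$-form is occupied by the same $u$, permuting the integration variables moves the distinguished pair $(\xi_j,\xi_{j+1})$ to the last two positions, and the symmetry of $m$ then moves the special argument to the $k$-th slot. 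All $k$ summands therefore coincide, and their total equals $-\tfrac{ik}{2}\,\Lambda_{k+1}\bigl(m(\xi_1,\ldots,\xi_{k-1},\xi_k+\xi_{k+1})\{\xi_k+\xi_{k+1}\}\bigr)$, which is the second term in \eqref{equ-modi-0}.

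The analytic content of this argument is minimal—for $u\in\mathscr{S}$ every interchange of $d/dt$ and $\int$ and every Fubini step is routine. The main obstacle is thus purely combinatorial: one must verify carefully that the $k$ nonlinear contributions truly collapse to a single $(k+1)$-form (this is where the symmetry of $m$ and the identical-slot trick are genuinely used) and that the scalar factors $\tfrac{1}{2}$, $i$, and the overall sign coming from $u_t=-u_{xxx}-uu_x$ are correctly balanced against the Fourier signs from $\widehat{\partial_x}= i\xi$ and $\widehat{\partial_x^3}=-i\xi^3$.
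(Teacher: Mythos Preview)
The paper does not supply its own proof of this proposition: it is quoted verbatim from the literature (the CKSTT multilinear-correction scheme) and used as a black box. Your argument---differentiate under the integral, split $\widehat{u_t}=i\xi^3\widehat u-\tfrac{i\xi}{2}\widehat{u^2}$ via the KdV equation, read off $\Lambda_k(m\alpha_k)$ from the linear part, and collapse the $k$ nonlinear summands into a single $(k{+}1)$-form using the symmetry of $m$ together with the diagonal $u_1=\cdots=u_k=u$---is exactly the standard proof and is correct. Note incidentally that the displayed identity in the paper contains a typo: the second term should be a $(k{+}1)$-form $\Lambda_{k+1}(\cdot)$, as you in fact write, not $\Lambda_k$.
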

\begin{remark}\label{rem-modi}
Note that \eqref{equ-modi-0} still holds if the $k+1$-multiplier of the second term is symmetrized.
\end{remark}

Let $m: \mathbb{R} \mapsto \mathbb{R}$ be an arbitrary even $\mathbb{R}$-valued $1$-multiplier. Define the associated operator by
$$
\widehat{If}(\xi)=m(\xi)\widehat{f}(\xi).
$$
Define the modified energy $E^2_I(t)$ by
\begin{align}\label{equ-energy-2}
E^2_I(t)=\|Iu(t)\|^2_{L^2}=\Lambda_2(m(\xi_1)m(\xi_2)).
\end{align}
Then using Proposition \ref{prop-1} and Remark \ref{rem-modi} we find
\begin{align}\label{equ-M3}
\frac{d}{dt}E^2_I(t)=\Lambda_3(M_3), \quad M_3(\xi_1, \xi_2, \xi_3)=-i[m(\xi_1)m(\xi_2+\xi_3)\{\xi_2+\xi_3\}]_{sym}.
\end{align}
Set
\begin{align}\label{equ-energy-3}
E^3_I(t)=E^2_I(t)+\Lambda_3(\sigma_3),\quad \beta_3=-\frac{M_3}{\alpha_3},
\end{align}
then by Proposition \ref{prop-1} and Remark \ref{rem-modi} again, we have
\begin{align}\label{equ-M4}
\frac{d}{dt}E^3_I(t)=\Lambda_4(M_4), \quad M_4(\xi_1, \xi_2, \xi_3,\xi_4)=-i\frac{3}{2}[\beta_3(\xi_1,\xi_2,\xi_3+\xi_4)\{\xi_3+\xi_4\}]_{sym}.
\end{align}
Moreover, defining
\begin{align}\label{equ-energy-4}
E^4_I(t)=E^3_I(t)+\Lambda_4(\sigma_4),\quad \beta_4=-\frac{M_4}{\alpha_4},
\end{align}
we have
\begin{align}\label{equ-beta4-0}
\frac{d}{dt}E^4_I(t)=\Lambda_5(M_5), \quad M_5(\xi_1, \xi_2, \xi_3,\xi_4,\xi_5)=-2i[\beta_4(\xi_1,\xi_2,\xi_3,\xi_4+\xi_5)\{\xi_4+\xi_5\}]_{sym}.
\end{align}

\begin{lemma}\label{lem-M4}
If $m$ is even and $\mathbb{R}-$valued and $M_4$ is given by  \eqref{equ-M4}, then the following identity holds
\begin{equation}\label{equ-M4-0}
\begin{split}
M_4(\xi_1,\xi_2,\xi_3,\xi_4) &= -\frac{c}{108}\frac{\alpha_4}{\xi_1\xi_2\xi_3\xi_4}[m^2(\xi_1)+m^2(\xi_2)+m^2(\xi_3)+m^2(\xi_4)
\\&\qquad \qquad \qquad \quad -m^2(\xi_1+\xi_2)-m^2(\xi_1+\xi_3)-m^2(\xi_1+\xi_4)]\\
&{}\quad+\frac{c}{36}\left\{\frac{m^2(\xi_1)}{\xi_1}+\frac{m^2(\xi_2)}{\xi_2}+\frac{m^2(\xi_3)}{\xi_3}+\frac{m^2(\xi_4)}{\xi_4}\right\},
\end{split}
\end{equation}
where $c$ is an absolute constant, $\alpha_4$ is given by \eqref{equ-alpha-k}.
\end{lemma}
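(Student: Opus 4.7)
The plan is to derive the identity by unwinding the definitions step by step, exploiting the hyperplane constraint at each stage, and then performing the symmetrization combinatorially.

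First I would compute $M_3$ and $\beta_3$ explicitly. Since the $3$-multiplier lives on $\xi_1+\xi_2+\xi_3=0$, we have $\xi_2+\xi_3=-\xi_1$, and Remark \ref{rem-modi} combined with the evenness of $m$ gives the fully symmetric representative
\[
M_3(\xi_1,\xi_2,\xi_3) \;=\; \tfrac{i}{3}\bigl(\xi_1 m(\xi_1)^2 + \xi_2 m(\xi_2)^2 + \xi_3 m(\xi_3)^2\bigr).
\]
Using the classical identity $\xi_1^3+\xi_2^3+\xi_3^3 = 3\xi_1\xi_2\xi_3$ on the hyperplane, so $\alpha_3 = 3i\,\xi_1\xi_2\xi_3$, I would then obtain
\[
\beta_3(\xi_1,\xi_2,\xi_3) \;=\; -\frac{\xi_1 m(\xi_1)^2 + \xi_2 m(\xi_2)^2 + \xi_3 m(\xi_3)^2}{9\,\xi_1\xi_2\xi_3}.
\]

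Next I would substitute $\xi_3 \to \xi_3+\xi_4$ and multiply by $\{\xi_3+\xi_4\}$, exploiting the new hyperplane constraint $\xi_3+\xi_4 = -(\xi_1+\xi_2)$ to cancel the factor $\xi_1+\xi_2$ arising in the denominator. This produces the pre-symmetrized expression
\[
\beta_3(\xi_1,\xi_2,\xi_3+\xi_4)\{\xi_3+\xi_4\} \;=\; -\frac{\xi_1 m(\xi_1)^2 + \xi_2 m(\xi_2)^2 - (\xi_1+\xi_2) m(\xi_1+\xi_2)^2}{9\,\xi_1\xi_2},
\]
so that, by the defining formula \eqref{equ-M4},
\[
M_4 \;=\; \tfrac{i}{6}\Bigl[\tfrac{m(\xi_1)^2}{\xi_2} + \tfrac{m(\xi_2)^2}{\xi_1} - \tfrac{(\xi_1+\xi_2)m(\xi_1+\xi_2)^2}{\xi_1\xi_2}\Bigr]_{sym}.
\]

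I would then perform the symmetrization over $S_4$ by splitting the bracket into two families: the ``diagonal'' terms $m(\xi_i)^2/\xi_j$ with single indices, and the ``off-diagonal'' terms involving pair sums $m(\xi_i+\xi_j)^2$. For the diagonal part, pushing everything over a common denominator $\xi_1\xi_2\xi_3\xi_4$ and summing over all orbits of $S_4$ produces a combination of $m(\xi_i)^2$ multiplied by $\sum_{j\neq i}\prod_{k\neq i,j}\xi_k$; on the hyperplane this elementary symmetric combination collapses, using the identity I would prove separately,
\[
\xi_1^3+\xi_2^3+\xi_3^3+\xi_4^3 \;=\; 3(\xi_1+\xi_2)(\xi_1+\xi_3)(\xi_1+\xi_4),
\]
so that $\alpha_4/(\xi_1\xi_2\xi_3\xi_4)$ appears naturally as a common factor. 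The residual pieces that cannot absorb an $\alpha_4$ assemble into the pure $m(\xi_j)^2/\xi_j$ terms of the second line of \eqref{equ-M4-0}. For the off-diagonal part, I would use evenness of $m$ together with the pairings $\xi_1+\xi_2 = -(\xi_3+\xi_4)$, etc., to reduce the six pair sums to the three appearing in the statement, again bringing out a factor of $\alpha_4/(\xi_1\xi_2\xi_3\xi_4)$.

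The main obstacle is step three: the symmetrization bookkeeping, and in particular showing that after clearing denominators the polynomial expression in the $\xi_j$ factors through $\alpha_4$ in precisely the advertised way. This is a purely algebraic identity on the hyperplane $\xi_1+\xi_2+\xi_3+\xi_4=0$, so once the terms are collected it can be verified by expanding both sides and matching coefficients of the distinct monomials $m(\xi_i)^2$ and $m(\xi_i+\xi_j)^2$; one could also cross-check by specializing to simple choices of the multiplier $m$, e.g.\ $m\equiv 1$ (which forces both sides to vanish, reflecting $L^2$ conservation), to pin down the absolute constant $c$.
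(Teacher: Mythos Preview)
The paper does not supply its own proof of this lemma; it is stated without proof as part of the material ``borrowed'' from the $I$-method literature in Section~2.2. Your outline is the standard derivation and is correct: the explicit forms of $M_3$ and $\beta_3$ you write down agree with what the paper later records in \eqref{equ-comp-6}, and the key simplification in the symmetrization is precisely the identity $\sum_{j}\xi_j^{-1} = \alpha_4/(3\xi_1\xi_2\xi_3\xi_4)$ (equivalently \eqref{equ-alpha4-2}), which splits the ``diagonal'' contribution $\tfrac{1}{6}\sum_i m^2(\xi_i)\sum_{j\neq i}\xi_j^{-1}$ into exactly the two pieces appearing in \eqref{equ-M4-0}. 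For the ``off-diagonal'' piece, after pairing $\{i,j\}$ with its complement you need the factorization $(\xi_1+\xi_2)(\xi_1\xi_2 - \xi_3\xi_4) = -(\xi_1+\xi_2)(\xi_1+\xi_3)(\xi_1+\xi_4)$, i.e.\ \eqref{equ-alpha4-3}, to extract the common factor $\alpha_4/(\xi_1\xi_2\xi_3\xi_4)$.

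One small correction: the sanity check $m\equiv 1$ makes \emph{both} sides of \eqref{equ-M4-0} vanish identically (as you yourself note, by $L^2$ conservation), so it cannot be used to pin down the constant $c$; you would need a nontrivial even $m$ for that. In practice $c$ falls out directly once you track the $i/6$ prefactor through the diagonal computation above.
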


Moreover, it is easy to show that, on the hyperplane $\xi_1+\xi_2+\xi_3+\xi_4=0$,
\begin{align}
\alpha_4&=\xi_1^3+\xi_2^3+\xi_3^3+\xi_4^3\label{equ-alpha4-1}\\
&=3(\xi_1\xi_2\xi_3+\xi_1\xi_2\xi_4+\xi_1\xi_3\xi_4+\xi_2\xi_3\xi_4)\label{equ-alpha4-2}\\
&=3(\xi_1+\xi_2)(\xi_1+\xi_3)(\xi_1+\xi_4)\label{equ-alpha4-3}.
\end{align}

\section{Point-wise bounds for $M_4$ and $\beta_4$ }

The goal of this section is to give some point-wise bounds for multipliers $M_4$ and $\beta_4$, which will play an important role in the proof of almost conversation law. The multiplier $m$ in $M_4$, needed in this paper, is given as follows.

Let $\sigma>0$. Set
\begin{align}\label{equ-m}
m(\xi)=\frac{e^{\sigma \xi}+ e^{-\sigma \xi}}{2}, \quad \xi \in \mathbb{R}.
\end{align}
It is easy to see that
$$
e^{\sigma|\xi|}/2\leq m(\xi)\leq e^{\sigma|\xi|}, \quad \xi \in \mathbb{R},
$$
from which, we find
\begin{align}\label{equ-equivalent}
\|f\|_{G^\sigma}/2\leq \|m(D)f\|_{L^2}\leq \|f\|_{G^\sigma}, \quad f\in \mathscr{S}.
\end{align}
In other words, $\|m(D)\cdot\|_{L^2}$ is an equivalent norm of $\|\cdot\|_{G^\sigma}$.

By Taylor expansion, we have
\begin{align}\label{equ-taylor}
m(\xi)=\sum_{k=0}^{\infty}\frac{(\sigma \xi)^{2k}}{(2k)!}, \quad \xi \in \mathbb{R}.
\end{align}
Using \eqref{equ-taylor}, we deduce from Lemma \ref{lem-M4} that
\begin{equation}\label{equ-M4-1}
\begin{split}
M_4(\xi_1,\xi_2,\xi_3,\xi_4)= &-\frac{c}{108}\frac{\alpha_4}{\xi_1\xi_2\xi_3\xi_4}\sum_{k=0}^{\infty}\frac{\sigma^{2k}}{(2k)!}\Big[\xi_1^{2k}+\xi_2^{2k}+\xi_1^{2k}+\xi_4^{2k}
\\&\qquad \qquad \qquad \quad -(\xi_1+\xi_2)^{2k}-(\xi_1+\xi_3)^{2k}-(\xi_1+\xi_4)^{2k}\Big]\\
&  +\frac{c}{36}\sum_{k=0}^{\infty}\frac{\sigma^{2k}}{(2k)!}\Big(\xi_1^{2k-1}+\xi_2^{2k-1}+\xi_3^{2k-1}+\xi_4^{2k-1}\Big).
\end{split}
\end{equation}
Note that the terms in the sum of \eqref{equ-M4-1} are polynomials of $\xi_1, \xi_2, \xi_3, \xi_4$, which allow us to obtain cancelation conveniently on   the hyperplane
\begin{align}\label{equ-hyper-4}
\xi_1+\xi_2+\xi_3+\xi_4=0.
\end{align}

Before giving a detailed analysis of $M_4$, we first show that the terms on the right hand side of \eqref{equ-M4-1} vanishes on the hyperplane \eqref{equ-hyper-4} if $k=0,1$. In other words, the sums in \eqref{equ-M4-1} are only taking for $k\geq 2$. In fact, in the case $k=0$, using the property \eqref{equ-alpha4-2} of $\alpha_4$, we find
\begin{align*}
RHS \eqref{equ-M4-1}&=-\frac{c}{108}\frac{\alpha_4}{\xi_1\xi_2\xi_3\xi_4} + \frac{c}{36}\Big(\xi_1^{-1}+\xi_2^{-1}+\xi_3^{-1}+\xi_4^{-1}\Big)\\
&=\frac{c}{36}\frac{1}{\xi_1\xi_2\xi_3\xi_4}\Big(\xi_1\xi_2\xi_3+\xi_1\xi_2\xi_4+\xi_1\xi_3\xi_3\xi_4+\xi_2\xi_3\xi_4-\frac{\alpha_4}{3}\Big)=0.
\end{align*}
In the case $k=1$, we have
\begin{align*}
RHS \eqref{equ-M4-1} &= -\frac{c}{108}\frac{\alpha_4}{\xi_1\xi_2\xi_3\xi_4}\frac{\sigma^{2}}{2}\Big[\xi_1^{2}+\xi_2^{2}+\xi_1^{2}+\xi_4^{2} -(\xi_1+\xi_2)^{2}-(\xi_1+\xi_3)^{2}-(\xi_1+\xi_4)^{2}\Big]\\
& \quad +\frac{c}{36}\frac{\sigma^{2}}{2}(\xi_1+\xi_2+\xi_3+\xi_4)\\
&=\frac{\sigma^2}{216}(\xi_1+\xi_2+\xi_3+\xi_4)\Big(3+\frac{\alpha_4}{\xi_2\xi_3\xi_4}\Big)=0.
\end{align*}
Thus, $M_4$ can be rewritten as
\begin{equation}\label{equ-M4-2}
\begin{split}
M_4(\xi_1,\xi_2,\xi_3,\xi_4) = & -\frac{c}{108}\frac{\alpha_4}{\xi_1\xi_2\xi_3\xi_4}\sum_{k=2}^{\infty}\frac{\sigma^{2k}}{(2k)!}\Omega_1(k;\xi_1,\xi_2,\xi_3,\xi_4)\\
&+\frac{c}{36}\sum_{k=1}^{\infty}\frac{\sigma^{2(k+1)}}{(2(k+1))!}\Omega_2(k;\xi_1,\xi_2,\xi_3,\xi_4),
\end{split}
\end{equation}
where $\Omega_1$ and $\Omega_2$ are given by
\begin{align}\label{equ-omega-1}
\Omega_1(k;\xi_1,\xi_2,\xi_3,\xi_4) :=  \xi_1^{2k}+\xi_2^{2k}+\xi_1^{2k}+\xi_4^{2k}-(\xi_1+\xi_2)^{2k}-(\xi_1+\xi_3)^{2k}-(\xi_1+\xi_4)^{2k},
\end{align}
\begin{align}\label{equ-omega-2}
\Omega_2(k;\xi_1,\xi_2,\xi_3,\xi_4) :=\xi_1^{2k+1}+\xi_2^{2k+1}+\xi_3^{2k+1}+\xi_4^{2k+1}.
\end{align}
In order to obtain bounds of $M_4$ and $\beta_4$, according to \eqref{equ-M4-2} and \eqref{equ-energy-4}, we need to control
$$
\frac{\Omega_1}{\xi_1\xi_2\xi_3\xi_4}, \quad \frac{\Omega_2}{\alpha_4}.
$$
At a first glance, there are singularities in the two terms. But this is not the case on the hyperplane $\xi_1+\xi_2+\xi_3+\xi_4=0$. We report the fact in the following two subsections.

%
%

\subsection{Decomposition 1}\label{subsec-3.1}
In this subsection, we get rid of the singularity of $\frac{\Omega_1}{\xi_1\xi_2\xi_3\xi_4}$. To this end, we shall show that for $k\geq 2$
$$
\Omega_1=\xi_1^{2k}+\xi_2^{2k}+\xi_1^{2k}+\xi_4^{2k}-(\xi_1+\xi_2)^{2k}-(\xi_1+\xi_3)^{2k}-(\xi_1+\xi_4)^{2k} = \xi_1\xi_2\xi_3\xi_4 \cdot \textmd{ a polynomial}
$$
on the  hyperplane $\xi_1+\xi_2+\xi_3+\xi_4=0$. This is contained in the following lemma, in which we give a formula of the polynomial.

\begin{lemma}\label{lem-decomp-1}
Assume that $\mathbb{N}\ni k\geq 2$ and $\Omega_1$ is given by \eqref{equ-omega-1}. Then
\begin{equation}\label{equ-decomp1-0}
\begin{split}
\frac{\Omega_1(k)}{\xi_1\xi_2\xi_3\xi_4} = &\sum_{i+j=2k-5}(-1)^i \bigg((\xi_1^{j+1}+\xi_2^{j+1})\sum_{m+l=i}\xi_3^m(\xi_3+\xi_4)^l +(\xi_1^{i+1}-\xi_3^{i+1}) \sum_{m+l=j}\xi_2^m(\xi_2+\xi_4)^l\\
& + (\xi_2^{i+1}-\xi_3^{i+1})\sum_{m+l=j}\xi_1^m(\xi_1+\xi_4)^l +(\xi_1^{i+1}+\xi_2^{i+1})\sum_{\substack{m+l=j}}\xi_4^{m}(\xi_3+\xi_4)^{l}\bigg)\\
&-2 \sum_{i+j=2k-4}\bigg(\xi_1^i(\xi_1+\xi_4)^j+\xi_2^i(\xi_2+\xi_4)^j+ \xi_3^i(\xi_3+\xi_4)^j +\xi_4^{i}(\xi_3+\xi_4)^j\bigg)\\
&  -\sum_{i+j=2k-4}(-1)^i\bigg(\xi_4^{i}  \sum_{\substack{m+l=j}} \xi_1^{m}(\xi_1+\xi_3)^l +\xi_3^{i}  \sum_{\substack{m+l=j}} \xi_4^{m}(\xi_2+\xi_4)^l\bigg)\\
& -\sum_{i+j=2k-4}(-1)^i\bigg(\xi_4^{i}  \sum_{\substack{m+l=j}} (\xi_2^{m}+\xi_3^{m})(\xi_2+\xi_3)^l \bigg).
\end{split}
\end{equation}
\end{lemma}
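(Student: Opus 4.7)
My plan is to derive \eqref{equ-decomp1-0} by a careful iterated application of the elementary telescoping identity
\[
A^{n}-B^{n}=(A-B)\sum_{i+j=n-1}A^i B^j,
\]
working on the hyperplane $\xi_1+\xi_2+\xi_3+\xi_4=0$. Since $2k$ is even, the hyperplane relation gives $(\xi_1+\xi_i)^{2k}=(\xi_j+\xi_\ell)^{2k}$ whenever $\{i,j,\ell\}=\{2,3,4\}$, so each subtracted term in $\Omega_1$ can be presented in whichever of the two equivalent forms is convenient for extracting a given linear factor via the telescoping identity.

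Before doing the explicit computation, I would establish the underlying abstract divisibility. Using the parity replacements above, $\Omega_1$ becomes fully symmetric in all four variables on the hyperplane. Setting $\xi_4=0$ (so $\xi_1+\xi_2+\xi_3=0$) gives $\Omega_1\big|_{\xi_4=0}=\xi_2^{2k}+\xi_3^{2k}-(-\xi_2)^{2k}-(-\xi_3)^{2k}=0$, hence $\xi_4\mid \Omega_1$ on the hyperplane, and by symmetry each $\xi_i$ divides $\Omega_1$. Since $\xi_1,\xi_2,\xi_3,-\xi_1-\xi_2-\xi_3$ are pairwise coprime as linear polynomials in $\mathbb{C}[\xi_1,\xi_2,\xi_3]$, the product $\xi_1\xi_2\xi_3\xi_4$ divides $\Omega_1$, with quotient a polynomial of degree $2k-4$, which matches the total degree of the right-hand side of \eqref{equ-decomp1-0}. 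Thus the task reduces to identifying this quotient.

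To produce the explicit formula, I would split $\Omega_1$ into four blocks by pairing each $\xi_i^{2k}$ with a suitably relabelled $(\xi_j+\xi_\ell)^{2k}$, chosen so that the first pass of the telescoping identity pulls out one of the prescribed linear factors; a second pass, after invoking the hyperplane relation once more to identify the remaining differences, pulls out the second linear factor and produces the inner sums of the form $\sum_{m+l=\cdot}\xi_a^m(\xi_a+\xi_b)^l$ appearing in \eqref{equ-decomp1-0}. The four terms in the first bracketed sum over $i+j=2k-5$ correspond to the four natural pairings; the alternating signs $(-1)^i$ arise from the sign reversals $\xi_1+\xi_j=-(\xi_k+\xi_\ell)$ invoked during the telescoping; and the three sums over $i+j=2k-4$ collect the boundary contributions produced when a telescoping index falls at the endpoint of its range, together with double-counting corrections that account for the coefficient $-2$ in the second sum.

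The principal obstacle is bookkeeping rather than conceptual depth: the two telescoping passes produce many monomial expressions that must be re-indexed and combined precisely to obtain the compact form \eqref{equ-decomp1-0}. For safety I would cross-check the identity at the base case $k=2$, where both sides collapse to the constant $-12$ (consistent with the direct computation $\Omega_1(2)=-12\,\xi_1\xi_2\xi_3\xi_4$ that one obtains by expanding the $(\xi_1+\xi_i)^4$ terms and using $\xi_2+\xi_3+\xi_4=-\xi_1$), and more generally by comparing monomial coefficients on both sides modulo the ideal generated by $\xi_1+\xi_2+\xi_3+\xi_4$.
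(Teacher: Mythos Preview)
Your high-level approach --- iterated telescoping via $A^{n}-B^{n}=(A-B)\sum_{i+j=n-1}A^{i}B^{j}$ together with the hyperplane relation $\xi_1+\xi_2+\xi_3+\xi_4=0$ --- is precisely the method the paper employs.  Your preliminary divisibility argument (vanishing at $\xi_4=0$, symmetry, coprimality of the linear factors) is a pleasant addition that the paper does not include; it explains \emph{why} the quotient is a polynomial of the right degree before one sets out to compute it.

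That said, the proposal has a genuine gap: it never actually carries out the computation that yields the specific formula \eqref{equ-decomp1-0}.  The lemma is a precise polynomial identity, not an existence statement, so the ``bookkeeping'' you defer is the entire content of the proof.  Moreover, your proposed organisation into ``four blocks'' --- pairing each $\xi_i^{2k}$ with a relabelled $(\xi_j+\xi_\ell)^{2k}$ --- cannot work as described: $\Omega_1$ has four positive terms $\xi_i^{2k}$ but only three negative terms $(\xi_1+\xi_i)^{2k}$, so there is no one-to-one pairing, and it is not clear from your sketch how the left-over term is absorbed.  The cross-check at $k=2$ verifies only a single constant and does not establish the identity in general; the suggestion to compare monomial coefficients modulo $(\xi_1+\xi_2+\xi_3+\xi_4)$ is in principle valid but is at least as much work as the direct computation.

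The paper's proof is organised differently from your ``four blocks'': it extracts the factors \emph{sequentially}.  Step~1 writes $\Omega_1/\xi_1$ by telescoping $\xi_i^{2k}-(\xi_1+\xi_i)^{2k}$ for $i=2,3,4$ (the leftover $\xi_1^{2k}$ supplies the term $\xi_1^{2k-1}$).  Step~2 uses the hyperplane relation to rewrite the resulting expression so that $\xi_2$ can be factored out, producing $\Omega_1/(\xi_1\xi_2)$; Steps~3 and~4 repeat this for $\xi_3$ and $\xi_4$.  At each step one must split the sums at endpoint indices, re-index, and recombine --- this is exactly where the alternating signs $(-1)^i$, the inner sums $\sum_{m+l=j}(\cdots)$, and the boundary sums over $i+j=2k-4$ arise concretely.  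Without performing this (lengthy) calculation, or an equivalent explicit verification, the lemma is not proved.
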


\begin{remark}\label{rem-lem1-1}
The sums in \eqref{equ-decomp1-0} are taking for all nonnegative numbers. For example,
$$
\sum_{m+l=i}\cdots = \sum_{m+l=i; m,l\geq 0}\cdots.
$$
Moreover, the sum vanishes if the sum taking over the empty set. For example,
$$
\sum_{i+j=-1}\cdots = \sum_{i+j=-1; i,j\geq 0}\cdots = 0.
$$
The sums in the rest of the paper are understood in the same way.
\end{remark}
\begin{remark}\label{rem-lem1-2}
In particular, setting $k=2$ in \eqref{equ-decomp1-0}, using Remark \ref{rem-lem1-1}, we find
\begin{align}\label{equ-rem-lem1-2}
\Omega_1(2; \xi_1,\xi_2,\xi_3,\xi_4)=-12\xi_1\xi_2\xi_3\xi_4.
\end{align}
\end{remark}

\emph{\textbf{Proof of Lemma \ref{lem-decomp-1}.}} The proof is long and the computation is complicated. But the reader can build some intuitions by working out \eqref{equ-rem-lem1-2} following our strategy in the sequel.

We divide the discussion into four steps.

\textbf{Step 1. Find $\frac{\Omega_1}{\xi_1}$.} Rewrite $\Omega_1$ as
$$
\Omega_1 = \xi_1^{2k}+\xi_2^{2k}-(\xi_1+\xi_2)^{2k}+\xi_3^{2k}-(\xi_1+\xi_3)^{2k}+\xi_4^{2k}-(\xi_1+\xi_4)^{2k}.
$$
Using the elementary indentity $x^n-y^n=(x-y)\sum_{i+j=n-1}x^iy^j$\footnote{We shall use the fact implicity in the sequel.} we find
\begin{align*}
\Omega_1 = \xi_1^{2k}-\xi_1 \sum_{i+j=2k-1}\xi_2^i(\xi_1+\xi_2)^j+\xi_3^i(\xi_1+\xi_3)^j+\xi_4^i(\xi_1+\xi_4)^j.
\end{align*}
Thus, we have
\begin{align}\label{equ-decomp1-1}
\frac{\Omega_1}{\xi_1} = \xi_1^{2k-1}-\sum_{i+j=2k-1}\xi_2^i(\xi_1+\xi_2)^j+\xi_3^i(\xi_1+\xi_3)^j+\xi_4^i(\xi_1+\xi_4)^j.
\end{align}

\textbf{Step 2. Find $\frac{\Omega_1}{\xi_1\xi_2}$.} Split
\begin{align}\label{equ-decomp1-2}
\sum_{i+j=2k-1}\xi_2^i(\xi_1+\xi_2)^j = (\xi_1+\xi_2)^{2k-1}+ \sum_{i+j=2k-1, i\geq 1}\xi_2^i(\xi_1+\xi_2)^j,
\end{align}
and use $\xi_1+\xi_2+\xi_3+\xi_4=0$ to write
\begin{align}\label{equ-decomp1-3}
\xi_3^i(\xi_1+\xi_3)^j = \xi_3^i(-\xi_2-\xi_4)^j, \quad \xi_4^i(\xi_1+\xi_4)^j = \xi_4^i(-\xi_2-\xi_3)^j.
\end{align}
Inserting \eqref{equ-decomp1-2} and \eqref{equ-decomp1-3} into \eqref{equ-decomp1-1} we obtain
\begin{multline}\label{equ-decomp1-4}
\frac{\Omega_1}{\xi_1} = \xi_1^{2k-1}-(\xi_1+\xi_2)^{2k-1}-  \sum_{i+j=2k-1, i\geq 1}\xi_2^i(\xi_1+\xi_2)^j\\
-\sum_{i+j=2k-1}\Big(\xi_3^i(-\xi_2-\xi_4)^j+\xi_4^i(-\xi_2-\xi_3)^j\Big).
\end{multline}
On one hand, we have
\begin{multline}\label{equ-decomp1-4.5}
\xi_1^{2k-1}-(\xi_1+\xi_2)^{2k-1}-  \sum_{i+j=2k-1, i\geq 1}\xi_2^i(\xi_1+\xi_2)^j\\
=-\xi_2\sum_{i+j=2k-2}\xi_1^i(\xi_1+\xi_2)^{j}-  \xi_2\sum_{i+j=2k-1, i\geq 1}\xi_2^{i-1}(\xi_1+\xi_2)^j.
\end{multline}
Since $\sum_{i+j=2k-1, i\geq 1}\xi_2^{i-1}(\xi_1+\xi_2)^j = \sum_{i+j=2k-2}\xi_2^i(\xi_1+\xi_2)^{j}$,
\begin{align}\label{equ-decomp1-4.6}
RHS \eqref{equ-decomp1-4.5} = -\xi_2\sum_{i+j=2k-2}(\xi_1^i+\xi_2^i)(\xi_1+\xi_2)^{j}.
\end{align}
On the other hand,
\begin{multline}\label{equ-decomp1-4.7}
-\sum_{i+j=2k-1}\Big(\xi_3^i(-\xi_2-\xi_4)^j+\xi_4^i(-\xi_2-\xi_3)^j\Big) = -\sum_{i+j=2k-1}\Big(\xi_3^i(-\xi_2-\xi_4)^j+\xi_4^j(-\xi_2-\xi_3)^i\Big)\\
=\sum_{i+j=2k-1}(-1)^i\Big(\xi_3^i(\xi_2+\xi_4)^j-\xi_4^j(\xi_2+\xi_3)^i\Big).
\end{multline}
Split the sum in \eqref{equ-decomp1-4.7} into cases: (1) $i,j\geq 1$, (2) $i=0$, (3) $j=0$. Then we obtain
\begin{equation}\label{equ-decomp1-6}
\begin{split}
RHS \eqref{equ-decomp1-4.7} = &\sum_{i+j=2k-1, i,j\geq 1}(-1)^i\bigg(\xi_3^i(\xi_2+\xi_4)^j-\xi_4^j(\xi_2+\xi_3)^i\bigg)\\
&+(\xi_2+\xi_4)^{2k-1}-\xi_4^{2k-1} + (\xi_2+\xi_3)^{2k-1}-\xi_3^{2k-1}.
\end{split}
\end{equation}
Rewrite the term in the sum of \eqref{equ-decomp1-6} as
\begin{align}\label{equ-decomp1-7}
\xi_3^i(\xi_2+\xi_4)^j-\xi_4^j(\xi_2+\xi_3)^i &= \xi_3^i((\xi_2+\xi_4)^j-\xi_4^j)-\xi_4^j((\xi_2+\xi_3)^i-\xi_3^i)\nonumber\\
&=\xi_3^i \xi_2 \sum_{m+l=j-1} \xi_4^m(\xi_2+\xi_4)^l - \xi_4^j \xi_2 \sum_{m+l=i-1} \xi_3^m(\xi_2+\xi_3)^l.
\end{align}
Thanks to \eqref{equ-decomp1-7}, we deduce from \eqref{equ-decomp1-6} that
\begin{equation}\label{equ-decomp1-8}
\begin{split}
RHS \eqref{equ-decomp1-4.7}= &\sum_{i+j=2k-1, i,j\geq 1}(-1)^i\xi_2\bigg(\xi_3^i  \sum_{m+l=j-1} \xi_4^m(\xi_2+\xi_4)^l - \xi_4^j  \sum_{m+l=i-1} \xi_3^m(\xi_2+\xi_3)^l\bigg)\\
&+\xi_2 \bigg(\sum_{i+j=2k-2}\xi_4^i(\xi_2+\xi_4)^{j} + \xi_2\sum_{i+j=2k-2}\xi_3^i(\xi_2+\xi_3)^{j}\bigg).
\end{split}
\end{equation}
Combining \eqref{equ-decomp1-4.6} and \eqref{equ-decomp1-8} that
\begin{equation}\label{equ-decomp1-9}
\begin{split}
\frac{\Omega_1}{\xi_1\xi_2}= &-\sum_{i+j=2k-2}(\xi_1^i+\xi_2^i)(\xi_1+\xi_2)^{j}+\sum_{i+j=2k-2}\xi_4^i(\xi_2+\xi_4)^{j} + \sum_{i+j=2k-2}\xi_3^i(\xi_2+\xi_3)^{j}\\
&+\sum_{i+j=2k-1, i,j\geq 1}(-1)^i\bigg(\xi_3^i  \sum_{m+l=j-1} \xi_4^m(\xi_2+\xi_4)^l - \xi_4^j  \sum_{m+l=i-1} \xi_3^m(\xi_2+\xi_3)^l\bigg).
\end{split}
\end{equation}

\textbf{Step 3. Find $\frac{\Omega_1}{\xi_1\xi_2\xi_3}$.} Using $\xi_1+\xi_2+\xi_3+\xi_4=0$ again, we rewrite \eqref{equ-decomp1-9} as
\begin{align}\label{equ-decomp1-10}
\frac{\Omega_1}{\xi_1\xi_2} &= \sum_{i+j=2k-2}\xi_4^i(-\xi_1-\xi_3)^{j}-\xi_1^i(-\xi_3-\xi_4)^j+\sum_{i+j=2k-2}\xi_3^i(\xi_2+\xi_3)^{j} -\sum_{i+j=2k-2}\xi_2^i(-\xi_3-\xi_4)^{j}\nonumber\\
&\quad \, +\sum_{i+j=2k-1, i,j\geq 1}(-1)^i\bigg(\xi_3^i  \sum_{m+l=j-1} \xi_4^m(\xi_2+\xi_4)^l - \xi_4^j  \sum_{m+l=i-1} \xi_3^m(\xi_2+\xi_3)^l\bigg)\nonumber\\
&= A_1 +A_2+A_3,
\end{align}
where $A_1, A_2, A_3$ are given by
\begin{eqnarray}
A_1 &=& \sum_{i+j=2k-2}\xi_4^i(-\xi_1-\xi_3)^{j}-\xi_1^i(-\xi_3-\xi_4)^j,\label{equ-decomp1-11}\\
A_2 &=&  \sum_{\substack{i+j=2k-1,\\ i,j\geq 1}}(-1)^i\xi_3^i  \sum_{m+l=j-1} \xi_4^m(\xi_2+\xi_4)^l -\sum_{\substack{i+j=2k-1,\\ i,j\geq 1}}(-1)^i \xi_4^j  \sum_{\substack{m+l=i-1,\\ m\geq 1}} \xi_3^m(\xi_2+\xi_3)^l\nonumber\\
& &+  \sum_{\substack{ i+j=2k-2,\\ i\geq 1}}\xi_3^i(\xi_2+\xi_3)^{j},\label{equ-decomp1-11.5}\\
A_3 &=& -\sum_{\substack{i+j=2k-1,\\ i,j\geq 1}}(-1)^i \xi_4^j  \sum_{\substack{m+l=i-1,\\ m=0}} \xi_3^m(\xi_2+\xi_3)^l + (\xi_2+\xi_3)^{2k-2} - \sum_{i+j=2k-2}\xi_2^i(-\xi_3-\xi_4)^{j}\nonumber.
\end{eqnarray}
We simplify $A_3$ as
\begin{align}\label{equ-decomp1-13}
A_3 &= -\sum_{i+j=2k-1, i,j\geq 1}(-1)^i \xi_4^j   (\xi_2+\xi_3)^{i-1} + (\xi_2+\xi_3)^{2k-2} - \sum_{i+j=2k-2}\xi_2^i(-\xi_3-\xi_4)^{j}\nonumber\\
&=\sum_{i+j=2k-1, i,j\geq 1} (-\xi_4)^j   (\xi_2+\xi_3)^{i-1} + (\xi_2+\xi_3)^{2k-2} - \sum_{i+j=2k-2}\xi_2^i(-\xi_3-\xi_4)^{j}\nonumber\\
&=\sum_{i+j=2k-2}(-\xi_4)^j(\xi_2+\xi_3)^i-\sum_{i+j=2k-2}\xi_2^i(-\xi_3-\xi_4)^{j}\nonumber\\
&=\sum_{i+j=2k-2}(-\xi_4)^i(\xi_2+\xi_3)^j-\sum_{i+j=2k-2}\xi_2^j(-\xi_3-\xi_4)^{i}\nonumber\\
&=\sum_{i+j=2k-2}(-1)^i\Big(\xi_4^i(\xi_2+\xi_3)^j-\xi_2^j(\xi_3+\xi_4)^{i}\Big).
\end{align}
To proceed, we rewrite \eqref{equ-decomp1-13} as
\begin{align}\label{equ-decomp1-14}
RHS \eqref{equ-decomp1-13}&=\sum_{i+j=2k-2, i,j\geq 1}(-1)^i\Big(\xi_4^i(\xi_2+\xi_3)^j-\xi_2^j(\xi_3+\xi_4)^{i}\Big)\nonumber\\
& \quad + \xi_4^{2k-2}-(\xi_3+\xi_4)^{2k-2}+(\xi_2+\xi_3)^{2k-2}-\xi_2^{2k-2}\nonumber\\
&=\sum_{i+j=2k-2, i,j\geq 1}(-1)^i\bigg(\xi_4^i\Big((\xi_2+\xi_3)^j-\xi_2^j\Big)-\xi_2^j\Big((\xi_3+\xi_4)^{i}-\xi_4^i\Big)\bigg)\nonumber\\
& \quad + \xi_4^{2k-2}-(\xi_3+\xi_4)^{2k-2}+(\xi_2+\xi_3)^{2k-2}-\xi_2^{2k-2}\nonumber\\
&=\sum_{i+j=2k-2, i,j\geq 1}(-1)^i\xi_3\bigg(\xi_4^i\sum_{m+l=j-1}\xi_2^m(\xi_2+\xi_3)^l-\xi_2^j\sum_{m+l=i-1}\xi_4^m(\xi_3+\xi_4)^{l}\bigg)\nonumber\\
& \quad + \xi_3\sum_{i+j=2k-3}\Big(\xi_2^i(\xi_2+\xi_3)^j-\xi_4^i(\xi_3+\xi_4)^j\Big)\nonumber\\
&=\sum_{i+j=2k-4}(-1)^i\xi_3\bigg(\xi_2^{j+1}\sum_{m+l=i}\xi_4^m(\xi_3+\xi_4)^{l}-\xi_4^{i+1}\sum_{m+l=j}\xi_2^m(\xi_2+\xi_3)^l\bigg)\nonumber\\
& \quad + \xi_3\sum_{i+j=2k-3}\xi_2^i(\xi_2+\xi_3)^j-\xi_4^i(\xi_3+\xi_4)^j.
\end{align}
Replacing $\xi_2$ by $\xi_1$ in \eqref{equ-decomp1-14}, we obtain
\begin{align}\label{equ-decomp1-15}
A_1 &=\sum_{i+j=2k-2}\xi_4^i(-\xi_1-\xi_3)^{j}-\xi_1^i(-\xi_3-\xi_4)^j\nonumber\\
&=\sum_{i+j=2k-2}(-1)^i\Big(\xi_4^i(\xi_1+\xi_3)^j-\xi_1^j(\xi_3+\xi_4)^{i}\Big)\nonumber\\
&=\sum_{i+j=2k-4}(-1)^i\xi_3\bigg(\xi_1^{j+1}\sum_{m+l=i}\xi_4^m(\xi_3+\xi_4)^{l}-\xi_4^{i+1}\sum_{m+l=j}\xi_1^m(\xi_1+\xi_3)^l\bigg)\nonumber\\
& \quad + \xi_3\sum_{i+j=2k-3}\xi_1^i(\xi_1+\xi_3)^j-\xi_4^i(\xi_3+\xi_4)^j.
\end{align}
For $A_2$, we have
\begin{align}\label{equ-decomp1-16}
A_2 &= \xi_3 \sum_{\substack{i+j=2k-1,\\ i,j\geq 1}}(-1)^i\xi_3^{i-1}  \sum_{m+l=j-1} \xi_4^m(\xi_2+\xi_4)^l -\xi_3\sum_{\substack{i+j=2k-1,\\ i,j\geq 1}}(-1)^i \xi_4^j  \sum_{\substack{m+l=i-1,\\ m\geq 1}} \xi_3^{m-1}(\xi_2+\xi_3)^l\nonumber\\
& \quad +  \xi_3\sum_{\substack{ i+j=2k-2,\\ i\geq 1}}\xi_3^{i-1}(\xi_2+\xi_3)^{j}\nonumber\\
&= -\xi_3 \sum_{i+j=2k-3}(-1)^i\xi_3^{i}  \sum_{m+l=j} \xi_4^m(\xi_2+\xi_4)^l +\xi_3\sum_{i+j=2k-3}(-1)^i \xi_4^{j+1}  \sum_{\substack{m+l=i,\\ m\geq 1}} \xi_3^{m-1}(\xi_2+\xi_3)^l\nonumber\\
& \quad +  \xi_3\sum_{i+j=2k-3}\xi_3^{i}(\xi_2+\xi_3)^{j}
\end{align}
It follows from \eqref{equ-decomp1-10}-\eqref{equ-decomp1-16} that
\begin{equation}\label{equ-decomp1-17}
\begin{split}
\frac{\Omega_1}{\xi_1\xi_2\xi_3}= &\sum_{i+j=2k-4}(-1)^i\bigg(\xi_2^{j+1}\sum_{m+l=i}\xi_4^m(\xi_3+\xi_4)^{l}-\xi_4^{i+1}\sum_{m+l=j}\xi_2^m(\xi_2+\xi_3)^l\bigg)\\
&  + \sum_{i+j=2k-3}\xi_2^i(\xi_2+\xi_3)^j-\xi_4^i(\xi_3+\xi_4)^j\\
&+\sum_{i+j=2k-4}(-1)^i\bigg(\xi_1^{j+1}\sum_{m+l=i}\xi_4^m(\xi_3+\xi_4)^{l}-\xi_4^{i+1}\sum_{m+l=j}\xi_1^m(\xi_1+\xi_3)^l\bigg)\\
& + \sum_{i+j=2k-3}\xi_1^i(\xi_1+\xi_3)^j-\xi_4^i(\xi_3+\xi_4)^j\\
& - \sum_{i+j=2k-3}(-1)^i\xi_3^{i}  \sum_{m+l=j} \xi_4^m(\xi_2+\xi_4)^l +\sum_{i+j=2k-3}(-1)^i \xi_4^{j+1}  \sum_{\substack{m+l=i,\\ m\geq 1}} \xi_3^{m-1}(\xi_2+\xi_3)^l\\
&  +  \sum_{i+j=2k-3}\xi_3^{i}(\xi_2+\xi_3)^{j}.
\end{split}
\end{equation}

\textbf{Step 4. Find $\frac{\Omega_1}{\xi_1\xi_2\xi_3\xi_4}$.} For our purpose, we rewrite \eqref{equ-decomp1-17} as
\begin{align}\label{equ-decomp1-18}
\frac{\Omega_1}{\xi_1\xi_2\xi_3} = B_1 + B_2 ,
\end{align}
where $B_1$ consisting of all terms with a explicit factor $\xi_4$, $B_2$ consisting of the remainder terms.  More precisely, $B_1, B_2$ are given by
\begin{align}\label{equ-decomp1-19}
B_1= &\sum_{i+j=2k-4}(-1)^i\bigg(\xi_2^{j+1}\sum_{\substack{m+l=i,\\ m\geq 1}}\xi_4^m(\xi_3+\xi_4)^{l}-\xi_4^{i+1}\sum_{m+l=j}\xi_2^m(\xi_2+\xi_3)^l\bigg) + \sum_{\substack{i+j=2k-3,\\ i\geq 1}}-\xi_4^i(\xi_3+\xi_4)^j\nonumber\\
& +\sum_{i+j=2k-4}(-1)^i\bigg(\xi_1^{j+1}\sum_{\substack{m+l=i,\\ m\geq 1}}\xi_4^m(\xi_3+\xi_4)^{l}-\xi_4^{i+1}\sum_{m+l=j}\xi_1^m(\xi_1+\xi_3)^l\bigg) + \sum_{\substack{i+j=2k-3, \\i\geq 1}}-\xi_4^i(\xi_3+\xi_4)^j\nonumber\\
&  - \sum_{i+j=2k-3}(-1)^i\xi_3^{i}  \sum_{\substack{m+l=j,\\ m\geq 1}} \xi_4^m(\xi_2+\xi_4)^l +\sum_{i+j=2k-3}(-1)^i \xi_4^{j+1}  \sum_{\substack{m+l=i,\\ m\geq 1}} \xi_3^{m-1}(\xi_2+\xi_3)^l,
\end{align}
\begin{align}\label{equ-decomp1-20}
B_2= &\sum_{i+j=2k-4}(-1)^i\xi_2^{j+1}\sum_{\substack{m+l=i,\\ m=0}}\xi_4^m(\xi_3+\xi_4)^{l}+\sum_{i+j=2k-3}\xi_2^i(\xi_2+\xi_3)^j+ \sum_{\substack{i+j=2k-3,\\ i=0}}-\xi_4^i(\xi_3+\xi_4)^j\nonumber\\
&\quad +\sum_{i+j=2k-4}(-1)^i\xi_1^{j+1}\sum_{\substack{m+l=i,\\ m=0}}\xi_4^m(\xi_3+\xi_4)^{l} + \sum_{i+j=2k-3}\xi_1^i(\xi_1+\xi_3)^j+\sum_{\substack{i+j=2k-3, \\i=0}}-\xi_4^i(\xi_3+\xi_4)^j\nonumber\\
& \quad - \sum_{i+j=2k-3}(-1)^i\xi_3^{i}  \sum_{\substack{m+l=j,\\ m=0}} \xi_4^m(\xi_2+\xi_4)^l  +  \sum_{i+j=2k-3}\xi_3^{i}(\xi_2+\xi_3)^{j}.
\end{align}
\textbf{Contribution of $B_1$.} It follows from \eqref{equ-decomp1-19} that
\begin{align}\label{equ-decomp1-201}
\frac{B_1}{\xi_4} &= \sum_{i+j=2k-4}(-1)^i\bigg(\xi_2^{j+1}\sum_{\substack{m+l=i,\\ m\geq 1}}\xi_4^{m-1}(\xi_3+\xi_4)^{l}-\xi_4^{i}\sum_{m+l=j}\xi_2^m(\xi_2+\xi_3)^l\bigg) + \sum_{\substack{i+j=2k-3,\\ i\geq 1}}-\xi_4^{i-1}(\xi_3+\xi_4)^j\nonumber\\
& \quad +\sum_{i+j=2k-4}(-1)^i\bigg(\xi_1^{j+1}\sum_{\substack{m+l=i,\\ m\geq 1}}\xi_4^{m-1}(\xi_3+\xi_4)^{l}-\xi_4^{i}\sum_{m+l=j}\xi_1^m(\xi_1+\xi_3)^l\bigg) + \sum_{\substack{i+j=2k-3, \\i\geq 1}}-\xi_4^{i-1}(\xi_3+\xi_4)^j\nonumber\\
& \quad - \sum_{i+j=2k-3}(-1)^i\xi_3^{i}  \sum_{\substack{m+l=j,\\ m\geq 1}} \xi_4^{m-1}(\xi_2+\xi_4)^l +\sum_{i+j=2k-3}(-1)^i \xi_4^{j}  \sum_{\substack{m+l=i,\\ m\geq 1}} \xi_3^{m-1}(\xi_2+\xi_3)^l\nonumber\\
&= \sum_{i+j=2k-4}(-1)^i\bigg(\xi_2^{j+1}\sum_{\substack{m+l=i-1}}\xi_4^{m}(\xi_3+\xi_4)^{l}-\xi_4^{i}\sum_{m+l=j}\xi_2^m(\xi_2+\xi_3)^l\bigg) - \sum_{i+j=2k-4}\xi_4^{i}(\xi_3+\xi_4)^j\nonumber\\
&\quad +\sum_{i+j=2k-4}(-1)^i\bigg(\xi_1^{j+1}\sum_{\substack{m+l=i-1}}\xi_4^{m}(\xi_3+\xi_4)^{l}-\xi_4^{i}\sum_{m+l=j}\xi_1^m(\xi_1+\xi_3)^l\bigg) - \sum_{\substack{i+j=2k-4}}\xi_4^{i}(\xi_3+\xi_4)^j\nonumber\\
& \quad - \sum_{i+j=2k-3}(-1)^i\xi_3^{i}  \sum_{\substack{m+l=j-1}} \xi_4^{m}(\xi_2+\xi_4)^l +\sum_{i+j=2k-3}(-1)^i \xi_4^{j}  \sum_{\substack{m+l=i-1}} \xi_3^{m}(\xi_2+\xi_3)^l.
\end{align}
Using the fact
\begin{multline*}
 - \sum_{i+j=2k-3}(-1)^i\xi_3^{i}  \sum_{\substack{m+l=j-1}} \xi_4^{m}(\xi_2+\xi_4)^l +\sum_{i+j=2k-3}(-1)^i \xi_4^{j}  \sum_{\substack{m+l=i-1}} \xi_3^{m}(\xi_2+\xi_3)^l\\
= -\sum_{i+j=2k-4}(-1)^i\bigg(\xi_4^{i}  \sum_{\substack{m+l=j}} \xi_3^{m}(\xi_2+\xi_3)^l +\xi_3^{i}  \sum_{\substack{m+l=j}} \xi_4^{m}(\xi_2+\xi_4)^l\bigg)
\end{multline*}
and rearranging the terms in  \eqref{equ-decomp1-201}, we obtain
\begin{equation}\label{equ-decomp1-205}
\begin{split}
\frac{B_1}{\xi_4}= &-2 \sum_{\substack{i+j=2k-4}}\xi_4^{i}(\xi_3+\xi_4)^j+\sum_{i+j=2k-4}(-1)^i\bigg((\xi_1^{i+1}+\xi_2^{i+1})\sum_{\substack{m+l=j-1}}\xi_4^{m}(\xi_3+\xi_4)^{l}\bigg)\\
&  -\sum_{i+j=2k-4}(-1)^i\bigg(\xi_4^{i}  \sum_{\substack{m+l=j}} \xi_1^{m}(\xi_1+\xi_3)^l +\xi_3^{i}  \sum_{\substack{m+l=j}} \xi_4^{m}(\xi_2+\xi_4)^l\bigg)\\
& -\sum_{i+j=2k-4}(-1)^i\bigg(\xi_4^{i}  \sum_{\substack{m+l=j}} (\xi_2^{m}+\xi_3^{m})(\xi_2+\xi_3)^l \bigg).
\end{split}
\end{equation}

\textbf{Contribution of $B_2$.} Using $\xi_1+\xi_2+\xi_3+\xi_4=0$, we rewrite \eqref{equ-decomp1-20} as
\begin{align}\label{equ-decomp1-21}
B_2 &= \sum_{i+j=2k-4}(-1)^i\xi_2^{j+1}(\xi_3+\xi_4)^{i}+\sum_{i+j=2k-3}\xi_2^i(-\xi_1-\xi_4)^j-(\xi_3+\xi_4)^{2k-3}\nonumber\\
&\quad +\sum_{i+j=2k-4}(-1)^i\xi_1^{j+1}(\xi_3+\xi_4)^{i} + \sum_{i+j=2k-3}\xi_1^i(-\xi_2-\xi_4)^j-(\xi_3+\xi_4)^{2k-3}\nonumber\\
& \quad - \sum_{i+j=2k-3}(-1)^i\xi_3^{i} (\xi_2+\xi_4)^j +  \sum_{i+j=2k-3}\xi_3^{i}(-\xi_1-\xi_4)^{j}\nonumber\\
&= \sum_{i+j=2k-4}(-1)^i\xi_2^{j+1}(\xi_3+\xi_4)^{i}- \sum_{i+j=2k-3}(-1)^i\xi_3^{i} (\xi_2+\xi_4)^j-(\xi_3+\xi_4)^{2k-3}\nonumber\\
&\quad +\sum_{i+j=2k-4}(-1)^i\xi_1^{j+1}(\xi_3+\xi_4)^{i} +  \sum_{i+j=2k-3}\xi_3^{i}(-\xi_1-\xi_4)^{j}-(\xi_3+\xi_4)^{2k-3}\nonumber\\
& \quad +\sum_{i+j=2k-3}\xi_2^i(-\xi_1-\xi_4)^j+ \sum_{i+j=2k-3}\xi_1^i(-\xi_2-\xi_4)^j\nonumber\\
&:=B_{21} +B_{22}+B_{23}.
\end{align}
We deal with $B_{21},B_{22},B_{23}$ as follows. For $B_{21}$,
\begin{align}\label{equ-decomp1-22}
B_{21} &= \sum_{i+j=2k-4}(-1)^i\xi_2^{j+1}(\xi_3+\xi_4)^{i}- \sum_{i+j=2k-3}(-1)^i\xi_3^{i} (\xi_2+\xi_4)^j-(\xi_3+\xi_4)^{2k-3}\nonumber\\
&=\sum_{i+j=2k-4}(-1)^i\xi_2^{j+1}(\xi_3+\xi_4)^{i}- \sum_{\substack{i+j=2k-3,\\j \geq 1}}(-1)^i\xi_3^{i} (\xi_2+\xi_4)^j +\xi_3^{2k-3}-(\xi_3+\xi_4)^{2k-3}\nonumber\\
&=\sum_{i+j=2k-4}(-1)^i\Big(\xi_2^{j+1}(\xi_3+\xi_4)^{i} - \xi_3^{i} (\xi_2+\xi_4)^{j+1}\Big) +\xi_3^{2k-3}-(\xi_3+\xi_4)^{2k-3}\nonumber\\
&=\sum_{i+j=2k-5}(-1)^i\Big(\xi_2^{j+1}(\xi_3+\xi_4)^{i+1} - \xi_3^{i+1} (\xi_2+\xi_4)^{j+1}\Big)\nonumber\\
&\quad +\xi_2^{2k-3}-(\xi_2+\xi_4)^{2k-3} +\xi_3^{2k-3}-(\xi_3+\xi_4)^{2k-3}\nonumber\\
& = \xi_4\sum_{i+j=2k-5}(-1)^i \bigg(\xi_2^{j+1}\sum_{m+l=i}\xi_3^m(\xi_3+\xi_4)^l -\xi_3^{i+1} \sum_{m+l=j}\xi_2^m(\xi_2+\xi_4)^l \bigg)\nonumber\\
&\quad -\xi_4 \sum_{i+j=2k-4}\bigg(\xi_2^i(\xi_2+\xi_4)^j+ \xi_3^i(\xi_3+\xi_4)^j \bigg).
\end{align}
Similarly, we have
\begin{equation}\label{equ-decomp1-23}
\begin{split}
B_{22} = & \quad \xi_4\sum_{i+j=2k-5}(-1)^i \bigg(\xi_1^{j+1}\sum_{m+l=i}\xi_3^m(\xi_3+\xi_4)^l -\xi_3^{i+1} \sum_{m+l=j}\xi_1^m(\xi_1+\xi_4)^l \bigg)\\
& -\xi_4 \sum_{i+j=2k-4}\bigg(\xi_1^i(\xi_1+\xi_4)^j+ \xi_3^i(\xi_3+\xi_4)^j \bigg).
\end{split}
\end{equation}
For $B_{23}$, we have
\begin{align}\label{equ-decomp1-24}
B_{23}&= \sum_{i+j=2k-3}\xi_2^i(-\xi_1-\xi_4)^j+ \sum_{i+j=2k-3}\xi_1^i(-\xi_2-\xi_4)^j\nonumber\\
&=\sum_{i+j=2k-3}\xi_2^j(-\xi_1-\xi_4)^i+ \sum_{i+j=2k-3}\xi_1^i(-\xi_2-\xi_4)^j\nonumber\\
&=\sum_{i+j=2k-3}(-1)^i\bigg(\xi_2^j(\xi_1+\xi_4)^i- \xi_1^i(\xi_2+\xi_4)^j\bigg)\nonumber\\
&=\sum_{\substack{i+j=2k-3,\\ i,j\geq 1}}(-1)^i\bigg(\xi_2^j(\xi_1+\xi_4)^i- \xi_1^i(\xi_2+\xi_4)^j\bigg)+ \xi_1^{2k-3}-(\xi_1+\xi_4)^{2k-3}+\xi_2^{2k-3}-(\xi_2+\xi_4)^{2k-3}\nonumber\\
&=\sum_{\substack{i+j=2k-3,\\ i,j\geq 1}}(-1)^i\xi_4\bigg(\xi_2^j\sum_{m+l=i-1}\xi_1^m(\xi_1+\xi_4)^l- \xi_1^i\sum_{m+l=j-1}\xi_2^m(\xi_2+\xi_4)^l\bigg)\nonumber\\
&\quad -\xi_4 \sum_{i+j=2k-4} \Big(\xi_1^i(\xi_1+\xi_4)^j+\xi_2^i(\xi_2+\xi_4)^j\Big)\nonumber\\
&=\sum_{\substack{i+j=2k-5}}(-1)^i\xi_4\bigg(\xi_1^{i+1}\sum_{m+l=j}\xi_2^m(\xi_2+\xi_4)^l-\xi_2^{j+1}\sum_{m+l=i}\xi_1^m(\xi_1+\xi_4)^l\bigg)\nonumber\\
&\quad -\xi_4 \sum_{i+j=2k-4} \Big( \xi_1^i(\xi_1+\xi_4)^j+\xi_2^i(\xi_2+\xi_4)^j \Big)\nonumber\\
&=\sum_{\substack{i+j=2k-5}}(-1)^i\xi_4\bigg(\xi_1^{i+1}\sum_{m+l=j}\xi_2^m(\xi_2+\xi_4)^l+\xi_2^{i+1}\sum_{m+l=j}\xi_1^m(\xi_1+\xi_4)^l\bigg)\nonumber\\
&\quad -\xi_4 \sum_{i+j=2k-4} \Big(\xi_1^i(\xi_1+\xi_4)^j+\xi_2^i(\xi_2+\xi_4)^j\Big).
\end{align}
It follows from \eqref{equ-decomp1-21}-\eqref{equ-decomp1-24} that
\begin{equation}\label{equ-decomp1-25}
\begin{split}
\frac{B_{2}}{\xi_4} = & \sum_{i+j=2k-5}(-1)^i \bigg((\xi_1^{j+1}+\xi_2^{j+1})\sum_{m+l=i}\xi_3^m(\xi_3+\xi_4)^l +(\xi_1^{i+1}-\xi_3^{i+1}) \sum_{m+l=j}\xi_2^m(\xi_2+\xi_4)^l\\
& \qquad \qquad \qquad \quad+ (\xi_2^{i+1}-\xi_3^{i+1})\sum_{m+l=j}\xi_1^m(\xi_1+\xi_4)^l\bigg)\\
& -2 \sum_{i+j=2k-4}\bigg(\xi_1^i(\xi_1+\xi_4)^j+\xi_2^i(\xi_2+\xi_4)^j+ \xi_3^i(\xi_3+\xi_4)^j \bigg).
\end{split}
\end{equation}
Combining \eqref{equ-decomp1-205} and \eqref{equ-decomp1-25} gives the lemma \ref{lem-decomp-1}.

\subsection{Decomposition 2}\label{subsec-3.2}
In this subsection, we get rid of the singularity of
$$
\frac{\Omega_2}{\alpha_4} = \frac{\Omega_2}{3(\xi_1+\xi_2)(\xi_1+\xi_3)(\xi_1+\xi_4)},
$$
where we used \eqref{equ-alpha4-3}. To this end, we shall show that for $k\geq 2$
$$
\Omega_2=\xi_1^{2k+1}+\xi_2^{2k+1}+\xi_3^{2k+1}+\xi_4^{2k+1} = (\xi_1+\xi_2)(\xi_1+\xi_3)(\xi_1+\xi_4) \cdot \textmd{ a polynomial}
$$
on the  hyperplane $\xi_1+\xi_2+\xi_3+\xi_4=0$. This is contained in the following lemma, in which we give a formula of the polynomial.
\begin{lemma}\label{lem-decomp-2}
Assume that $\mathbb{N}\ni k\geq 1$ and $\Omega_2$ is given by \eqref{equ-omega-2}. Then
\begin{align}\label{equ-decomp2-0}
\frac{\Omega_2(k)}{(\xi_1+\xi_2)(\xi_1+\xi_3)(\xi_1+\xi_4)}= &\sum_{i+j=2k-2}(-1)^i(2\xi_1^i\xi_4^j+\xi_2^i\xi_3^j)\nonumber\\
&+ \sum_{\substack{i+j=2k-1, \\ i,j\geq 1}}\bigg((-\xi_3)^i\sum_{m+l=j-1}\xi_1^m(-\xi_4)^l + \xi_4^j\sum_{m+l=i-1}(-\xi_2)^m\xi_3^l\bigg)\nonumber\\
&+\sum_{\substack{i+j=2k-2, \\ j\geq 1}}(-1)^{i}\sum_{n+h=i}\xi_1^n(-\xi_4)^h\sum_{m+l=j}\xi_2^m(-\xi_4)^l \nonumber\\
& +\sum_{\substack{i+j=2k-2, \\ j\geq 1}}(-1)^{i+1}\xi_4^{i+1}\bigg[\sum_{m+l=j-1}\Big(\xi_3^m(-\xi_2)^l+\xi_4^m(-\xi_1)^l\Big)\nonumber\\
& +\sum_{\substack{m+l=j,\\ m,l\geq 1}}\Big((-\xi_1)^l \sum_{n+h=m-1}\xi_3^n(-\xi_2)^h + (-\xi_2)^m\sum_{n+h=l-1}\xi_4^n(-\xi_1)^h\Big) \bigg].
\end{align}

\end{lemma}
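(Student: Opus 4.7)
The plan is to mirror the iterative factoring strategy used in the proof of Lemma \ref{lem-decomp-1}, now extracting the three factors $(\xi_1+\xi_2)$, $(\xi_1+\xi_3)$, $(\xi_1+\xi_4)$ one at a time. On the hyperplane $\xi_1+\xi_2+\xi_3+\xi_4=0$ these are equal, up to signs, to $-(\xi_3+\xi_4)$, $-(\xi_2+\xi_4)$, $-(\xi_2+\xi_3)$ respectively, which is precisely what forces the divisibility: specializing any one of them to zero pairs $\Omega_2$ into $a^{2k+1}+(-a)^{2k+1}+b^{2k+1}+(-b)^{2k+1}=0$.

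The main tools are the odd-power identity $a^n+b^n=(a+b)\sum_{i+j=n-1}(-1)^i a^i b^j$ and the difference identity $a^n-b^n=(a-b)\sum_{i+j=n-1}a^i b^j$, combined with the hyperplane relation to rewrite one variable in terms of the others. First I pair $\Omega_2=(\xi_1^{2k+1}+\xi_2^{2k+1})+(\xi_3^{2k+1}+\xi_4^{2k+1})$ and use $\xi_3+\xi_4=-(\xi_1+\xi_2)$ to extract the first factor, leaving
$$\Omega_2/(\xi_1+\xi_2)=\sum_{i+j=2k}(-1)^i\bigl(\xi_1^i\xi_2^j-\xi_3^i\xi_4^j\bigr).$$
To pull out $(\xi_1+\xi_3)$ next, I rewrite $\xi_1+\xi_3=-(\xi_2+\xi_4)$, split each sum according to whether $i=0$, $j=0$, or $i,j\geq 1$, and apply the difference identity to the interior pieces so that $(\xi_2+\xi_4)$ appears as a common factor while the boundary pieces produce the clean monomial terms. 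The final extraction of $(\xi_1+\xi_4)$ via $\xi_2+\xi_3=-(\xi_1+\xi_4)$ is carried out by the same case-splitting and re-indexing, and collecting everything according to the block structure of \eqref{equ-decomp2-0} finishes the identity.

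The chief obstacle is purely combinatorial bookkeeping. After three successive factorings the expression branches into many sub-cases (which endpoint indices vanish in each sum), and each sub-case must be matched, with the correct $(-1)^i$ signs and re-indexed inner sums, to exactly one of the four blocks in \eqref{equ-decomp2-0}; several boundary terms will need to cancel or be absorbed into neighbouring telescoping sums before the formula takes its stated form. As a sanity check, at $k=1$ the claim reduces to $\Omega_2(1)/((\xi_1+\xi_2)(\xi_1+\xi_3)(\xi_1+\xi_4))=3$ on the hyperplane (since $\Omega_2(1)=\alpha_4$ by \eqref{equ-alpha4-3}): only the $i+j=0$ summand of the first block in \eqref{equ-decomp2-0} survives, giving $2+1=3$, and the other three blocks are empty by Remark \ref{rem-lem1-1}. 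This special case gives a concrete template to follow when verifying that no cross-terms have been dropped in the general computation.
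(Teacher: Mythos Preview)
Your proposal is correct and follows essentially the same route as the paper's proof: both extract the three factors $(\xi_1+\xi_2)$, $(\xi_1+\xi_3)$, $(\xi_1+\xi_4)$ successively via the odd-power and difference identities together with the hyperplane relations, splitting off the boundary indices $i=0$ or $j=0$ at each stage to isolate the clean monomial blocks before re-indexing the interior sums. Your $k=1$ sanity check and first-step formula $\Omega_2/(\xi_1+\xi_2)=\sum_{i+j=2k}(-1)^i(\xi_1^i\xi_2^j-\xi_3^i\xi_4^j)$ match the paper exactly.
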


\emph{\textbf{Proof of Lemma \ref{lem-decomp-2}.}}
We divide the analysis into three steps.

\textbf{Step 1. Find $\frac{\Omega_2}{\xi_1+\xi_2}$.} Using $\xi_1+\xi_2+\xi_3+\xi_4=0$, we have
\begin{align}\label{equ-decomp2-1}
\Omega_2 &= \xi_1^{2k+1}+\xi_2^{2k+1}+\xi_3^{2k+1}+\xi_4^{2k+1}\nonumber\\
&=(\xi_1+\xi_2)\sum_{i+j=2k}(-1)^i\xi_1^i\xi_2^j+(\xi_3+\xi_4)\sum_{i+j=2k}(-1)^i\xi_3^i\xi_4^j\nonumber\\
&=(\xi_1+\xi_2)\sum_{i+j=2k}(-1)^i(\xi_1^i\xi_2^j-\xi_3^i\xi_4^j).
\end{align}
From \eqref{equ-decomp2-1}, we obtain
\begin{align}\label{equ-decomp2-2}
\frac{\Omega_2}{\xi_1+\xi_2} =\sum_{i+j=2k}(-1)^i(\xi_1^i\xi_2^j-\xi_3^i\xi_4^j).
\end{align}

\textbf{Step 2. Find $\frac{\Omega_2}{(\xi_1+\xi_2)(\xi_1+\xi_3)}$.} Rewrite \eqref{equ-decomp2-2} as
\begin{align}\label{equ-decomp2-3}
\frac{\Omega_2}{\xi_1+\xi_2} =\sum_{\substack{i+j=2k,\\ i,j\geq 1}}(-1)^i(\xi_1^i\xi_2^j-\xi_3^i\xi_4^j)+\xi_1^{2k}-\xi_3^{2k}+\xi_2^{2k}-\xi_4^{2k}.
\end{align}
On one hand, we have
\begin{align}\label{equ-decomp2-4}
\xi_1^{2k}-\xi_3^{2k}+\xi_2^{2k}-\xi_4^{2k}&=(\xi_1+\xi_3)\sum_{i+j=2k-1}(-1)^i\xi_3^i\xi_1^j +(\xi_2+\xi_4)\sum_{i+j=2k-1}(-1)^i\xi_4^i\xi_2^j\nonumber\\
&=(\xi_1+\xi_3)\sum_{i+j=2k-1}(-1)^i(\xi_3^i\xi_1^j- \xi_4^i\xi_2^j).
\end{align}
On the other hand, we have
\begin{align}\label{equ-decomp2-5}
\sum_{\substack{i+j=2k,\\ i,j\geq 1}}&(-1)^i(\xi_1^i\xi_2^j-\xi_3^i\xi_4^j)= \sum_{\substack{i+j=2k,\\ i,j\geq 1}}(-1)^i\bigg(\xi_1^i(\xi_2^j-(-\xi_4)^j)-\xi_4^j(\xi_3^i-(-\xi_1)^i)\bigg)\nonumber\\
&=\sum_{\substack{i+j=2k,\\ i,j\geq 1}}(-1)^i\bigg(\xi_1^i(\xi_2+\xi_4)\sum_{m+l=j-1}\xi_2^m(-\xi_4)^l - \xi_4^j(\xi_1+\xi_3)\sum_{m+l=i-1}\xi_3^m(-\xi_1)^l  \bigg)\nonumber\\
&= \sum_{\substack{i+j=2k,\\ i,j\geq 1}}(-1)^{i+1}(\xi_1+\xi_3)\bigg(\xi_1^i\sum_{m+l=j-1}\xi_2^m(-\xi_4)^l+ \xi_4^j\sum_{m+l=i-1}\xi_3^m(-\xi_1)^l  \bigg).
\end{align}
Combining \eqref{equ-decomp2-3}-\eqref{equ-decomp2-5} gives that
\begin{equation}\label{equ-decomp2-6}
\begin{split}
\frac{\Omega_2}{(\xi_1+\xi_2)(\xi_1+\xi_3)}=& \sum_{\substack{i+j=2k,\\ i,j\geq 1}}(-1)^{i+1}\bigg(\xi_1^i\sum_{m+l=j-1}\xi_2^m(-\xi_4)^l+ \xi_4^j\sum_{m+l=i-1}\xi_3^m(-\xi_1)^l  \bigg)\\
& +\sum_{i+j=2k-1}(-1)^i(\xi_3^i\xi_1^j- \xi_4^i\xi_2^j).
\end{split}
\end{equation}
Changing variable $i-1\mapsto i, j-1\mapsto j$, we find
\begin{align}\label{equ-decomp2-7}
\sum_{\substack{i+j=2k,\\ i,j\geq 1}}(-1)^{i+1}\xi_1^i\sum_{m+l=j-1}\xi_2^m(-\xi_4)^l = \sum_{i+j=2k-2}(-1)^{i}\xi_1^{i+1}\sum_{m+l=j}\xi_2^m(-\xi_4)^l.
\end{align}
Similarly,
\begin{align}\label{equ-decomp2-8}
\sum_{\substack{i+j=2k,\\ i,j\geq 1}}(-1)^{i+1}\xi_4^j\sum_{m+l=i-1}\xi_3^m(-\xi_1)^l & = \sum_{i+j=2k-2}(-1)^{i}\xi_4^{j+1}\sum_{m+l=i}\xi_3^m(-\xi_1)^l \nonumber\\
&=\sum_{i+j=2k-2}(-1)^{i}\xi_4^{i+1}\sum_{m+l=j}\xi_3^m(-\xi_1)^l.
\end{align}
Inserting \eqref{equ-decomp2-7}-\eqref{equ-decomp2-8} into \eqref{equ-decomp2-6}, we obtain
\begin{equation}\label{equ-decomp2-9}
\begin{split}
\frac{\Omega_2}{(\xi_1+\xi_2)(\xi_1+\xi_3)}=&\sum_{i+j=2k-2}(-1)^{i}\bigg(\xi_1^{i+1}\sum_{m+l=j}\xi_2^m(-\xi_4)^l+\xi_4^{i+1}\sum_{m+l=j}\xi_3^m(-\xi_1)^l\bigg)\\
&\quad +\sum_{i+j=2k-1}(-1)^i(\xi_3^i\xi_1^j- \xi_4^i\xi_2^j).
\end{split}
\end{equation}

\textbf{Step 3. Find $\frac{\Omega_2}{(\xi_1+\xi_2)(\xi_1+\xi_3)(\xi_1+\xi_4)}$.} It suffices to analyze the two terms on the right hand side of \eqref{equ-decomp2-9}. We claim that
\begin{multline}\label{equ-decomp2-10}
\sum_{i+j=2k-1}(-1)^i(\xi_3^i\xi_1^j- \xi_4^i\xi_2^j)=\quad (\xi_1+\xi_4)\sum_{i+j=2k-2}(-1)^i(\xi_1^i\xi_4^j+\xi_2^i\xi_3^j)\\
+(\xi_1+\xi_4)\sum_{\substack{i+j=2k-1, \\ i,j\geq 1}}\bigg((-\xi_3)^i\sum_{m+l=j-1}\xi_1^m(-\xi_4)^l + \xi_4^j\sum_{m+l=i-1}(-\xi_2)^m\xi_3^l\bigg),
\end{multline}

\begin{multline*}
\lefteqn{ \sum_{i+j=2k-2}(-1)^{i}\bigg(\xi_1^{i+1}\sum_{m+l=j}\xi_2^m(-\xi_4)^l+\xi_4^{i+1}\sum_{m+l=j}\xi_3^m(-\xi_1)^l\bigg)}
\end{multline*}
\begin{equation}\label{equ-decomp2-11}
\begin{split}
= & \,\,(\xi_1+\xi_4)\bigg[\sum_{i+j=2k-2}(-1)^i\xi_1^i\xi_4^j+\sum_{\substack{i+j=2k-2, \\ j\geq 1}}(-1)^{i}\sum_{n+h=i}\xi_1^n(-\xi_4)^h\sum_{m+l=j}\xi_2^m(-\xi_4)^l \bigg]\\
& +(\xi_1+\xi_4)\sum_{\substack{i+j=2k-2, \\ j\geq 1}}(-1)^{i+1}\xi_4^{i+1}\bigg[\sum_{m+l=j-1}\Big(\xi_3^m(-\xi_2)^l+\xi_4^m(-\xi_1)^l\Big)\\
& +\sum_{\substack{m+l=j,\\ m,l\geq 1}}\Big((-\xi_1)^l \sum_{n+h=m-1}\xi_3^n(-\xi_2)^h + (-\xi_2)^m\sum_{n+h=l-1}\xi_4^n(-\xi_1)^h\Big) \bigg].
\end{split}
\end{equation}
To prove \eqref{equ-decomp2-10}, we rewrite
\begin{align}\label{equ-decomp2-12}
\sum_{i+j=2k-1}&(-1)^i(\xi_3^i\xi_1^j- \xi_4^i\xi_2^j)= \sum_{i+j=2k-1}\Big((-1)^i\xi_3^i\xi_1^j- (-1)^i\xi_4^i\xi_2^j\Big)\nonumber\\
& \quad  =   \sum_{i+j=2k-1}\Big((-1)^i\xi_3^i\xi_1^j+ (-1)^j\xi_4^i\xi_2^j\Big) = \sum_{i+j=2k-1}\Big((-\xi_3)^i\xi_1^j+ (- \xi_2)^i\xi_4^j\Big) \nonumber\\
&\quad  = \sum_{\substack{i+j=2k-1, \\ i,j\geq 1}}\Big((-\xi_3)^i\xi_1^j+ (- \xi_2)^i\xi_4^j\Big) + \xi_1^{2k-1}+\xi_4^{2k-1}-(\xi_2^{2k-1}+\xi_3^{2k-1}).
\end{align}
On one hand,
\begin{align}\label{equ-decomp2-13}
\xi_1^{2k-1}+\xi_4^{2k-1}-(\xi_2^{2k-1}+\xi_3^{2k-1})&=(\xi_1+\xi_4)\sum_{i+j=2k-2}(-1)^i\xi_1^i\xi_4^j - (\xi_2+\xi_3)\sum_{i+j=2k-2}(-1)^i\xi_2^i\xi_3^j\nonumber\\
&=(\xi_1+\xi_4)\sum_{i+j=2k-2}(-1)^i(\xi_1^i\xi_4^j+\xi_2^i\xi_3^j).
\end{align}
On the other hand,
\begin{align}\label{equ-decomp2-14}
\sum_{\substack{i+j=2k-1, \\ i,j\geq 1}}&(-\xi_3)^i\xi_1^j+ (- \xi_2)^i\xi_4^j= \sum_{\substack{i+j=2k-1, \\ i,j\geq 1}}(-\xi_3)^i(\xi_1^j-(-\xi_4)^j)+ (-\xi_3)^i(-\xi_4)^j+(- \xi_2)^i\xi_4^j\nonumber\\
&=\sum_{\substack{i+j=2k-1, \\ i,j\geq 1}}(-\xi_3)^i(\xi_1^j-(-\xi_4)^j)+\xi_4^j((-\xi_2)^i-\xi_3^i)\nonumber\\
&=\sum_{\substack{i+j=2k-1, \\ i,j\geq 1}}(-\xi_3)^i(\xi_1+\xi_4)\sum_{m+l=j-1}\xi_1^m(-\xi_4)^l-\xi_4^j(\xi_2+\xi_3)\sum_{m+l=i-1}(-\xi_2)^m\xi_3^l\nonumber\\
&=\sum_{\substack{i+j=2k-1, \\ i,j\geq 1}}(\xi_1+\xi_4)\bigg((-\xi_3)^i\sum_{m+l=j-1}\xi_1^m(-\xi_4)^l + \xi_4^j\sum_{m+l=i-1}(-\xi_2)^m\xi_3^l\bigg).
\end{align}
Then combining \eqref{equ-decomp2-12}-\eqref{equ-decomp2-14} gives \eqref{equ-decomp2-10}.

To prove \eqref{equ-decomp2-11}, we split the sum into two cases: $j=0, j\geq 1$. We deal with each case as follows. At first,
\begin{multline}\label{equ-decomp2-15}
\sum_{\substack{i+j=2k-2, \\ j=0}}(-1)^{i}\bigg(\xi_1^{i+1}\sum_{m+l=j}\xi_2^m(-\xi_4)^l+\xi_4^{i+1}\sum_{m+l=j}\xi_3^m(-\xi_1)^l\bigg)\\
=\xi_1^{2k-1}+\xi_4^{2k-1}=(\xi_1+\xi_4)\sum_{i+j=2k-2}(-1)^i\xi_1^i\xi_4^j.
\end{multline}
Second, for the case $j\geq 1$ we have
\begin{align}\label{equ-decomp2-16}
&\sum_{\substack{i+j=2k-2, \\ j\geq 1}}(-1)^{i}\bigg(\xi_1^{i+1}\sum_{m+l=j}\xi_2^m(-\xi_4)^l+\xi_4^{i+1}\sum_{m+l=j}\xi_3^m(-\xi_1)^l\bigg)\nonumber\\
&\quad =\sum_{\substack{i+j=2k-2, \\ j\geq 1}}(-1)^{i}\bigg((\xi_1^{i+1}-(-\xi_4)^{i+1})\sum_{m+l=j}\xi_2^m(-\xi_4)^l+\xi_4^{i+1}\sum_{m+l=j}\xi_3^m(-\xi_1)^l-(-\xi_2)^m\xi_4^l\bigg).
\end{align}
There are two terms on the right hand side of \eqref{equ-decomp2-16}. On one hand,
\begin{multline}\label{equ-decomp2-17}
\sum_{\substack{i+j=2k-2, \\ j\geq 1}}(-1)^{i}(\xi_1^{i+1}-(-\xi_4)^{i+1})\sum_{m+l=j}\xi_2^m(-\xi_4)^l\\ = \sum_{\substack{i+j=2k-2, \\ j\geq 1}}(-1)^{i}(\xi_1+\xi_4)\sum_{n+h=i}\xi_1^n(-\xi_4)^h\sum_{m+l=j}\xi_2^m(-\xi_4)^l.
\end{multline}
On the other hand,
\begin{align}\label{equ-decomp2-18}
\sum_{\substack{i+j=2k-2, \\ j\geq 1}}&(-1)^{i}\xi_4^{i+1}\sum_{m+l=j}\Big(\xi_3^m(-\xi_1)^l-(-\xi_2)^m\xi_4^l\Big)\nonumber\\
&=\sum_{\substack{i+j=2k-2, \\ j\geq 1}}(-1)^{i}\xi_4^{i+1}\bigg(\xi_3^j-(-\xi_2)^j + (-\xi_1)^j-\xi_4^j\bigg)\nonumber\\
& \quad +\sum_{\substack{i+j=2k-2, \\ j\geq 1}}(-1)^{i}\xi_4^{i+1}\sum_{\substack{m+l=j,\\ m,l\geq 1}}\bigg((-\xi_1)^l\Big(\xi_3^m-(-\xi_2)^m\Big)+(-\xi_2)^m\Big((-\xi_1)^l-\xi_4^l\Big)\bigg)\nonumber\\
&=(\xi_1+\xi_4)\sum_{\substack{i+j=2k-2, \\ j\geq 1}}(-1)^{i+1}\xi_4^{i+1}\bigg[\sum_{m+l=j-1}\xi_3^m(-\xi_2)^l+\xi_4^m(-\xi_1)^l\nonumber\\
&\quad +\sum_{\substack{m+l=j,\\ m,l\geq 1}}\Big((-\xi_1)^l \sum_{n+h=m-1}\xi_3^n(-\xi_2)^h + (-\xi_2)^m\sum_{n+h=l-1}\xi_4^n(-\xi_1)^h\Big) \bigg].
\end{align}
Then combining \eqref{equ-decomp2-15}-\eqref{equ-decomp2-18} gives \eqref{equ-decomp2-11}. The desired conclusion follows from \eqref{equ-decomp2-10} and \eqref{equ-decomp2-11}.

\subsection{Estimates for $\beta_4$}\label{subsec-3.3}
In this subsection, we give two upper bounds for $\beta_4$, based on the analysis of $M_4$ in subsection \ref{subsec-3.1} and \ref{subsec-3.2}.
Since $\beta_4 = -M_4/\alpha_4$ (see \eqref{equ-energy-4}), using Lemma \ref{lem-decomp-1} and Lemma \ref{lem-decomp-2}, we obtain
\begin{align}\label{equ-beta4-1}
\beta_4= \frac{c}{108}\sum_{k=0}^\infty \frac{\sigma^{2(k+2)}}{(2(k+2))!}(\Omega_1(k+2) -\Omega_2(k+1)),
\end{align}
where $\Omega_1(k+2):=\Omega_1(k+2; \xi_1,\cdots,\xi_4), \Omega_2(k+1):=\Omega_2(k+1; \xi_1,\cdots,\xi_4)$ are given by
\begin{equation}\label{equ-theta1-0}
\begin{split}
\Omega_1(k+2) & =\sum_{i+j=2k-1}(-1)^i \bigg((\xi_1^{j+1}+\xi_2^{j+1})\sum_{m+l=i}\xi_3^m(\xi_3+\xi_4)^l +(\xi_1^{i+1}-\xi_3^{i+1}) \sum_{m+l=j}\xi_2^m(\xi_2+\xi_4)^l\\
&\quad + (\xi_2^{i+1}-\xi_3^{i+1})\sum_{m+l=j}\xi_1^m(\xi_1+\xi_4)^l +(\xi_1^{i+1}+\xi_2^{i+1})\sum_{\substack{m+l=j}}\xi_4^{m}(\xi_3+\xi_4)^{l}\bigg)\\
&\quad -2 \sum_{i+j=2k}\bigg(\xi_1^i(\xi_1+\xi_4)^j+\xi_2^i(\xi_2+\xi_4)^j+ \xi_3^i(\xi_3+\xi_4)^j +\xi_4^{i}(\xi_3+\xi_4)^j\bigg)\\
& \quad -\sum_{i+j=2k}(-1)^i\bigg(\xi_4^{i}  \sum_{\substack{m+l=j}} \xi_1^{m}(\xi_1+\xi_3)^l +\xi_3^{i}  \sum_{\substack{m+l=j}} \xi_4^{m}(\xi_2+\xi_4)^l\bigg)\\
&\quad -\sum_{i+j=2k}(-1)^i\bigg(\xi_4^{i}  \sum_{\substack{m+l=j}} (\xi_2^{m}+\xi_3^{m})(\xi_2+\xi_3)^l \bigg),
\end{split}
\end{equation}
\begin{equation}\label{equ-theta2-0}
\begin{split}
\Omega_2(k+1)= & \sum_{i+j=2k}(-1)^i(2\xi_1^i\xi_4^j+\xi_2^i\xi_3^j)\\
& + \sum_{\substack{i+j=2k+1, \\ i,j\geq 1}}\bigg((-\xi_3)^i\sum_{m+l=j-1}\xi_1^m(-\xi_4)^l + \xi_4^j\sum_{m+l=i-1}(-\xi_2)^m\xi_3^l\bigg)\\
& +\sum_{\substack{i+j=2k, \\ j\geq 1}}(-1)^{i}\sum_{n+h=i}\xi_1^n(-\xi_4)^h\sum_{m+l=j}\xi_2^m(-\xi_4)^l \\
& +\sum_{\substack{i+j=2k, \\ j\geq 1}}(-1)^{i+1}\xi_4^{i+1}\bigg[\sum_{m+l=j-1}\Big(\xi_3^m(-\xi_2)^l+\xi_4^m(-\xi_1)^l\Big)\\
& +\sum_{\substack{m+l=j,\\ m,l\geq 1}}\Big((-\xi_1)^l \sum_{n+h=m-1}\xi_3^n(-\xi_2)^h + (-\xi_2)^m\sum_{n+h=l-1}\xi_4^n(-\xi_1)^h\Big) \bigg].
\end{split}
\end{equation}

Taking absolute value on both sides of \eqref{equ-theta1-0}, we find
\begin{align}\label{equ-theta1-1}
|\Omega_1(k+2)| & \leq \sum_{i+j=2k} \bigg((|\xi_1|^{j}+|\xi_2|^{j})\sum_{m+l=i}|\xi_3|^m|\xi_3+\xi_4|^l +(|\xi_1|^{i}+|\xi_3|^{i}) \sum_{m+l=j}|\xi_2|^m|\xi_2+\xi_4|^l\nonumber\\
&\quad + (|\xi_2|^{i}+|\xi_3|^{i})\sum_{m+l=j}|\xi_1|^m|\xi_1+\xi_4|^l +(|\xi_1|^{i}+|\xi_2|^{i})\sum_{\substack{m+l=j}}|\xi_4|^{m}|\xi_3+\xi_4|^{l}\bigg)\nonumber\\
&\quad +2 \sum_{i+j=2k}\bigg(|\xi_1|^i|\xi_1+\xi_4|^j+|\xi_2|^i|\xi_2+\xi_4|^j+ |\xi_3|^i|\xi_3+\xi_4|^j +|\xi_4|^{i}|\xi_3+\xi_4|^j\bigg)\nonumber\\
& \quad +\sum_{i+j=2k}\bigg(|\xi_4|^{i}  \sum_{\substack{m+l=j}} |\xi_1|^{m}|\xi_1+\xi_3|^l +|\xi_3|^{i}  \sum_{\substack{m+l=j}} |\xi_4|^{m}|\xi_2+\xi_4|^l\bigg)\nonumber\\
&\quad +\sum_{i+j=2k}\bigg(|\xi_4|^{i}  \sum_{\substack{m+l=j}} (|\xi_2|^{m}+|\xi_3|^{m})|\xi_2+\xi_3|^l \bigg)\nonumber\\
&\leq 2 \sum_{i+j=2k}\bigg(|\xi_1|^i|\xi_1+\xi_4|^j+|\xi_2|^i|\xi_2+\xi_4|^j+ |\xi_3|^i|\xi_3+\xi_4|^j +|\xi_4|^{i}|\xi_3+\xi_4|^j\bigg)\nonumber\\
& \quad + \sum_{i+j=2k} \sum {}^{\prime}|\xi_{p_1}|^i \sum_{m+l=j}|\xi_{p_2}|^m|\xi_{p_2}+\xi_{p_3}|^l.
\end{align}
Here, the sum $\sum{}^\prime$ is taking over all $p_1, p_2, p_3$ being different numbers in the set $\{1,2,3,4\}$.

Taking absolute value on both sides of \eqref{equ-theta2-0}, we find
\begin{equation}\label{equ-theta2-1}
\begin{split}
|\Omega_2(k+1)|\leq & \quad 2\sum_{i+j=2k}(|\xi_1|^i|\xi_4|^j+|\xi_2|^i|\xi_3|^j)+\sum_{i+j=2k}|\xi_3|^i\sum_{m+l=j}|\xi_1|^m|\xi_4|^l \\
& + 2\sum_{i+j=2k} |\xi_4|^j\sum_{m+l=i}|\xi_2|^m|\xi_3|^l+ \sum_{i+j=2k}|\xi_4|^{i}\sum_{m+l=j}|\xi_4|^m|\xi_1|^l\\
& +\sum_{i+j=2k}\sum_{n+h=i}|\xi_1|^n|\xi_4|^h\sum_{m+l=j}|\xi_2|^m|\xi_4|^l \\
& +\sum_{i+j=2k}|\xi_4|^{i}\bigg[\sum_{m+l=j}\Big(|\xi_1|^l \sum_{n+h=m}|\xi_3|^n|\xi_2|^h + |\xi_2|^m\sum_{n+h=l}|\xi_4|^n|\xi_1|^h\Big) \bigg].
\end{split}
\end{equation}

Thanks to \eqref{equ-beta4-1}, \eqref{equ-theta1-1}, \eqref{equ-theta2-1},  we obtain the following lemma.
\begin{lemma}\label{lem-beta4-1}
We have the following bound for $\beta_4$:
\begin{align}\label{equ-beta4-2}
|\beta_4| \leq \frac{|c|}{54}\sum_{k=0}^\infty \frac{\sigma^{k+4}}{(k+4)!}(\Theta_1(k)+\Theta_2(k)),
\end{align}
where $c$ is the constant in Lemma \ref{lem-M4}, and $\Theta_1(k), \Theta_2(k)$ are given by
\begin{equation}\label{equ-theta1-3}
\begin{split}
\,\,\,\,\,\Theta_1(k) = & \sum_{i+j=k}\bigg(|\xi_1|^i|\xi_1+\xi_4|^j+|\xi_2|^i|\xi_2+\xi_4|^j+ |\xi_3|^i|\xi_3+\xi_4|^j +|\xi_4|^{i}|\xi_3+\xi_4|^j\bigg)\\
& + \sum_{i+j=k} \sum {}^\prime|\xi_{p_1}|^i \sum_{m+l=j}|\xi_{p_2}|^m|\xi_{p_2}+\xi_{p_3}|^l,
\end{split}
\end{equation}
\begin{equation}\label{equ-theta2-3}
\begin{split}
\Theta_2(k) = &\sum_{i+j=k}(|\xi_1|^i|\xi_4|^j+|\xi_2|^i|\xi_3|^j)+ \sum_{i+j=k}|\xi_3|^i\sum_{m+l=j}|\xi_1|^m|\xi_4|^l \\
& + \sum_{i+j=k} |\xi_4|^j\sum_{m+l=i}|\xi_2|^m|\xi_3|^l+ \sum_{i+j=k}|\xi_4|^{i}\sum_{m+l=j}|\xi_4|^m|\xi_1|^l\\
& +\sum_{i+j=k}\sum_{n+h=i}|\xi_1|^n|\xi_4|^h\sum_{m+l=j}|\xi_2|^m|\xi_4|^l \\
& +\sum_{i+j=k}|\xi_4|^{i}\bigg[\sum_{m+l=j}\Big(|\xi_1|^l \sum_{n+h=m}|\xi_3|^n|\xi_2|^h + |\xi_2|^m\sum_{n+h=l}|\xi_4|^n|\xi_1|^h\Big) \bigg].
\end{split}
\end{equation}
\end{lemma}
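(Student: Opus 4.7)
The proof is a direct assembly of pieces already constructed in the excerpt. The plan is to start from the series representation \eqref{equ-beta4-1} of $\beta_4$, apply the triangle inequality termwise to obtain
$$|\beta_4| \leq \frac{|c|}{108}\sum_{k=0}^\infty \frac{\sigma^{2k+4}}{(2k+4)!} \bigl( |\Omega_1(k+2)| + |\Omega_2(k+1)| \bigr),$$
and then insert the pointwise bounds \eqref{equ-theta1-1} and \eqref{equ-theta2-1}, which were obtained by taking absolute values term by term in the explicit decompositions from Lemmas \ref{lem-decomp-1} and \ref{lem-decomp-2}.

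Next, I would compare those bounds against the definitions \eqref{equ-theta1-3}--\eqref{equ-theta2-3}. A termwise inspection shows
$$|\Omega_1(k+2)| \leq 2\,\Theta_1(2k), \qquad |\Omega_2(k+1)| \leq 2\,\Theta_2(2k),$$
where the factor $2$ is forced by the explicit coefficient $2$ multiplying the first block of summands in \eqref{equ-theta1-1} (and analogously by the two $2$'s in \eqref{equ-theta2-1}), while every other block in $|\Omega_j|$ matches a block in $\Theta_j$ with coefficient $1$.

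Finally, absorbing the factor $2$ into the constant gives $\tfrac{1}{108}\cdot 2=\tfrac{1}{54}$, and enlarging the index set from the even integers $\{2k\}$ to all nonnegative integers yields \eqref{equ-beta4-2}. This enlargement is legitimate because all summands are nonnegative and, under the reindexation $k'=2k$, one has the identity $\sigma^{2k+4}/(2k+4)! = \sigma^{k'+4}/(k'+4)!$, so each term of the original (even-index) sum reappears unchanged on the right-hand side.

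The step that requires the most care is the termwise comparison of \eqref{equ-theta1-1} with \eqref{equ-theta1-3} (and the analogous one for $\Theta_2$): one must verify that each monomial block in the absolute-value bound for $|\Omega_1(k+2)|$, with $i+j=2k$, is matched by a block in $\Theta_1(2k)$. The presence of the primed sum $\sum{}^{\prime}$ over distinct indices $(p_1,p_2,p_3)$ in $\{1,2,3,4\}$ makes it worth a second pass to confirm that no summand has been dropped. Otherwise the argument is purely algebraic bookkeeping with no analytic input beyond Lemmas \ref{lem-decomp-1}--\ref{lem-decomp-2}.
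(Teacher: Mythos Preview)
Your proposal is correct and follows exactly the approach the paper takes: the paper's own ``proof'' consists of the single sentence ``Thanks to \eqref{equ-beta4-1}, \eqref{equ-theta1-1}, \eqref{equ-theta2-1}, we obtain the following lemma,'' and you have faithfully unpacked what that sentence means---triangle inequality in \eqref{equ-beta4-1}, the termwise comparison $|\Omega_1(k+2)|\le 2\Theta_1(2k)$ and $|\Omega_2(k+1)|\le 2\Theta_2(2k)$, the resulting constant $\tfrac{2}{108}=\tfrac{1}{54}$, and the harmless enlargement of the summation index from even to all nonnegative integers. Your caution about verifying the primed sum $\sum{}^{\prime}$ is well placed but the match is exact, so nothing further is needed.
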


To obtain further estimates of $\beta_4$, we need the following lemma.
\begin{lemma}\label{lem-gamma}
Let $p\geq 4$ be an integer. Assume that $n_1, n_2, \cdots, n_p$ are integers satisfying
$$
0\leq n_1\leq n_2, \cdots, n_{p-1} \leq n_{p}
$$
and
$$
n_1+n_p = n_2 + \cdots + n_{p-1}.
$$
Then we have
\begin{align}\label{equ-gamma-1}
n_2!n_3!\cdots n_{p-1}!\leq n_1!n_p!.
\end{align}
\end{lemma}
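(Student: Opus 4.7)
The plan is to exploit the sum constraint $n_1+n_p=n_2+\cdots+n_{p-1}$ to split the inequality into two pieces: a factor-by-factor comparison, and a multinomial bound. Concretely, I would set $m:=n_2-n_1\ge 0$, so the sum condition rewrites as
$$n_p-m=n_3+n_4+\cdots+n_{p-1}.$$
This identity is the entire heart of the argument; the point is that the ``excess'' of $n_p$ over $(n_3+\cdots+n_{p-1})$ equals exactly the ``excess'' of $n_2$ over $n_1$.

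Next, I would observe that both $n_2!/n_1!$ and $n_p!/(n_p-m)!$ are products of exactly $m$ consecutive integers:
$$\frac{n_2!}{n_1!}=\prod_{i=1}^{m}(n_1+i),\qquad \frac{n_p!}{(n_p-m)!}=\prod_{i=1}^{m}(n_p-m+i).$$
A termwise comparison yields $(n_p-m+i)-(n_1+i)=n_p-n_2\ge 0$, where I use the hypothesis $n_p\ge n_2$. Hence
$$\frac{n_p!}{(n_p-m)!}\ge \frac{n_2!}{n_1!}.$$
Separately, the multinomial coefficient $\binom{n_3+\cdots+n_{p-1}}{n_3,n_4,\ldots,n_{p-1}}$ is a positive integer because each $n_i$ is a non-negative integer, which gives
$$(n_p-m)!=(n_3+n_4+\cdots+n_{p-1})!\ge n_3!\,n_4!\cdots n_{p-1}!.$$

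Multiplying the two displayed inequalities and rearranging yields $n_2!\,n_3!\cdots n_{p-1}!\le n_1!\,n_p!$, which is exactly \eqref{equ-gamma-1}. The entire argument is elementary and I do not anticipate a genuine obstacle; the only ``idea'' to spot is the choice to peel off the factor $m=n_2-n_1$ from $n_2!/n_1!$ and pair it against the top $m$ factors of $n_p!$, leaving the remaining $(n_p-m)!$ to absorb the interior factorials $n_3!\cdots n_{p-1}!$ via the trivial multinomial bound. The hypothesis $p\ge 4$ is used only to ensure that the interior block $\{n_3,\ldots,n_{p-1}\}$ is non-empty (for $p=4$ that block is the single term $n_3$, and the multinomial step becomes the trivial equality $n_3!=n_3!$).
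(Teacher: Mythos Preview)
Your argument is correct. It differs from the paper's proof in a meaningful way. The paper first treats the base case $p=4$ by invoking the log-convexity of the Gamma function: it shows via H\"older's inequality that $f(x)=\ln\Gamma(x)$ is convex, then uses a mean-value argument to deduce $f(b)+f(c)\le f(a)+f(d)$ whenever $a\le b\le c\le d$ and $a+d=b+c$. For $p\ge 5$ the paper collapses $n_3!\cdots n_{p-1}!$ into $(n_3+\cdots+n_{p-1})!$ via repeated use of $m!n!\le(m+n)!$ and then appeals to the $p=4$ case. Your route replaces the analytic step (Gamma log-convexity) by the purely combinatorial termwise comparison of the two rising products $\prod_{i=1}^{m}(n_1+i)$ and $\prod_{i=1}^{m}(n_p-m+i)$, and handles all $p\ge 4$ in one stroke rather than via a base-case reduction. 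Your argument is strictly more elementary; the paper's approach has the minor advantage that the convexity inequality $f(b)+f(c)\le f(a)+f(d)$ extends verbatim to non-integer parameters, which is irrelevant here but explains why one might reach for it.
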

\begin{proof}
Without loss of generality, we assume that $n_2\leq n_3\leq \cdots \leq n_{p-1}$.  We divide the proof into two cases: $p=4$ and $p\geq 5$.

\textbf{The case $p=4$.} Using the relation $n!=\Gamma(n+1)$, where $\Gamma: \mathbb{R}^+ \mapsto \mathbb{R}$ is the standard Gamma function given by
\begin{align}\label{equ-gamma-2}
\Gamma(x)=\int_0^\infty t^{x-1}e^{-t}dt, \quad x>0,
\end{align}
the conclusion \eqref{equ-gamma-1} can be restated as
\begin{align}\label{equ-gamma-3}
\Gamma(n_2+1)\Gamma(n_3+1)\leq\Gamma(n_1+1)\Gamma(n_4+1).
\end{align}
It remains to prove \eqref{equ-gamma-3}. Using H\"{o}lder inequality, we deduce from \eqref{equ-gamma-2} that
\begin{align}\label{equ-gamma-4}
\Gamma(\theta x_1+(1-\theta)x_2)\leq \Gamma(x_1)^\theta\Gamma(x_2)^{1-\theta}, \quad \theta\in [0,1], x_1,x_2>0.
\end{align}
Define $f(x):=\ln \Gamma(x), x>0$. Then \eqref{equ-gamma-4} implies that $f$ is convex. Since $\Gamma$ is smooth, so is $f$. Then $f''(x)\geq 0, x>0$. We claim that
\begin{align}\label{equ-gamma-5}
f(b)+f(c)\leq f(a)+f(d), \quad 0<a\leq b\leq c \leq d,\quad b+c=a+d.
\end{align}
In fact, by mean value theorem we have for some $\eta_1\in [a,b], \eta_2\in [c,d]$
$$
f(b)-f(a)=f'(\eta_1)(b-a), \quad f(d)-f(c)=f'(\eta_2)(d-c).
$$
From this, we use mean value theorem again to find for some $\eta_3\in [\eta_1,\eta_2]$
\begin{align*}
 f(a)+f(d)-(f(b)+f(c)) = (f'(\eta_2)-f'(\eta_1))(b-a)=f''(\eta_3)(\eta_2-\eta_1)(b-a)\geq 0.
\end{align*}
Thus the claim \eqref{equ-gamma-5} follows. Set
$$
a=n_1+1, b=n_2+1, c=n_3+1, d=n_4+1
$$
in \eqref{equ-gamma-5}, we obtain \eqref{equ-gamma-3}.

\textbf{The case $p\geq 5$.} Clearly, we have $m!n!\leq (m+n)!$ for all nonnegative integers $m,n$. Using the fact repeatedly, we find
$$
n_3!\cdots n_{p-1}!\leq (n_3+\cdots+  n_{p-1})!.
$$
Thus the desired conclusion \eqref{equ-gamma-1} holds if one can show
\begin{align*}
n_2!(n_3+\cdots+  n_{p-1})!\leq n_1!n_p!.
\end{align*}
But this follows from the proved case $p=4$.
\end{proof}

We are ready to state our first bound for $\beta_4$.
\begin{lemma}\label{lem-beta4-2}
Let $\beta_4$ be given by \eqref{equ-beta4-1}. Then we have for all $\sigma \geq 0, \xi_1+\xi_2+\xi_3+\xi_4=0$
\begin{align}\label{equ-beta4-3}
|\beta_4| \leq \frac{43|c|}{54}\sigma^4e^{\sigma(|\xi_1|+|\xi_2|+|\xi_3|+|\xi_4|)}.
\end{align}
\end{lemma}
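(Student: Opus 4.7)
\medskip

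The plan is to start from the series representation in Lemma \ref{lem-beta4-1} and compare it termwise with the Taylor expansion of $\sigma^4 e^{\sigma S}$, where $S:=|\xi_1|+|\xi_2|+|\xi_3|+|\xi_4|$. Since
$$\sigma^4 e^{\sigma S}=\sum_{k=0}^\infty \frac{\sigma^{k+4}S^k}{k!},$$
while Lemma \ref{lem-beta4-1} provides
$$|\beta_4|\leq \frac{|c|}{54}\sum_{k=0}^\infty \frac{\sigma^{k+4}}{(k+4)!}\bigl(\Theta_1(k)+\Theta_2(k)\bigr),$$
the desired conclusion reduces to establishing a pointwise inequality of the form
$$\Theta_1(k)+\Theta_2(k)\leq 43\,(k+1)(k+2)(k+3)(k+4)\,S^k \qquad (k\geq 0),$$
after which summation yields the factor $e^{\sigma S}$ and the prefactor $|c|/54$ combines with $43$ to give the stated $43|c|/54$.

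The first step is a uniform pointwise bound on each monomial that appears. Applying the triangle inequality $|\xi_i+\xi_j|\leq |\xi_i|+|\xi_j|$ inside every factor of the form $|\xi_{p_2}+\xi_{p_3}|^l$, every summand in $\Theta_1(k)$ and $\Theta_2(k)$ becomes a nonnegative monomial $|\xi_1|^{a_1}|\xi_2|^{a_2}|\xi_3|^{a_3}|\xi_4|^{a_4}$ with total degree $a_1+a_2+a_3+a_4=k$, and each such monomial is trivially dominated by $S^k$. It remains to count the number of summands produced. A simple inner sum $\sum_{i+j=k}$ contributes $k+1$ terms; a double sum $\sum_{i+j=k}\sum_{m+l=j}$ contributes $\binom{k+2}{2}$ terms; a triple or quadruple sum contributes $\binom{k+3}{3}$ terms. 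Enumerating the contributions in $\Theta_1(k)$ (four single sums plus $24$ double sums indexed by the ordered triples $(p_1,p_2,p_3)$ with distinct entries in $\{1,2,3,4\}$) and in $\Theta_2(k)$ (two single sums, three double sums, and three higher-order sums) yields an explicit polynomial $P(k)$ of degree at most three with
$$\Theta_1(k)+\Theta_2(k)\leq P(k)\,S^k.$$

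To finish, one observes that the ratio
$$\frac{P(k)}{(k+1)(k+2)(k+3)(k+4)}$$
is bounded uniformly in $k\geq 0$, because $\deg P\leq 3$ and the denominator has degree four. A direct (loose) estimate shows that $43$ is a valid upper bound for this ratio, whence
$$\frac{\Theta_1(k)+\Theta_2(k)}{(k+4)!}\leq \frac{43}{k!}\,S^k,$$
and summation in $k$ completes the proof. Lemma \ref{lem-gamma} can be invoked at intermediate stages to control factorial products $a_1!\,a_2!\,a_3!\,a_4!$ that arise if one prefers to replace the crude $S^k$ bound by the multinomial identity
$$|\xi_1|^{a_1}|\xi_2|^{a_2}|\xi_3|^{a_3}|\xi_4|^{a_4}\leq \frac{a_1!\,a_2!\,a_3!\,a_4!}{k!}\,S^k,$$
which sharpens individual contributions and gives extra room for absorbing the constants.

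The main obstacle is purely combinatorial bookkeeping: correctly enumerating every sum type in the lengthy expressions for $\Theta_1(k)$ and $\Theta_2(k)$, tracking the factor $24$ coming from the symbol $\sum'$, and verifying that the total polynomial $P(k)$ indeed satisfies $P(k)\leq 43(k+1)(k+2)(k+3)(k+4)$ for every $k\geq 0$. There is no subtle analytic ingredient beyond the triangle inequality and the elementary comparison $\sum_{k}x^k/(k+4)!\leq e^x/24$.
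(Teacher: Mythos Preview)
Your approach is correct but differs from the paper's. The paper does not bound $\Theta_1(k)+\Theta_2(k)$ termwise against $S^k$; instead it estimates each block of the series $\sum_k \frac{\sigma^{k+4}}{(k+4)!}(\cdot)$ separately, using binomial expansion together with Lemma~\ref{lem-gamma} to convert $\frac{1}{(k+4)!}$ into products of individual factorials, and then recognizes partial exponential series such as $\sigma^4 e^{\sigma(|\xi_1|+|\xi_4|)}$. The individual constants obtained this way add up to $28$ for $\Theta_1$ and $15$ for $\Theta_2$, giving exactly $43$. Your route---majorize every summand by $S^k$, count terms to get a cubic polynomial $P(k)$, and compare with the quartic $(k+1)(k+2)(k+3)(k+4)$---is more elementary (Lemma~\ref{lem-gamma} is never needed) and, if you carry out the count, actually yields $P(k)\le \tfrac{3}{2}(k+1)(k+2)(k+3)(k+4)$, so the constant $43$ is a vast overestimate in your scheme. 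The tradeoff is that the paper's finer estimates (e.g.\ bounding a piece by $\sigma^4 e^{\sigma(|\xi_{p_1}|+|\xi_{p_2}|+|\xi_{p_3}|)}$ rather than $\sigma^4 e^{\sigma S}$) are exactly what is reused in the proof of Lemma~\ref{lem-beta4-3}, where one needs the extra $\sigma$ powers to produce decay factors $(1+|\xi_p|)^{-1}$; your crude $S^k$ bound would not deliver that, so the paper's method is doing double duty. One small point: to make your argument complete you should actually record $P(k)$ and verify the inequality for all $k\ge 0$ (the maximum of the ratio occurs at $k=0$), rather than leaving it as ``a direct (loose) estimate.''
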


\begin{proof}
Thanks to \eqref{equ-beta4-2}, the conclusion \eqref{equ-beta4-3} follows from the following two inequalities:
\begin{align}\label{equ-theta1-4}
\sum_{k=0}^\infty \frac{\sigma^{k+4}}{(k+4)!}\Theta_1(k) \leq 28\sigma^4e^{\sigma(|\xi_1|+|\xi_2|+|\xi_3|+|\xi_4|)},
\end{align}
\begin{align}\label{equ-theta2-4}
\sum_{k=0}^\infty \frac{\sigma^{k+4}}{(k+4)!}\Theta_2(k) \leq 15\sigma^4e^{\sigma(|\xi_1|+|\xi_2|+|\xi_3|+|\xi_4|)}.
\end{align}

\textbf{The proof of \eqref{equ-theta1-4}.} It suffices to show that
\begin{multline}\label{equ-theta1-5}
\sum_{k=0}^\infty \frac{\sigma^{k+4}}{(k+4)!}\sum_{i+j=k}\bigg(|\xi_1|^i|\xi_1+\xi_4|^j+|\xi_2|^i|\xi_2+\xi_4|^j+ |\xi_3|^i|\xi_3+\xi_4|^j +|\xi_4|^{i}|\xi_3+\xi_4|^j\bigg)\\
\leq 4\sigma^4e^{\sigma(|\xi_1|+|\xi_2|+|\xi_3|+|\xi_4|)},
\end{multline}
\begin{align}\label{equ-theta1-6}
\sum_{k=0}^\infty \frac{\sigma^{k+4}}{(k+4)!}\sum_{i+j=k} \sum {}^\prime|\xi_{p_1}|^i \sum_{m+l=j}|\xi_{p_2}|^m|\xi_{p_2}+\xi_{p_3}|^l\leq 24\sigma^4e^{\sigma(|\xi_1|+|\xi_2|+|\xi_3|+|\xi_4|)}.
\end{align}
To prove \eqref{equ-theta1-5}, we first show that
\begin{align}\label{equ-theta1-7}
\sum_{k=0}^\infty \frac{\sigma^{k+4}}{(k+4)!}\sum_{i+j=k}|\xi_1|^i|\xi_1+\xi_4|^j\leq \sigma^4e^{\sigma(|\xi_1|+|\xi_4|)}.
\end{align}
In fact, we expand the left hand side of \eqref{equ-theta1-7} by the binomial theorem to find
\begin{align}\label{equ-theta1-8}
\sum_{k=0}^\infty \frac{\sigma^{k+4}}{(k+4)!}\sum_{i+j=k}|\xi_1|^i|\xi_1+\xi_4|^j\leq\sum_{k=0}^\infty \frac{\sigma^{k+4}}{(k+4)!}\sum_{i+j=k}\sum_{m+l=j}\frac{j!}{m!l!}|\xi_1|^{i+m}|\xi_4|^l.
\end{align}
If $i+j=k, m+l=j$, then Lemma \ref{lem-gamma} gives $\frac{j!}{m!k!}\leq \frac{1}{(m+i)!}$. Thus, we deduce from \eqref{equ-theta1-8} that
\begin{align}\label{equ-theta1-9}
\sum_{k=0}^\infty \frac{\sigma^{k+4}}{(k+4)!}\sum_{i+j=k}|\xi_1|^i|\xi_1+\xi_4|^j\leq\sum_{k=0}^\infty \frac{\sigma^{k+4}}{(k+1)^4}\sum_{i+j=k}\sum_{m+l=j}\frac{|\xi_1|^{i+m}}{(m+i)!}\frac{|\xi_4|^l}{l!}.
\end{align}
Since $\sum_{i+j=k}\sum_{m+l=j}\frac{|\xi_1|^{i+m}}{(m+i)!}\frac{|\xi_4|^l}{l!} \leq (k+1)\sum_{i+j=k}\frac{|\xi_1|^{i}}{i!}\frac{|\xi_4|^j}{j!}$, \eqref{equ-theta1-9} becomes
\begin{align}\label{equ-theta1-9.5}
\sum_{k=0}^\infty \frac{\sigma^{k+4}}{(k+4)!}\sum_{i+j=k}|\xi_1|^i|\xi_1+\xi_4|^j\leq\sum_{k=0}^\infty \frac{\sigma^{k+4}}{(k+1)^3}\sum_{i+j=k}\frac{|\xi_1|^{i}}{i!}\frac{|\xi_4|^j}{j!}\leq \sigma^4e^{\sigma(|\xi_1|+|\xi_4|)}.
\end{align}
This proves \eqref{equ-theta1-7}. Similarly, we have
\begin{multline}\label{equ-theta1-10}
\sum_{k=0}^\infty \frac{\sigma^{k+4}}{(k+4)!}\sum_{i+j=k}\bigg(|\xi_2|^i|\xi_2+\xi_4|^j+ |\xi_3|^i|\xi_3+\xi_4|^j +|\xi_4|^{i}|\xi_3+\xi_4|^j\bigg)\\
\leq \sigma^4\Big(e^{\sigma(|\xi_2|+|\xi_4|)} +  2e^{\sigma(|\xi_3|+|\xi_4|)}\Big).
\end{multline}
Combining \eqref{equ-theta1-7} and \eqref{equ-theta1-10} implies that \eqref{equ-theta1-5}.

To  prove \eqref{equ-theta1-6}, we first show that
\begin{align}\label{equ-theta1-11}
\sum_{k=0}^\infty \frac{\sigma^{k+4}}{(k+4)!}\sum_{i+j=k} |\xi_{1}|^i \sum_{m+l=j}|\xi_{2}|^m|\xi_{2}+\xi_{3}|^l\leq \sigma^4e^{\sigma(|\xi_1|+|\xi_2|+|\xi_3|+|\xi_4|)}.
\end{align}
The idea is similar to that of proving \eqref{equ-theta1-5}. In fact, using Lemma \ref{lem-gamma} we have
\begin{align}\label{equ-theta1-12}
\sum_{k=0}^\infty &\frac{\sigma^{k+4}}{(k+4)!}\sum_{i+j=k} |\xi_{1}|^i \sum_{m+l=j}|\xi_{2}|^m|\xi_{2}+\xi_{3}|^l\nonumber\\
&\leq \sum_{k=0}^\infty \frac{\sigma^{k+4}}{(k+4)!}\sum_{i+j=k} |\xi_{1}|^i \sum_{m+l=j}|\xi_{2}|^m \sum_{n+h=l}\frac{l!}{n!h!}|\xi_{2}|^n|\xi_{3}|^h\nonumber\\
&\leq \sum_{k=0}^\infty \frac{\sigma^{k+4}}{(k+1)^4}\sum_{i+j=k} \frac{|\xi_{1}|^i}{i!} \sum_{m+l=j} \sum_{n+h=l}\frac{|\xi_{2}|^{m+n}}{(m+n)!}\frac{|\xi_{3}|^h}{h!}\nonumber\\
&\leq \sum_{k=0}^\infty \frac{\sigma^{k+4}}{(k+1)^3}\sum_{i+j=k} \frac{|\xi_{1}|^i}{i!} \sum_{m+l=j}\frac{|\xi_{2}|^{m}}{m!}\frac{|\xi_{3}|^l}{l!}\nonumber\\
&\leq \sigma^4e^{\sigma(|\xi_1|+|\xi_2|+|\xi_3|)}.
\end{align}
Clearly, \eqref{equ-theta1-11} follows from \eqref{equ-theta1-12}. Similarly, we have for $p_1,p_2,p_3$ being different numbers in the set $\{1,2,3,4\}$
\begin{align}\label{equ-theta1-13}
\sum_{k=0}^\infty \frac{\sigma^{k+4}}{(k+4)!}\sum_{i+j=k} |\xi_{p_1}|^i \sum_{m+l=j}|\xi_{p_2}|^m|\xi_{p_2}+\xi_{p_3}|^l\leq \sigma^4e^{\sigma(|\xi_1|+|\xi_2|+|\xi_3|+|\xi_4|)}.
\end{align}
Since the number of different choices of $p_1, p_2, p_3$ is $24$, the conclusion \eqref{equ-theta1-6} follows from \eqref{equ-theta1-13}.

\textbf{The proof of \eqref{equ-theta2-4}.} It suffices to show that
\begin{align}\label{equ-theta1-14}
\sum_{k=0}^\infty \frac{\sigma^{k+4}}{(k+4)!}\sum_{i+j=k}(|\xi_1|^i|\xi_4|^j+|\xi_2|^i|\xi_3|^j) \leq 2\sigma^4e^{\sigma(|\xi_1|+|\xi_2|+|\xi_3|+|\xi_4|)},
\end{align}
\begin{multline}\label{equ-theta1-15}
\sum_{k=0}^\infty \frac{\sigma^{k+4}}{(k+4)!}\sum_{i+j=k} \bigg[|\xi_3|^i\sum_{m+l=j}|\xi_1|^m|\xi_4|^l+  |\xi_4|^j\sum_{m+l=i}|\xi_2|^m|\xi_3|^l\bigg]\leq 2\sigma^4e^{\sigma(|\xi_1|+|\xi_2|+|\xi_3|+|\xi_4|)},
\end{multline}
\begin{multline}\label{equ-theta1-16}
\sum_{k=0}^\infty \frac{\sigma^{k+4}}{(k+4)!}\sum_{i+j=k}|\xi_4|^{i}\bigg[\sum_{m+l=j}\Big(|\xi_1|^l \sum_{n+h=m}|\xi_3|^n|\xi_2|^h + |\xi_2|^m\sum_{n+h=l}|\xi_4|^n|\xi_1|^h\Big) \bigg]\\
\leq 2\sigma^4e^{\sigma(|\xi_1|+|\xi_2|+|\xi_3|+|\xi_4|)},
\end{multline}
\begin{align}\label{equ-theta1-16.5}
\sum_{k=0}^\infty \frac{\sigma^{k+4}}{(k+4)!}\sum_{i+j=k}  |\xi_4|^{i}\sum_{m+l=j}|\xi_4|^m|\xi_1|^l\leq \sigma^4e^{\sigma(|\xi_1|+|\xi_2|+|\xi_3|+|\xi_4|)},
\end{align}
\begin{align}\label{equ-theta1-17}
\sum_{k=0}^\infty \frac{\sigma^{k+4}}{(k+4)!}\sum_{i+j=k}\sum_{n+h=i}|\xi_1|^n|\xi_4|^h\sum_{m+l=j}|\xi_2|^m|\xi_4|^l\leq 4\sigma^4e^{\sigma(|\xi_1|+|\xi_2|+|\xi_3|+|\xi_4|)}.
\end{align}
Clearly, we have the following inequalities:
\begin{align}\label{equ-theta1-18}
\frac{1}{k!}\leq \frac{1}{i!}\frac{1}{j!}, \quad i+j=k,
\end{align}
\begin{align}\label{equ-theta1-19}
\frac{1}{k!}\leq \frac{1}{i!}\frac{1}{m!}\frac{1}{l!}, \quad i+j=k, m+l=j,
\end{align}
\begin{align}\label{equ-theta1-20}
\frac{1}{k!}\leq \frac{1}{i!}\frac{1}{l!}\frac{1}{n!}\frac{1}{h!}, \quad i+j=k,m+l=j, n+h=m.
\end{align}
Then the equalities \eqref{equ-theta1-14}-\eqref{equ-theta1-16} follows from \eqref{equ-theta1-18}-\eqref{equ-theta1-20}. The equality \eqref{equ-theta1-16.5} follows from \eqref{equ-theta1-7}.

It remains to prove \eqref{equ-theta1-17}. Indeed, using the elementary inequality $|\xi_1|^n|\xi_4|^h\leq |\xi_1|^i + |\xi_4|^i$ for all $n+h=i$, we find $\sum_{n+h=i}|\xi_1|^n|\xi_4|^h \leq i(|\xi_1|^i + |\xi_4|^i)$. Similarly, $\sum_{m+l=j}|\xi_2|^m|\xi_4|^l \leq j(|\xi_2|^j + |\xi_4|^j)$. Then we deduce that
\begin{align*}
\sum_{k=0}^\infty &\frac{\sigma^{k+4}}{(k+4)!}\sum_{i+j=k}\sum_{n+h=i}|\xi_1|^n|\xi_4|^h\sum_{m+l=j}|\xi_2|^m|\xi_4|^l\nonumber\\
&\leq \sum_{k=0}^\infty \frac{\sigma^{k+4}}{(k+4)!}\sum_{i+j=k}i j(|\xi_1|^i + |\xi_4|^i)(|\xi_2|^j + |\xi_4|^j)\nonumber\\
&\leq \sum_{k=0}^\infty \frac{\sigma^{k+4}}{(k+2)!} \sum_{i+j=k}(|\xi_1|^i|\xi_4|^j + |\xi_4|^i|\xi_2|^j + |\xi_1|^i|\xi_2|^j + |\xi_4|^k)\nonumber\\
&\leq \sum_{k=0}^\infty \sigma^{k+4} \bigg(\frac{1}{k!}|\xi_4|^k+ \sum_{i+j=k}\frac{1}{i!j!}(|\xi_1|^i|\xi_4|^j + |\xi_4|^i|\xi_2|^j + |\xi_1|^i|\xi_2|^j )\bigg)\nonumber\\
&\leq 4\sigma^4e^{\sigma(|\xi_1|+|\xi_2|+|\xi_3|+|\xi_4|)}.
\end{align*}
This proves \eqref{equ-theta1-17}.
\end{proof}

The second bound for $\beta_4$ is given as follows.
\begin{lemma}\label{lem-beta4-3}
Let $\beta_4$ be given by \eqref{equ-beta4-1}. Then we have for all $\sigma \leq 1, \xi_1+\xi_2+\xi_3+\xi_4=0$
\begin{align}\label{equ-beta4-lemma2}
|\beta_4| \leq \frac{|c|}{9}\sum_{p_1\neq p_2, p_1,p_2\in \{1,2,3,4\}}\frac{1}{(1+|\xi_{p_1}|)(1+|\xi_{p_2}|)}e^{\sigma(|\xi_1|+|\xi_2|+|\xi_3|+|\xi_4|)}.
\end{align}
\end{lemma}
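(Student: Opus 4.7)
My plan is to refine the crude bound $\sigma^{k+4}/(k+4)!\leq \sigma^4 \sigma^k/k!$ used in the proof of Lemma \ref{lem-beta4-2} by distributing the factorial $(k+4)!^{-1}$ more finely via Lemma \ref{lem-gamma}. The starting point is again Lemma \ref{lem-beta4-1}, i.e.
\begin{equation*}
|\beta_4|\leq \frac{|c|}{54}\sum_{k=0}^\infty \frac{\sigma^{k+4}}{(k+4)!}(\Theta_1(k)+\Theta_2(k)).
\end{equation*}
The idea is to single out two ``chosen'' frequencies in each monomial and allot them a shifted factorial $(n+2)!$ in the denominator, while the remaining frequencies keep the ordinary $n!$. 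Summation in the chosen variables then produces factors of the form $(1+(\sigma|\eta|)^2)^{-1}$ which, because $\sigma\leq 1$, are dominated by $C\sigma^{-2}(1+|\eta|)^{-2}$; this is precisely the source of the extra quadratic denominator in \eqref{equ-beta4-lemma2}.

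Concretely, the technical core of the argument is the estimate: for $\sigma\leq 1$ and any nonnegative numbers $\eta_1,\eta_2,\mu_1,\mu_2$,
\begin{equation*}
\sum_{k=0}^\infty\frac{\sigma^{k+4}}{(k+4)!}\sum_{i_1+i_2+i_3+i_4=k}|\eta_1|^{i_1}|\eta_2|^{i_2}|\mu_1|^{i_3}|\mu_2|^{i_4}\leq \frac{C\, e^{\sigma(|\eta_1|+|\eta_2|+|\mu_1|+|\mu_2|)}}{(1+|\eta_1|)^2(1+|\eta_2|)^2}.
\end{equation*}
Indeed, Lemma \ref{lem-gamma} yields $(i_1+2)!(i_2+2)!\,i_3!\,i_4!\leq (k+4)!$, so the sum factorizes, and the two ``shifted'' factors are estimated by
\begin{equation*}
\sum_{n\geq 0}\frac{(\sigma|\eta|)^n}{(n+2)!}=\frac{e^{\sigma|\eta|}-1-\sigma|\eta|}{(\sigma|\eta|)^2}\leq \frac{3 e^{\sigma|\eta|}}{2(1+(\sigma|\eta|)^2)}\leq \frac{6 e^{\sigma|\eta|}}{\sigma^2(1+|\eta|)^2},
\end{equation*}
while the two ordinary factors sum to $e^{\sigma|\mu_j|}$. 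The two $\sigma^{-2}$ gains cancel exactly against the $\sigma^4$ extracted from the prefactor.

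I then apply this core estimate term by term to $\Theta_1(k)$ and $\Theta_2(k)$. For a prototypical summand such as $\sum_{i+j=k}|\xi_a|^i|\xi_a+\xi_b|^j$ I take $\eta_1=\xi_a$ and $\eta_2=\xi_a+\xi_b$, and bound the resulting exponential by $e^{\sigma\sum|\xi_i|}$ through $|\xi_a+\xi_b|=|\xi_c+\xi_d|\leq|\xi_c|+|\xi_d|$ on the hyperplane $\xi_1+\xi_2+\xi_3+\xi_4=0$. The ``mixed'' denominator $(1+|\xi_a+\xi_b|)^{-2}$ is converted into decay in an individual frequency via the triangle-inequality identity
\begin{equation*}
(1+|\xi_a|)(1+|\xi_a+\xi_b|)\geq 1+|\xi_a|+|\xi_a+\xi_b|\geq \max(1+|\xi_a|,1+|\xi_b|)\geq \sqrt{(1+|\xi_a|)(1+|\xi_b|)},
\end{equation*}
whose square gives $[(1+|\xi_a|)(1+|\xi_a+\xi_b|)]^2\geq (1+|\xi_a|)(1+|\xi_b|)$; thus each such monomial is dominated by one summand on the right-hand side of \eqref{equ-beta4-lemma2}. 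The remaining terms in $\Theta_1,\Theta_2$ (in particular the more elaborate triple and quadruple products in $\Theta_2$) are handled by the same recipe, each time choosing the two $\eta$-variables so that the denominator, after the triangle inequality, cleans up to a pair $(1+|\xi_{p_1}|)(1+|\xi_{p_2}|)$. The main obstacle I anticipate is purely combinatorial: keeping track of the pair assignments for all monomial types in \eqref{equ-theta1-3}--\eqref{equ-theta2-3} while verifying that the accumulated numerical constant does not exceed $|c|/9$.
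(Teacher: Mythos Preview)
Your factorial-splitting idea via Lemma~\ref{lem-gamma} is sound and genuinely different from the paper's argument. The paper does not refine the factorial at all; instead it recycles the intermediate estimates already obtained in the proof of Lemma~\ref{lem-beta4-2}---for instance \eqref{equ-theta1-7} and \eqref{equ-theta1-12}---which have the shape $\sigma^4 e^{\sigma S}$ with $S$ a sum over only \emph{two or three} of the $|\xi_i|$. It then invokes the elementary inequality $\sigma(1+|\xi|)\leq e^{\sigma|\xi|}$ (valid for $0<\sigma\leq1$) to trade each spare power of $\sigma$ for a missing exponential $e^{\sigma|\xi_j|}$ divided by $(1+|\xi_j|)$; two such trades produce the required denominator, and any remaining power of $\sigma$ is simply discarded. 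This route is short precisely because the earlier lemma already did the work of collapsing the exponent onto a strict subset of the frequencies.

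There is, however, a real oversight in your plan beyond bookkeeping of constants. For the $\sum'$ terms in $\Theta_1$---say $\sum_{i+m+l=k}|\xi_{p_1}|^{i}|\xi_{p_2}|^{m}|\xi_{p_2}+\xi_{p_3}|^{l}$---your core estimate produces the exponent $|\xi_{p_1}|+|\xi_{p_2}|+|\xi_{p_2}+\xi_{p_3}|$, and this is \emph{not} in general bounded by $\sum_i|\xi_i|$ (take $\xi_{p_1}=-11.1,\ \xi_{p_2}=10,\ \xi_{p_3}=1,\ \xi_{p_4}=0.1$, giving $32.1>22.2$). The same failure occurs for the repeated-frequency terms in $\Theta_2$ such as $\sum_{i+m+l=k}|\xi_4|^{i}|\xi_4|^{m}|\xi_1|^{l}$, whose exponent $2|\xi_4|+|\xi_1|$ can also exceed $\sum_i|\xi_i|$. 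To close the argument you must first expand $|\xi_{p_2}+\xi_{p_3}|^{l}$ binomially and merge the duplicated powers via Lemma~\ref{lem-gamma}, exactly as in \eqref{equ-theta1-8}--\eqref{equ-theta1-9.5} and \eqref{equ-theta1-12}, \emph{before} applying your core estimate to the resulting clean sum in the individual $|\xi_{p_i}|$. Once that preprocessing is in place your route goes through (with a larger constant than $|c|/9$, which is harmless for Lemma~\ref{lem-comp}); but the obstacle is not ``purely combinatorial'' as you anticipate---it is the same exponent-control step that drives the proof of Lemma~\ref{lem-beta4-2}, and the fact that those bounds are already available is exactly what makes the paper's approach the quicker path here.
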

\begin{proof}
Thanks to \eqref{equ-beta4-2}, it suffices to show the following two inequalities:
\begin{align}\label{equ-theta1-21}
\sum_{k=0}^\infty \frac{\sigma^{k+4}}{(k+4)!}\Theta_1(k) \leq 3\sum_{p_1\neq p_2, p_1,p_2\in \{1,2,3,4\}}\frac{1}{(1+|\xi_{p_1}|)(1+|\xi_{p_2}|)}e^{\sigma(|\xi_1|+|\xi_2|+|\xi_3|+|\xi_4|)},
\end{align}
\begin{align}\label{equ-theta1-22}
\sum_{k=0}^\infty \frac{\sigma^{k+4}}{(k+4)!}\Theta_2(k) \leq 3\sum_{p_1\neq p_2, p_1,p_2\in \{1,2,3,4\}}\frac{1}{(1+|\xi_{p_1}|)(1+|\xi_{p_2}|)}e^{\sigma(|\xi_1|+|\xi_2|+|\xi_3|+|\xi_4|)}.
\end{align}

\textbf{The proof of \eqref{equ-theta1-21}.} It suffices to prove that
\begin{multline}\label{equ-theta1-23}
\sum_{k=0}^\infty \frac{\sigma^{k+4}}{(k+4)!}\sum_{i+j=k}\bigg(|\xi_1|^i|\xi_1+\xi_4|^j+|\xi_2|^i|\xi_2+\xi_4|^j+ |\xi_3|^i|\xi_3+\xi_4|^j +|\xi_4|^{i}|\xi_3+\xi_4|^j\bigg)\\
\leq \sum_{p_1\neq p_2, p_1,p_2\in \{1,2,3,4\}}\frac{1}{(1+|\xi_{p_1}|)(1+|\xi_{p_2}|)}e^{\sigma(|\xi_1|+|\xi_2|+|\xi_3|+|\xi_4|)},
\end{multline}
\begin{multline}\label{equ-theta1-24}
\sum_{k=0}^\infty \frac{\sigma^{k+4}}{(k+4)!}\sum_{i+j=k} \sum '|\xi_{p_1}|^i \sum_{m+l=j}|\xi_{p_2}|^m|\xi_{p_2}+\xi_{p_3}|^l\\
\leq 2\sum_{p_1\neq p_2, p_1,p_2\in \{1,2,3,4\}}\frac{1}{(1+|\xi_{p_1}|)(1+|\xi_{p_2}|)}e^{\sigma(|\xi_1|+|\xi_2|+|\xi_3|+|\xi_4|)}.
\end{multline}

To prove \eqref{equ-theta1-23}, inserting the inequality $\sigma(1+|\xi|)\leq e^{\sigma|\xi|}$ for $0\leq \sigma \leq 1$ into \eqref{equ-theta1-7}, we find
\begin{align*}
\sum_{k=0}^\infty \frac{\sigma^{k+4}}{(k+4)!}\sum_{i+j=k}|\xi_1|^i|\xi_1+\xi_4|^j\leq \frac{1}{(1+|\xi_2|)(1+|\xi_3|)}e^{\sigma(|\xi_1|+|\xi_2|+|\xi_3|+|\xi_4|)}.
\end{align*}
Similarly, the other terms on the left hand side of \eqref{equ-theta1-23} can be bounded. This proves \eqref{equ-theta1-23}.

To prove \eqref{equ-theta1-24}, it suffices to establish that if $p_1, p_2, p_3, p_4$ is a permutation of $1,2,3,4$, then
\begin{multline}\label{equ-theta1-25}
\sum_{k=0}^\infty \frac{\sigma^{k+4}}{(k+4)!}\sum_{i+j=k} |\xi_{p_1}|^i \sum_{m+l=j}|\xi_{p_2}|^m|\xi_{p_2}+\xi_{p_3}|^l\leq \frac{1}{(1+|\xi_{p_1}|)(1+|\xi_{p_4}|)}e^{\sigma(|\xi_1|+|\xi_2|+|\xi_3|+|\xi_4|)}.
\end{multline}
(Note that the factor $2$ on the right side of \eqref{equ-theta1-24} is needed, if one considers the number of terms for two sums in \eqref{equ-theta1-24}.) In fact, on one hand, thanks to \eqref{equ-theta1-12},
\begin{align}\label{equ-theta1-26}
\sum_{k=0}^\infty \frac{\sigma^{k+4}}{(k+4)!}\sum_{i+j=k} |\xi_{p_1}|^i \sum_{m+l=j}|\xi_{p_2}|^m|\xi_{p_2}+\xi_{p_3}|^l\leq \sigma^4e^{\sigma(|\xi_{p_1}|+|\xi_{p_2}|+|\xi_{p_3}|)}.
\end{align}
On the other hand, similar to the proof of \eqref{equ-theta1-12}
\begin{align}\label{equ-theta1-27}
\sum_{k=0}^\infty \frac{\sigma^{k+4}}{(k+4)!}\sum_{i+j=k} |\xi_{p_1}|^{i+1} \sum_{m+l=j}|\xi_{p_2}|^m|\xi_{p_2}+\xi_{p_3}|^l&\leq \sum_{k=0}^\infty \frac{\sigma^{k+3}}{(k+3)!}\sum_{i+j=k} |\xi_{p_1}|^{i} \sum_{m+l=j}|\xi_{p_2}|^m|\xi_{p_2}+\xi_{p_3}|^l\nonumber\\
&\leq \sigma^3e^{\sigma(|\xi_{p_1}|+|\xi_{p_2}|+|\xi_{p_3}|)}.
\end{align}
Combining \eqref{equ-theta1-26} and \eqref{equ-theta1-27} gives
\begin{align}\label{equ-theta1-28}
(1+|\xi_{p_1}|)\sum_{k=0}^\infty \frac{\sigma^{k+4}}{(k+4)!}\sum_{i+j=k} |\xi_{p_1}|^i \sum_{m+l=j}|\xi_{p_2}|^m|\xi_{p_2}+\xi_{p_3}|^l\leq \sigma^3e^{\sigma(|\xi_{p_1}|+|\xi_{p_2}|+|\xi_{p_3}|)}.
\end{align}
Using $\sigma(1+|\xi|)\leq e^{\sigma|\xi|}$ for $0\leq \sigma \leq 1$ again, the inequality \eqref{equ-theta1-25} follows from \eqref{equ-theta1-28}.

\textbf{The proof of \eqref{equ-theta1-22}.} Using the idea of the proof of \eqref{equ-theta1-21}, we obtain the following estimates:
\begin{multline}\label{equ-theta1-29}
\sum_{k=0}^\infty \frac{\sigma^{k+4}}{(k+4)!}\sum_{i+j=k}(|\xi_1|^i|\xi_4|^j+|\xi_2|^i|\xi_3|^j)\\
 \leq \bigg(\frac{1}{(1+|\xi_1|)(1+|\xi_4|)}+\frac{1}{(1+|\xi_{2}|)(1+|\xi_{3}|)}\bigg)e^{\sigma(|\xi_1|+|\xi_2|+|\xi_3|+|\xi_4|)},
\end{multline}
\begin{multline}\label{equ-theta1-30}
\sum_{k=0}^\infty \frac{\sigma^{k+4}}{(k+4)!}\sum_{i+j=k} \bigg[|\xi_3|^i\sum_{m+l=j}|\xi_1|^m|\xi_4|^l+  |\xi_4|^j\sum_{m+l=i}|\xi_2|^m|\xi_3|^l + |\xi_4|^{i}\sum_{m+l=j}|\xi_4|^m|\xi_1|^l\bigg]\\
\leq \bigg(\frac{2}{(1+|\xi_1|)(1+|\xi_4|)}+\frac{1}{(1+|\xi_{2}|)(1+|\xi_{3}|)}\bigg)e^{\sigma(|\xi_1|+|\xi_2|+|\xi_3|+|\xi_4|)},
\end{multline}
\begin{multline}\label{equ-theta1-31}
\sum_{k=0}^\infty \frac{\sigma^{k+4}}{(k+4)!}\sum_{i+j=k}|\xi_4|^{i}\bigg[\sum_{m+l=j}\Big(|\xi_1|^l \sum_{n+h=m}|\xi_3|^n|\xi_2|^h + |\xi_2|^m\sum_{n+h=l}|\xi_4|^n|\xi_1|^h\Big) \bigg]\\
\leq \bigg(\frac{1}{(1+|\xi_1|)(1+|\xi_4|)}+\frac{1}{(1+|\xi_{2}|)(1+|\xi_{4}|)}\bigg)e^{\sigma(|\xi_1|+|\xi_2|+|\xi_3|+|\xi_4|)},
\end{multline}
\begin{multline}\label{equ-theta1-32}
\sum_{k=0}^\infty \frac{\sigma^{k+4}}{(k+4)!}\sum_{i+j=k}\sum_{n+h=i}|\xi_1|^n|\xi_4|^h\sum_{m+l=j}|\xi_2|^m|\xi_4|^l\\
\leq \left(\frac{2}{(1+|\xi_1|)(1+|\xi_2|)}+\Big(\frac{1}{1+|\xi_1|}+\frac{1}{1+|\xi_2|}\Big)\frac{1}{1+|\xi_4|}\right)e^{\sigma(|\xi_1|+|\xi_2|+|\xi_3|+|\xi_4|)}.
\end{multline}
Combining \eqref{equ-theta1-29}-\eqref{equ-theta1-32} implies \eqref{equ-theta1-22}.
\end{proof}

\section{The analyticity radius for KdV}

In this section, we first shall prove an almost conservation law for the KdV equation \ref{equ-1} in  Gevrey class spaces, based on the upper bounds in the subsection \ref{subsec-3.3}. Then we using the almost conversation law and an iteration argument to prove Theorem \ref{thm1}.

\subsection{Almost conservation law}
Recall that the energy  $E^2_I(t)=\|Iu\|^2_{L^2}$, see \eqref{equ-energy-2}. The following lemma shows that, for every $t\in \mathbb{R}$, the energy $E^4_I(t)$ is comparable to $E^2_I(t)$ if $\|Iu\|_{L^2}$ is small.
\begin{lemma}\label{lem-comp}
Let $I$ be the operator defined with the Fourier symbol $m$ given by \eqref{equ-m}, $0<\sigma\leq 1$. Then there exists an absolute constant $C$ such that  for $t\in \mathbb{R}$
\begin{align}\label{equ-comp-1}
|E^4_I(t)-E^2_I(t)|\leq C(\|Iu\|^3_{L^2}+\|Iu\|^4_{L^2}).
\end{align}
\end{lemma}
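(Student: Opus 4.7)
The starting point is the identity
\[
E^4_I(t) - E^2_I(t) = \Lambda_3(\beta_3) + \Lambda_4(\beta_4),
\]
immediate from the definitions \eqref{equ-energy-3}--\eqref{equ-energy-4}. The plan is to prove the two bounds $|\Lambda_4(\beta_4)|\leq C\|Iu\|_{L^2}^4$ and $|\Lambda_3(\beta_3)|\leq C\|Iu\|_{L^2}^3$ separately.

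The quartic term is tailored for Lemma \ref{lem-beta4-3}. Using $e^{\sigma|\xi_j|}\leq 2m(\xi_j)$ to absorb the exponential weight into $m(\xi_1)m(\xi_2)m(\xi_3)m(\xi_4)$, and substituting $\widehat u(\xi_j)=\widehat{Iu}(\xi_j)/m(\xi_j)$, Parseval on the hyperplane $\xi_1+\xi_2+\xi_3+\xi_4=0$ reduces each summand of the bound on $\beta_4$ to a spatial integral of the form
\[
\int_{\mathbb{R}}(JIu)(x)(JIu)(x)(Iu)(x)(Iu)(x)\,\mathrm dx, \qquad J:=(1+|D|)^{-1},
\]
where the two copies of $JIu$ correspond to the two smoothing factors $(1+|\xi_{p_i}|)^{-1}$. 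Hölder's inequality and the one-dimensional Sobolev embedding $\|Jf\|_{L^\infty}\leq C\|Jf\|_{H^1}\leq C\|f\|_{L^2}$ then deliver the required $C\|Iu\|_{L^2}^4$.

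For the cubic term I would first compute $\beta_3=-M_3/\alpha_3$ explicitly. Since $m$ is even, $\xi_2+\xi_3=-\xi_1$, and $\alpha_3=3i\xi_1\xi_2\xi_3$ on the hyperplane, the symmetrization in \eqref{equ-M3} gives
\[
\beta_3=-\frac{\xi_1 m^2(\xi_1)+\xi_2 m^2(\xi_2)+\xi_3 m^2(\xi_3)}{9\,\xi_1\xi_2\xi_3}.
\]
Writing $m^2(\xi)=(1+\cosh(2\sigma\xi))/2$ and Taylor expanding, the hyperplane condition $\xi_1+\xi_2+\xi_3=0$ kills the $k=0$ term, and the Newton identity $p_3=3\xi_1\xi_2\xi_3$ isolates a leading constant $-\sigma^2/3$. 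For $k\geq 2$, $p_{2k+1}=\xi_1^{2k+1}+\xi_2^{2k+1}+\xi_3^{2k+1}$ factors as $\xi_1\xi_2\xi_3\,Q_{2k-2}(\xi_1,\xi_2,\xi_3)$ with $Q_{2k-2}$ a symmetric polynomial of degree $2k-2$ and coefficient growth at most $C^k$ (again Newton's identities), producing the pointwise estimate $|\beta_3|\leq C\sigma^2 e^{C'\sigma(|\xi_1|+|\xi_2|+|\xi_3|)}$ when $\sigma\leq 1$. The leading constant contributes $\Lambda_3(-\sigma^2/3;u,u,u)=-\tfrac{c\sigma^2}{3}\int u^3\,\mathrm dx$; I would then invoke the 1D Gagliardo--Nirenberg inequality $\|u\|_{L^3}^3\leq C\|u\|_{L^2}^{5/2}\|\partial_x u\|_{L^2}^{1/2}$ together with the elementary bound $|\xi|\leq 2m(\xi)/\sigma$ (which follows from $m(\xi)\geq e^{\sigma|\xi|}/2$ and yields $\|\partial_x u\|_{L^2}\leq 2\sigma^{-1}\|Iu\|_{L^2}$) to control this leading piece by $C\sigma^{3/2}\|Iu\|_{L^2}^3\leq C\|Iu\|_{L^2}^3$ for $\sigma\leq 1$. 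The higher-order tail is handled in the same spirit, with the polynomial growth of $Q_{2k-2}$ absorbed by the weights $m_j$ through an estimate parallel to (but simpler than) Lemma \ref{lem-beta4-3}.

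The main anticipated obstacle is producing, for the higher-order part of $\beta_3$, a decomposition of product type analogous to Lemmas \ref{lem-decomp-1}--\ref{lem-beta4-3}, since a naive bounded-multiplier argument does not give a pure $\|Iu\|_{L^2}^3$ bound in one dimension. Only one derivative of smoothing is needed here (to close an $L^3$ estimate through Gagliardo--Nirenberg), so the bookkeeping should be noticeably lighter than in the quartic case, but the separation of variables still has to be carried out before Parseval can convert the trilinear form into a manageable spatial integral.
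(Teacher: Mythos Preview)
Your treatment of the quartic term is exactly the paper's: invoke Lemma~\ref{lem-beta4-3}, push $e^{\sigma|\xi_j|}\le 2m(\xi_j)$ onto the weights, Parseval, and close with $H^1\hookrightarrow L^\infty$ on the two $J$-smoothed factors.

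For the cubic term your route diverges from the paper's and becomes unnecessarily roundabout. The paper does not split off the $k=1$ term. Instead it writes down, for every $k\ge 1$, the elementary factorization
\[
\xi_1^{2k+1}+\xi_2^{2k+1}+\xi_3^{2k+1}
=\xi_1\xi_2\xi_3\sum_{i+j=2k-2}\Bigl(\xi_3^j\bigl((-\xi_1)^i+(-\xi_2)^i\bigr)+\xi_1^i(-\xi_2)^j\Bigr)
\]
on $\xi_1+\xi_2+\xi_3=0$, sums in $k$, and obtains directly the single pointwise bound
\[
|\beta_3|\le \sum_{i=1}^3\frac{1}{1+|\xi_i|}\,e^{\sigma(|\xi_1|+|\xi_2|+|\xi_3|)}\qquad(0<\sigma\le 1),
\]
with exponent coefficient exactly $1$ (not an unspecified $C'$). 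One smoothing factor is then enough to run the same $L^\infty$--$L^2$--$L^2$ argument as in the quartic case. Your Gagliardo--Nirenberg step for the leading constant $-\sigma^2/3$ is correct, but it is redundant once you have the bound above, and your intermediate estimate $|\beta_3|\le C\sigma^2 e^{C'\sigma\sum|\xi_j|}$ is a dead end by itself: unless you force $C'\le 1$ the exponential weight cannot be reabsorbed into $m(\xi_j)$, and you still need the $(1+|\xi_i|)^{-1}$ gain you postpone to the last paragraph. In short, the ``obstacle'' you anticipate is precisely what the paper settles in one line with the displayed factorization; once you write that identity, the cubic estimate closes immediately without separating $k=1$ or invoking Gagliardo--Nirenberg.
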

\begin{proof}
Since $E^4_I(t)=E^2_I(t)+\Lambda_3(\beta_3; u, u, u) + \Lambda_4(\beta_4; u, u, u, u)$, it suffices to show
\begin{align}\label{equ-comp-2}
|\Lambda_3(\beta_3; u, u, u)|\leq \|Iu\|^3_{L^2},
\end{align}
\begin{align}\label{equ-comp-3}
|\Lambda_4(\beta_4; u, u, u, u)|\leq \|Iu\|^4_{L^2}.
\end{align}
Without loss of generality, we assume that $\widehat{u}$ is nonnegative.

\textbf{Proof of \eqref{equ-comp-3}.} According to Lemma \ref{lem-beta4-3}, using the property of Fourier transform, we find
\begin{align}\label{equ-comp-4}
|\Lambda_4(\beta_4; u, u, u, u)|&\leq \int_{\xi_1+\xi_2+\xi_3+\xi_4=0}\frac{|c|}{9}\sum_{p_1\neq p_2, p_1,p_2\in \{1,2,3,4\}}\frac{1}{(1+|\xi_{p_1}|)(1+|\xi_{p_2}|)}\prod_{i=1}^4 e^{\sigma|\xi_i|}u(\xi_i)\nonumber\\
&\leq \frac{4|c|}{3}\int_{\mathbb{R}}|\mathcal {F}^{-1}(\frac{1}{1+|\xi|})e^{\sigma |\xi|}\widehat{u}(\xi)|^2|\mathcal {F}^{-1}e^{\sigma|\xi|}\widehat{u}(\xi)|^2 dx,
\end{align}
where $\mathcal {F}^{-1}$ denotes the inverse Fourier transform.  Using the Sobolev embedding $H^{1}(\mathbb{R})\hookrightarrow L^\infty(\mathbb{R})$, we derive from \eqref{equ-comp-4} that
\begin{align*}
|\Lambda_4(\beta_4; u, u, u, u)|&\leq  \frac{4|c|}{3}\|\mathcal {F}^{-1}(\frac{1}{1+|\xi|})e^{\sigma |\xi|}\widehat{u}(\xi)\|^2_{L^\infty} \|\mathcal {F}^{-1}e^{\sigma|\xi|}\widehat{u}(\xi)\|^2_{L^2}\\
&\leq  C\|e^{\sigma |\xi|}\widehat{u}(\xi)\|^4_{L^2}\leq 2^4C\|Iu\|^4_{L^2}.
\end{align*}
This proves \eqref{equ-comp-3}.

\textbf{Proof of \eqref{equ-comp-2}.} The idea is similar to \eqref{equ-comp-3}. We only give a sketch. If $1\leq k\in \mathbb{N}$ and $\xi_1+\xi_2+\xi_3=0$, then one can show that
\begin{align}\label{equ-comp-5}
\xi_1^{2k+1}+\xi_2^{2k+1}+\xi_3^{2k+1}=\xi_1\xi_2\xi_3\sum_{i+j=2k-2} \bigg(\xi_3^j\Big((-\xi_1)^i+(-\xi_2)^i \Big) +\xi_1^i(-\xi_2)^j \bigg).
\end{align}
In particular, this gives $\alpha_3=i(\xi_1^3+\xi_2^3+\xi_3^3)=3i\xi_1\xi_2\xi_3$ for $\xi_1+\xi_2+\xi_3=0$. Recall that $\beta_3=i[m(\xi_1)m(\xi_2+\xi_3)\{\xi_2+\xi_3\}]_{sym}/\alpha_3$, by \eqref{equ-taylor} we find
\begin{align}\label{equ-comp-6}
\beta_3=-\frac{1}{9\xi_1\xi_2\xi_3}\sum_{k=1}^\infty \frac{\sigma^{2k}}{(2k)!}(\xi_1^{2k+1}+\xi_2^{2k+1}+\xi_3^{2k+1}).
\end{align}
Combining \eqref{equ-comp-5} and \eqref{equ-comp-6} gives
$$
\beta_3=-\frac{1}{9}\sum_{k=1}^\infty \frac{\sigma^{2k}}{(2k)!}\sum_{i+j=2k-2} \bigg(\xi_3^j\Big((-\xi_1)^i +(-\xi_2)^i\Big) +\xi_1^i(-\xi_2)^j \bigg).
$$
From this, one can show that for $0<\sigma\leq 1$
\begin{align}\label{equ-comp-7}
|\beta_3|\leq \sum_{i=1}^3\frac{1}{1+|\xi_i|}e^{\sigma(|\xi_1|+|\xi_2|+|\xi_3|)}.
\end{align}
Then \eqref{equ-comp-2} follows from \eqref{equ-comp-7}.
\end{proof}

\begin{lemma}\label{lem-app}
Let $I$ be the operator defined with the Fourier symbol $m$ given by \eqref{equ-m}, $0<\sigma\leq 1$. Then for $b\in (\frac{1}{2},\frac{2}{3})$ there exists a constant $C=C(b)$ such that
\begin{align}\label{equ-app-1}
\left|\int_0^\delta \Lambda_5(M_5;u,u,u,u,u)\,\mathrm dt \right|\leq C\sigma^4\|Iu\|^5_{X^{0,b}_\delta}.
\end{align}
\end{lemma}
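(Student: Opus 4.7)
The plan is to combine a pointwise bound on $M_5$ (via Lemma \ref{lem-beta4-2}) with duality in Bourgain spaces and the 4-linear estimate of Lemma \ref{lem1}.

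\textbf{Pointwise bound on $M_5$ and reduction to $Iu$-variables.} From the definition $M_5=-2i[\beta_4(\xi_1,\xi_2,\xi_3,\xi_4+\xi_5)\{\xi_4+\xi_5\}]_{sym}$ and the observation that the 4-tuple $(\xi_1,\xi_2,\xi_3,\xi_4+\xi_5)$ sums to zero whenever $\sum_{i=1}^5\xi_i=0$, Lemma \ref{lem-beta4-2} together with $|\xi_4+\xi_5|\leq|\xi_4|+|\xi_5|$ (used both in the exponent and on the explicit derivative factor) yields, after symmetrization,
$$|M_5(\xi_1,\ldots,\xi_5)|\leq C\sigma^4\Bigl(\sum_{i=1}^5|\xi_i|\Bigr)\exp\Bigl(\sigma\sum_{i=1}^5|\xi_i|\Bigr).$$
Since $e^{\sigma|\xi|}\leq 2m(\xi)$ by \eqref{equ-equivalent}, after rewriting the quintilinear form in the $Iu$-variables, namely $\int_0^\delta\Lambda_5(M_5;u,\ldots,u)\,dt=\int_0^\delta\Lambda_5(N_5;Iu,\ldots,Iu)\,dt$ with $N_5:=M_5/\prod_{i=1}^5m(\xi_i)$, the reduced multiplier satisfies $|N_5|\leq C\sigma^4\sum_{i=1}^5|\xi_i|$. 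In other words, passing to $Iu$ leaves a symmetric multiplier that carries at most one spatial derivative.

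\textbf{Reduction to the 4-linear Gr\"unrock estimate.} Setting $v:=Iu$, the symmetry of the five equal factors allows me to split $N_5$ into five symmetric pieces each pointwise dominated by $C\sigma^4|\xi_j|$ ($j=1,\ldots,5$), each of which places the lone derivative on the $j$th slot. After extending $v$ to $\tilde v$ on $\mathbb{R}^2$ with $\|\tilde v\|_{X^{0,b}}\leq 2\|v\|_{X^{0,b}_\delta}$ and passing to the spatial--temporal representation, the contribution of each such piece reduces to estimating a form of the shape
$$\Bigl|\int_0^\delta\!\int_\mathbb{R}|D|\tilde v\cdot \tilde v^4\,dx\,dt\Bigr|,$$
multiplied by the universal constant $C\sigma^4$.

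\textbf{Duality and Lemma \ref{lem1}.} Pick $b'\in(-\tfrac12,-\tfrac13)$; since $b\in(\tfrac12,\tfrac23)$, in particular $-b'<b$. Self-adjointness of $|D|$ gives $\int|D|\tilde v\cdot\tilde v^4\,dx=\int\tilde v\cdot|D|(\tilde v^4)\,dx$, and the duality pairing between $X^{0,b'}_\delta$ and $X^{0,-b'}_\delta$ then yields
$$\Bigl|\int_0^\delta\!\!\int_\mathbb{R} \tilde v\cdot|D|(\tilde v^4)\,dx\,dt\Bigr|\leq\|\tilde v\|_{X^{0,-b'}_\delta}\,\bigl\||D|(\tilde v^4)\bigr\|_{X^{0,b'}_\delta}.$$
Lemma \ref{lem1} bounds the second factor by $C\|\tilde v\|^4_{X^{0,b}_\delta}$, and the embedding $X^{0,b}_\delta\hookrightarrow X^{0,-b'}_\delta$ (valid because $-b'<b$) bounds the first by $\|\tilde v\|_{X^{0,b}_\delta}$. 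Assembling the constants produces the required inequality $C\sigma^4\|Iu\|^5_{X^{0,b}_\delta}$. The subtlest point, and the main technical obstacle I anticipate, is the middle step: taking the pointwise bound $|N_5|\le C\sigma^4\sum|\xi_i|$ and converting it into an $X^{s,b}$-bound on the 5-linear form requires a careful splitting of $N_5$ into pieces each carrying a single derivative, together with the justification that the remaining $L^\infty$-bounded accompanying symbol does not harm the application of Gr\"unrock's 4-linear estimate; this is a standard but delicate dyadic/Coifman--Meyer type argument in Bourgain spaces, after which the duality step is entirely routine.
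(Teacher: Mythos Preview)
Your outline matches the paper's proof almost step for step: Lemma~\ref{lem-beta4-2} gives $|M_5|\le C\sigma^4(\max_i|\xi_i|)\,e^{\sigma\sum|\xi_i|}$, then Parseval, self-adjointness of $|D|$, the duality pairing (the paper takes $b'=b-1$, which is precisely why $b\in(\tfrac12,\tfrac23)$ is needed), and Lemma~\ref{lem1} finish the job. The ``main technical obstacle'' you anticipate is not actually there: since the $X^{0,b}$ norm depends only on $|\widehat u|$, one may assume $\widehat u\ge 0$ from the outset (as the paper does), and then the pointwise bound on the multiplier passes \emph{directly} to an upper bound on the multilinear form---by symmetry this equals $5C\sigma^4\int_0^\delta\int |D|v\cdot v^4\,dx\,dt$ with no leftover $L^\infty$ symbol, so no Coifman--Meyer argument is required.
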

\begin{proof}
Without loss of generality, we assume that $\widehat{u}$ is nonnegative again.
Recall that (see \eqref{equ-beta4-0})
$$
M_5(\xi_1, \xi_2, \xi_3,\xi_4,\xi_5)=-2i[\beta_4(\xi_1,\xi_2,\xi_3,\xi_4+\xi_5)\{\xi_4+\xi_5\}]_{sym},
$$
using Lemma \ref{lem-beta4-2}, we find
\begin{align}\label{equ-app-2}
|M_5| \leq \frac{86|c|}{27}\sigma^4e^{\sigma(|\xi_1|+|\xi_2|+|\xi_3|+|\xi_4|+|\xi_5|)}\max_{i=1,2,\cdots,5}|\xi_i|.
\end{align}
We first use the bound \eqref{equ-app-2}, and then Parseval identity to obtain that
\begin{align}\label{equ-app-3}
\left|\int_0^\delta \Lambda_5(M_5;u,u,u,u,u)\,\mathrm dt \right| &\leq \frac{86|c|}{27}\sigma^4\int_0^\delta\int_{\xi_1+\xi_2+\cdots+\xi_5=0}\max_{i=1,2,\cdots,5}|\xi_i|\prod_{i=1}^5e^{\sigma|\xi_i|}\widehat{u}(\xi_i) \,\mathrm dt\nonumber\\
&=  \frac{86|c|}{27}\sigma^4\int_0^\delta\int_{\mathbb{R}} |D|e^{\sigma |D|}u \cdot (e^{\sigma |D|}u)^4 \,\mathrm dx \mathrm dt,
\end{align}
where $|D|$ and $e^{\sigma|D|}$ are the Fourier multiplier with symbol $|\xi|$ and $e^{\sigma|\xi|}$, respectively. The integral on right hand side of \eqref{equ-app-3} can be bounded by
\begin{align}\label{equ-app-4}
\int_0^\delta\int_{\mathbb{R}} e^{\sigma |D|}u \cdot |D|(e^{\sigma |D|}u)^4\,\mathrm dx \mathrm dt \leq \|e^{\sigma |D|}u\|_{X_\delta^{0,1-b}}\||D|(e^{\sigma |D|}u)^4\|_{X_\delta^{0,b-1}}
\end{align}
for all $b\in (\frac{1}{2},1)$. Applying Lemma \ref{lem1} with $u_i=e^{\sigma |D|}u, i=1,2,3,4$, $b'=b-1$, we find for $b\in (\frac{1}{2},\frac{2}{3})$
\begin{align}\label{equ-app-5}
\||D|(e^{\sigma |D|}u)^4\|_{X_\delta^{0,b-1}}\leq C\|e^{\sigma |D|}u\|^4_{X_\delta^{0,b}},
\end{align}
where $C$ is a constant depends only on $b$. Note that $1-b<b$ (since $b>\frac{1}{2}$), we have $\|e^{\sigma |D|}u\|_{X_\delta^{0,1-b}}\leq \|e^{\sigma |D|}u\|_{X_\delta^{0,b}}$. Inserting \eqref{equ-app-5} into \eqref{equ-app-4}, we obtain
\begin{align}\label{equ-app-6}
\int_0^\delta\int_{\mathbb{R}} e^{\sigma |D|}u \cdot |D|(e^{\sigma |D|}u)^4 \,\mathrm dx \mathrm dt \leq C\|e^{\sigma |D|}u\|_{X_\delta^{0,b}}^5.
\end{align}
Combining \eqref{equ-app-3} and \eqref{equ-app-6} we get
\begin{align*}
\left|\int_0^\delta \Lambda_5(M_5;u,u,u,u,u)\, \mathrm dt \right| \leq \frac{86|c|}{27}C\sigma^4\|e^{\sigma |D|}u\|_{X_\delta^{0,b}}^5\leq C'\sigma^4\|Iu\|_{X_\delta^{0,b}}^5
\end{align*}
with $C'=\frac{86|c|}{27}C 2^5$. This completes the proof.
\end{proof}

Lemma \ref{lem-app} implies an almost conservation of $E^4_I(t)$ for $t\in [0,\delta]$ when $\sigma$ goes to zero. This together with Lemma \ref{lem-comp} will show that the energy $E^2_I(t)$ is almost conserved.

\begin{corollary}\label{cor-almost}
Let $u\in G_{\delta}^{\sigma,b}$ be the solution of \eqref{equ-1} obtained in Proposition \ref{prop-loc}, $b>\frac{1}{2}$. Assume that $0<\sigma\leq 1$ and  $\|Iu_0\|_{L^2}=\varepsilon_0<1$,  where $I$ is defined by the Fourier symbol $m$ given by \eqref{equ-m}. Then for all $t\in [0,\delta]$
\begin{align}\label{equ-almost-0}
\|Iu(t)\|^2_{L^2}\leq \varepsilon_0^2+\mathcal {O}(\varepsilon_0^3)+C\varepsilon_0^5\sigma^4.
\end{align}
\end{corollary}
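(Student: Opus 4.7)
\textbf{Proof proposal for Corollary \ref{cor-almost}.} The plan is to run the standard $I$-method bookkeeping: compare $E^2_I$ with the higher modified energy $E^4_I$ at the endpoints $0$ and $t$, and control the drift of $E^4_I$ on the interval $[0,\delta]$ via Lemma \ref{lem-app}. The main ingredients are already in place: the identity $\frac{d}{dt}E^4_I(t)=\Lambda_5(M_5)$ from \eqref{equ-beta4-0}, the comparison estimate from Lemma \ref{lem-comp}, the multilinear bound from Lemma \ref{lem-app}, and the $X^{0,b}$ bound on $Iu$ coming from Proposition \ref{prop-loc} together with the norm equivalence \eqref{equ-equivalent}.

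First I would upgrade the local bound \eqref{equ-local-bound} to a bound on $Iu$. Since $\|m(D)\cdot\|_{L^2}$ is equivalent (up to a factor $2$) to $\|\cdot\|_{G^\sigma}$ by \eqref{equ-equivalent}, the analogous equivalence holds for the corresponding Bourgain-type spaces, and hence
\[
\|Iu\|_{X^{0,b}_\delta}\leq 2\|u\|_{G^{\sigma,b}_\delta}\leq 2C\|u_0\|_{G^\sigma}\leq 4C\|Iu_0\|_{L^2}=4C\varepsilon_0.
\]
Moreover, since $b>\tfrac12$, the embedding $X^{0,b}_\delta\hookrightarrow C([-\delta,\delta];L^2)$ yields $\sup_{t\in[0,\delta]}\|Iu(t)\|_{L^2}\lesssim \varepsilon_0$; in particular $\|Iu(t)\|_{L^2}<1$ uniformly in $t\in[0,\delta]$ provided $\varepsilon_0$ is small enough.

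Next I would integrate \eqref{equ-beta4-0} in time and invoke Lemma \ref{lem-app}: for every $t\in[0,\delta]$,
\[
|E^4_I(t)-E^4_I(0)|\leq \left|\int_0^\delta \Lambda_5(M_5;u,u,u,u,u)\,\mathrm dt\right|\leq C\sigma^4\|Iu\|^5_{X^{0,b}_\delta}\leq C'\sigma^4\varepsilon_0^5.
\]
Then I would apply Lemma \ref{lem-comp} at times $0$ and $t$ to trade $E^4_I$ for $E^2_I=\|Iu\|^2_{L^2}$. At $t=0$, this gives
\[
E^4_I(0)=\varepsilon_0^2+\mathcal O(\varepsilon_0^3),
\]
and at a general $t\in[0,\delta]$,
\[
\|Iu(t)\|^2_{L^2}=E^2_I(t)\leq E^4_I(t)+C(\|Iu(t)\|^3_{L^2}+\|Iu(t)\|^4_{L^2}).
\]
Using the uniform bound $\|Iu(t)\|_{L^2}\lesssim\varepsilon_0$ from the first step, the last term is $\mathcal O(\varepsilon_0^3)$, and chaining everything together produces
\[
\|Iu(t)\|^2_{L^2}\leq \varepsilon_0^2+\mathcal O(\varepsilon_0^3)+C\sigma^4\varepsilon_0^5,
\]
which is exactly \eqref{equ-almost-0}.

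I do not expect a serious obstacle: all the hard analysis has been absorbed into Lemma \ref{lem-comp} (the pointwise control of $\beta_3,\beta_4$ from Section 3) and Lemma \ref{lem-app} (the multilinear $X^{s,b}$ estimate of Lemma \ref{lem1}). The only point that requires a small amount of care is the transfer of the $X^{0,b}$ estimate from $u$ to $Iu$ and the use of the time-continuous embedding to pass from the $G^{\sigma,b}_\delta$ bound to a uniform-in-$t$ bound on $\|Iu(t)\|_{L^2}$; both are standard consequences of $b>\tfrac12$ and the norm equivalence \eqref{equ-equivalent}.
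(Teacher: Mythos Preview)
Your proposal is correct and follows essentially the same route as the paper's proof: transfer the local bound \eqref{equ-local-bound} to $\|Iu\|_{X^{0,b}_\delta}\lesssim\varepsilon_0$ via the norm equivalence \eqref{equ-equivalent}, use the embedding $X^{0,b}_\delta\hookrightarrow L^\infty_tL^2_x$ for $b>\tfrac12$ to get $\sup_t\|Iu(t)\|_{L^2}\lesssim\varepsilon_0$, apply Lemma \ref{lem-comp} at $t=0$ and at a general $t$, and bound the drift of $E^4_I$ by Lemma \ref{lem-app}. The only cosmetic discrepancy is that you write $\int_0^\delta$ where $\int_0^t$ is meant; since Lemma \ref{lem-app} applies on any subinterval of $[0,\delta]$ with the same constant, this is harmless.
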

\begin{proof}
Since $\|Iu\|_{X_\delta^{0,b}}$ is comparable with $\|u\|_{G_\delta^{\sigma,b}}$, the bound \eqref{equ-local-bound} implies that $\|Iu\|_{X_\delta^{0,b}}\leq C\|Iu_0\|_{L^2}$ for some constant $C>0$. Using the embedding $X_\delta^{0,b} \hookrightarrow L^\infty_tL^2_x$ when $b>\frac{1}{2}$ and  $\|Iu_0\|_{L^2}=\varepsilon_0<1$, we deduce from Lemma \ref{lem-comp} that
\begin{align}\label{equ-almost-1}
E^4_I(0)=E^2_I(0)+\mathcal {O}(\varepsilon_0^3),
\end{align}
and, moreover, for all $t\in (0,\delta]$
\begin{align}\label{equ-almost-2}
E^4_I(t)=E^2_I(t)+\mathcal {O}(\varepsilon_0^3).
\end{align}
Thanks to Lemma \ref{lem-app}, we find for  all $t\in (0,\delta]$
\begin{align}\label{equ-almost-3}
|E^4_I(t)-E^4_I(0)|\leq C\varepsilon_0^5\sigma^4.
\end{align}
Combining \eqref{equ-almost-1}-\eqref{equ-almost-3} implies the desired inequality \eqref{equ-almost-0}.
\end{proof}

\subsection{The proof of Theorem \ref{thm1}}

Let $u_0\in G^{\sigma_0}$ with some $\sigma_0>0$. We can not use the almost conservation law above directly, since the norm $\|u_0\|_{G^{\sigma_0}}$ may be large. To over the difficulty, we need to make a scaling on the solution. Precisely, for every $\lambda>0$, set
$$
u_\lambda(t,x):= \lambda^{-2}u(\frac{t}{\lambda^3}, \frac{x}{\lambda}).
$$
Clearly, $u_\lambda(t,x)$ is also a solution of the KdV equation \eqref{equ-1} on $[0,\lambda^3T]\times \mathbb{R}$ if $u(t,x)$ is a solution on $[0,T]\times \mathbb{R}$. The spatial Fourier transform has the relation
\begin{align}\label{equ-proof-1}
\widehat{u_\lambda}(t, \xi) = \lambda^{-1}\widehat{u}(\frac{t}{\lambda^3}, \lambda \xi).
\end{align}
In particular, we have $\widehat{u_\lambda}(0, \xi) = \lambda^{-1}\widehat{u_0}(\lambda \xi)$. This implies that for all $\sigma>0$
\begin{align}\label{equ-proof-2}
\|u_\lambda(0,\cdot)\|_{G^\sigma}=\lambda^{-\frac{3}{2}}\|u_0\|_{G^{\frac{\sigma}{\lambda}}}.
\end{align}
For every $\varepsilon_0\in (0,1)$, set
\begin{align}\label{equ-proof-3}
\lambda:=\left(1+\frac{\|u_0\|_{G^{\sigma_0}}}{\varepsilon_0}\right)^{\frac{2}{3}}.
\end{align}
Using the embedding $G^{\sigma}\hookrightarrow G^{\frac{\sigma}{\lambda}}$ since $\lambda\geq 1$, and by \eqref{equ-proof-2} we obtain
\begin{align}\label{equ-proof-4}
\|u_\lambda(0,\cdot)\|_{G^{\sigma_0}}\leq \varepsilon_0.
\end{align}
According to Proposition \ref{prop-loc}, problem \eqref{equ-1} has a unique rescaled solution $u_\lambda(t,x)$ with datum $u_\lambda(0,x)$ on the interval $t\in [0,\delta]$, where
\begin{align}\label{equ-proof-4.5}
\delta = \frac{c_0}{(1+\|u_0\|_{G^{\sigma_0}})^{\frac{1}{\frac{3}{4}-b}}}.
\end{align}
Since $\|Iu_\lambda(0,\cdot)\|_{L^2}\leq \|u_\lambda(0,\cdot)\|_{G^{\sigma_0}}\leq \varepsilon_0$, thanks to Corollary \ref{cor-almost}, we obtain for $t\in [0,\delta]$
\begin{align}\label{equ-proof-4.6}
\|u(t)\|_{G^\sigma}\leq 2\|Iu(t)\|_{L^2}\leq 2\sqrt{\varepsilon_0^2+\mathcal {O}(\varepsilon_0^3)+C\varepsilon_0^5\sigma^4}\leq 4\varepsilon_0
\end{align}
where $\sigma=\min\{1,\sigma_0\}$, $\varepsilon_0$ is chosen small enough. Thus $\|u_\lambda(\delta)\|_{G^\sigma}\leq 4\varepsilon_0$. This allows us to take $u_\lambda(\delta)$ as a new data, by virtue of \eqref{equ-proof-4.5}, to obtain a solution on the interval $[\delta, 2\delta]$. Follow this line, by using the local well posedness result and almost conservation law repeatedly, we shall prove that, for arbitrarily large $T$,
\begin{align}\label{equ-proof-5}
\sup_{t\in[0,T]}\|u_\lambda(t)\|_{G^{\sigma(t)}}\leq 4\varepsilon_0,
\end{align}
with for large $t$
\begin{align}\label{equ-proof-6}
\sigma(t)\geq c|t|^{-\frac{1}{4}}.
\end{align}

Now arbitrarily fixed $T$ large. With a little abuse using of notations, we still denote  $E^j_I(t) (j=2,3,4)$ the energies defined in Subsection 2.2 with $u_\lambda$ in place of $u$. Choose $m\in \mathbb{N}$ such that $T\in [m\delta, (m+1)\delta)$. We shall use induction to show for $k=\{1,2,\cdots,m+1\}$ that
\begin{align}\label{equ-proof-7}
\sup_{t\in [0,k\delta]}|E^4_I(t)-E^4_I(0)|\leq Ck\varepsilon_0^5\sigma^4,
\end{align}
\begin{align}\label{equ-proof-8}
\sup_{t\in[0,k\delta]}\|u_\lambda(t)\|_{G^{\sigma}}\leq 4\varepsilon_0.
\end{align}
In fact, for $k=1$, \eqref{equ-proof-7} and \eqref{equ-proof-8} follows from Corollary \ref{cor-almost} and \eqref{equ-proof-4.6}, respectively. Now assume that \eqref{equ-proof-7} and \eqref{equ-proof-8} hold for some $k\in \{1,2,\cdots,m\}$. Take $u_\lambda(k\delta)$ as a new data, by Proposition \ref{prop-loc}, we obtain a solution $u_\lambda$ on the interval $[k\delta, (k+1)\delta]$, and
\begin{align}\label{equ-proof-9}
\sup_{t\in[k\delta, (k+1)\delta]}\|u_\lambda(t)\|_{G^{\sigma}}\leq 4C\varepsilon_0.
\end{align}
Moreover, we apply Corollary \ref{cor-almost} with $u_\lambda$ on the interval $[k\delta, (k+1)\delta]$ to find
\begin{align}\label{equ-proof-10}
\sup_{t\in [k\delta, (k+1)\delta]}|E^4_I(t)-E^4_I(k\delta)|\leq C\varepsilon_0^5\sigma^4.
\end{align}
Combining \eqref{equ-proof-10} and the induction hypothesis \eqref{equ-proof-7}, we obtain
\begin{align}\label{equ-proof-11}
\sup_{t\in [0, (k+1)\delta]}|E^4_I(t)-E^4_I(0)|\leq C(k+1)\varepsilon_0^5\sigma^4.
\end{align}
This proves \eqref{equ-proof-7} with $k$ replaced by $k+1$. Using Lemma \ref{lem-comp}, we deduce from \eqref{equ-proof-11} that
\begin{align}\label{equ-proof-12}
\sup_{t\in [0, (k+1)\delta]}E^2_I(t)\leq \varepsilon_0^2+\mathcal {O}(\varepsilon_0^3)+ C(k+1)\varepsilon_0^5\sigma^4 = \varepsilon_0^2+\mathcal {O}(\varepsilon_0^3)+ C\varepsilon_0^5
\end{align}
provided that
\begin{align}\label{equ-proof-13}
(k+1)\sigma^4=1.
\end{align}
By \eqref{equ-proof-12}, we can choose $\varepsilon_0$ small enough such that
\begin{align}\label{equ-proof-14}
\sup_{t\in [0, (k+1)\delta]}E^2_I(t)\leq 4\varepsilon_0^2.
\end{align}
It follows from \eqref{equ-proof-14} that
$$
\sup_{t\in[0,(k+1)\delta]}\|u_\lambda(t)\|_{G^{\sigma}}\leq 2\sup_{t\in [0, (k+1)\delta]}\sqrt{E^2_I(t)}\leq 4\varepsilon_0.
$$
This proves \eqref{equ-proof-8} with $k$ replaced by $k+1$.

Since $\varepsilon_0\in (0,1)$, we find the lifespan, of local solution, $\delta\sim 1$. Then it follows from \eqref{equ-proof-13} that
\begin{align}\label{equ-proof-15}
\sigma=(k+1)^{-\frac{1}{4}}\geq \left(\frac{T}{\delta}+1\right)^{-\frac{1}{4}}\geq cT^{-\frac{1}{4}},
\end{align}
where $c$ is an absolute constant. Thus, we have proved \eqref{equ-proof-5} and \eqref{equ-proof-6}.

Now we pass the result of $u_\lambda$ to that of $u$. Thanks to \eqref{equ-proof-1}, we have
\begin{align}\label{equ-proof-16}
\widehat{u}(t, \xi) = \lambda\widehat{u_\lambda}(\lambda^3t, \frac{\xi}{\lambda}).
\end{align}
Fixed $T$ arbitrarily large. It follows from \eqref{equ-proof-5}, \eqref{equ-proof-6} and \eqref{equ-proof-16} that
\begin{align}\label{equ-proof-17}
\sup_{t\in[0,T]}\|u(t)\|_{G^{\sigma}}=\lambda^{\frac{3}{2}}\sup_{t\in[0,T]}\|u_\lambda(t)\|_{G^{\lambda^3\sigma}}\leq 4\lambda^\frac{3}{2}\varepsilon_0
\end{align}
with
\begin{align}\label{equ-proof-18}
\sigma \geq \frac{cT^{-\frac{1}{4}}}{\lambda^3}.
\end{align}
By virtue of \eqref{equ-proof-3}, we deduce from \eqref{equ-proof-17}-\eqref{equ-proof-18} that
$$
\sup_{t\in[0,T]}\|u(t)\|_{G^{\sigma}}\leq 4(1+\|u_0\|_{G^{\sigma_0}})
$$
with
$$
\sigma\geq c'T^{-\frac{1}{4}},
$$
where $c'=\frac{c}{(1+\frac{\|u_0\|_{G^{\sigma_0}}}{\varepsilon_0})^2}$. This completes of the proof.

\medskip

\textbf{Acknowledgment}.
The Project was supported by the National Natural Science Foundation of China under grant No. 11701535, and the Natural Science Fund of Hubei
Province under grant No. 2017CFB142.

\end{document}